\numberwithin{equation}{section}
\newtheorem{Lemma}{Lemma}[section]
\newtheorem{Corollary}[Lemma]{Corollary}
\newtheorem{Hypothesis}[Lemma]{Hypothesis}
\newtheorem{Proposition}[Lemma]{Proposition}
\newtheorem{Theorem}[Lemma]{Theorem}
\newtheorem{lemma}{Lemma}[section]
\newtheorem{proposition}[Lemma]{Proposition}
\newtheorem{remark}[Lemma]{Remark}
\def\Re{\mathop\mathrm{Re}\nolimits}    % real part
\def\Im{\mathop\mathrm{Im}\nolimits}    % imaginary part
\newcommand{\R}{\mathbb{R}}             % reals
\newcommand{\rmd}{\mathrm{d}}           % derivatives
\newcommand{\rme}{\mathrm{e}}           % Euler constant
\newcommand{\rmi}{\mathrm{i}}           % imaginary unit
\newcommand{\rmg}{\mathrm{g}}
\def\cO{\mathcal{O}}
\def\e{\epsilon}
\def\errfn{\text{\rm errfn\ }}
\begin{document}

\title{\vspace{-1in} Nonlinear stability of source defects in the complex Ginzburg-Landau equation}

\author{Margaret Beck\footnotemark[1] \and Toan T. Nguyen\footnotemark[2] \and  Bj\"orn Sandstede\footnotemark[3] \and Kevin Zumbrun\footnotemark[4]}

\date{\today}

\maketitle

\begin{abstract} 

In an appropriate moving coordinate frame, source defects are time-periodic solutions to reaction-diffusion equations that are spatially asymptotic to spatially periodic wave trains whose group velocities point away from the core of the defect. In this paper, we rigorously establish nonlinear stability of spectrally stable source defects in the complex Ginzburg-Landau equation. Due to the outward transport at the far field, localized perturbations may lead to a highly non-localized response even on the linear level. To overcome this, we first investigate in detail the dynamics of the solution to the linearized equation. This allows us to determine an approximate solution that satisfies the full equation up to and including quadratic terms in the nonlinearity. This approximation utilizes the fact that the non-localized phase response, resulting from the embedded zero eigenvalues, can be captured, to leading order, by the nonlinear Burgers equation. The analysis is completed by obtaining detailed estimates for the resolvent kernel and pointwise estimates for the Green's function, which allow one to close a nonlinear iteration scheme. 

\end{abstract}

\renewcommand{\thefootnote}{\fnsymbol{footnote}}

\footnotetext[1]{Department of Mathematics, Boston University,
Boston, MA~02215, USA, and Heriot-Watt University, Edinburgh, EH14 4AS, UK. Email: mabeck@math.bu.edu. Research supported in part by NSF grant DMS-1007450 and a Sloan Fellowship.
} 

\footnotetext[2]{Department of Mathematics, Pennsylvania State University, State College, PA~16803, USA. Email: nguyen@math.psu.edu. Research supported in part by NSF grant DMS-1338643.}

\footnotetext[3]{Division of Applied Mathematics, Brown University, Providence, RI 02912, USA. 
Email: Bjorn\underline{~}Sandstede@Brown.edu. Research supported in part by NSF grant DMS-0907904.
}

\footnotetext[4]{Department of Mathematics,
Indiana University,
Bloomington, IN~47405, USA. Email: kzumbrun@indiana.edu. Research supported in part by NSF grant DMS-0300487.
}

\tableofcontents

\section{Introduction}\label{sec-Intro}

In this paper we study stability of source defect solutions of the complex cubic-quintic Ginzburg-Landau (qCGL) equation
\begin{equation}\label{eqs-qCGLintro}
A_t = (1+i\alpha)A_{xx} + \mu A - (1+i\beta)A|A|^2 +  (\gamma_1 + i \gamma_2 )A |A|^4.  
\end{equation}
Here $A = A(x,t)$ is a complex-valued function, $x\in \mathbb{R}$, $t\ge0$, and $\alpha, \beta, \mu, \gamma_1,$ and $\gamma_2$ are all real constants with $\gamma = \gamma_1 + i \gamma_2$ being small but nonzero. Without loss of generality we assume that $\mu = 1$, which can be achieved by rescaling the above equation. It is shown, for instance in \cite{BekkiNozaki85,PoppStillerAranson95, Doelman96, KapitulaRubin00, Lega01,SandstedeScheel04}, that the qCGL equation exhibits a family of defect solutions known as sources (see equation (\ref{E:source})). We are interested here in establishing nonlinear stability of these solutions, under suitable spectral stability assumptions.

In general, a defect is a solution $u_\mathrm{d}(x,t)$ of a reaction-diffusion equation
\[
u_t = Du_{xx} + f(u), \qquad u: \mathbb{R} \times \mathbb{R}^+ \to \mathbb{R}^n
\]
that is time-periodic in an appropriate moving frame $\xi=x-c_\mathrm{d}t$, where $c_\mathrm{d}$ is the speed of the defect, and spatially asymptotic to wave trains, which have the form $u_\mathrm{wt}(kx-\omega t;k)$ for some profile $u_\mathrm{wt}(\theta;k)$ that is $2\pi$-periodic in $\theta$. Thus, $k$ and $\omega$ represent the spatial wave number and the temporal frequency, respectively, of the wave train. Wave trains typically exist as one-parameter families, where the frequency $\omega=\omega_\mathrm{nl}(k)$ is a function of the wave number $k$. The function $\omega_\mathrm{nl}(k)$ is referred to as the nonlinear dispersion relation, and its domain is typically an open interval. The group velocity $c_\mathrm{g}(k_0)$ of the wave train with wave number $k_0$ is defined as
\[
c_\mathrm{g}(k_0) := \frac{\rmd \omega_\mathrm{nl}}{\rmd k}(k_0).
\]
The group velocity is important as it is the speed with which small localized perturbations of the wave train propagate as functions of time, and we refer to \cite{DoelmanSandstedeScheel09} for a rigorous justification of this. 

Defects have been observed in a wide variety of experiments and reaction-diffusion models and can be classified into several distinct types that have different existence and stability properties \cite{SaarloosHohenberg,Hecke,SandstedeScheel04}. This classification involves the group velocities $c_g^\pm := c_g(k_\pm)$ of the asymptotic wave trains, whose wavenumbers are denoted by $k_\pm$. Sources are defects for which $c_\mathrm{g}^-<c_\mathrm{d}<c_\mathrm{g}^+$, so that perturbations are transported away from the defect core towards infinity. Generically, sources exist for discrete values of the asymptotic wave numbers $k_\pm$, and in this sense they actively select the wave numbers of their asymptotic wave trains. Thus, sources can be thought of as organizing the dynamics in the entire spatial domain; their dynamics are inherently not localized. 

For equation (\ref{eqs-qCGLintro}), the properties of the sources can be determined in some detail. We will focus on standing sources, for which $c_\mathrm{d} = 0$. They have the form
\begin{equation}\label{E:source}
A_\mathrm{source}(x, t) = r(x) \rme^{\rmi \varphi(x) } \rme^{-\rmi \omega_0 t},
\end{equation}
where 
\[
\lim_{x\to\pm\infty} \varphi_x(x)= \pm k_0, \qquad \lim_{x\to\pm\infty} r(x) = \pm r_0(k_0), \qquad \omega_0 = \omega_0(k_0),
\]
where the details of the functions $r, \varphi, r_0$ and $\omega_0$ are described in Lemma~\ref{lem-NBexistence}, below. In order for such solutions to be nonlinearly stable, they must first be spectrally stable, meaning roughly that the linearization about the source must not contain any spectrum in the positive right half plane -- see Hypothesis~\ref{H:spec-stab}, below. Our goal is to prove that, under this hypothesis, the sources are nonlinearly stable.

To determine spectral stability one must locate both the point and the essential spectrum. The essential spectrum is determined by the asymptotic wave trains. As we will see below in \S~\ref{sec-linearization}, there are two parabolic curves of essential spectrum. One is strictly in the left half plane and the other is given by the linear dispersion relation
\[
\lambda_\mathrm{lin}(\kappa) = - i c_\mathrm{g} \kappa  -\mathrm{d} \kappa^2+ \mathcal{O}(\kappa^3)
\]
for small $\kappa\in \mathbb{R}$, where $c_\rmg = 2k_0 (\alpha - \beta_*)$ denotes the group velocity and
\begin{equation}\label{def-betad}
\mathrm{d} := (1+\alpha \beta_*) - \frac{2k_0^2 (1+\beta^2_*)}{r_0^2(1-2\gamma_1 r_0^2)} ,\qquad \beta_* := \frac{\beta - 2 \gamma_2 r_0^2}{1-2\gamma_1 r^2_0}.
\end{equation}
Thus, this second curve touches the imaginary axis at the origin and, if $\mathrm{d}> 0$, then it otherwise lies in the left half plane. In this case, the asymptotic plane waves, and therefore also the essential spectrum, are stable, at least with respect to small wave numbers $k_0$. Otherwise, they are unstable. Throughout the paper, we assume that $\mathrm{d}>0$. 

Determining the location of the point spectrum is more difficult. For all parameter values there are two zero eigenvalues, associated with the eigenfunctions $\partial_x A_\mathrm{source}$ and $\partial_t A_\mathrm{source}$, which correspond to space and time translations, respectively.
When $\gamma_1 = \gamma_2 = 0$, one obtains the cubic Ginzburg-Landau equation (cCGL). In this case, the sources are referred to as Nozaki-Bekki holes, and they are a degenerate family, meaning that they exist for values of the asymptotic wave number in an open interval (if one chooses the wavespeed appropriately), rather than for discrete values of $k_0$. Therefore, in this case there is a third zero eigenvalue associated with this degeneracy. Moreover, in the limit where $\alpha = \beta = \gamma_1 = \gamma_2 = 0$, which is the real Ginzburg Landau (rGL) equation, the sources are unstable. This can be shown roughly using a Sturm-Liouville type argument: in this case, the amplitude is $r(x) = \mathrm{tanh}(x)$ and so $r'(x)$, which corresponds to a zero eigenvalue, has a single zero, which implies the existence of a positive eigenvalue. 

The addition of the quintic term breaks the underlying symmetry to remove the degeneracy \cite{Doelman96} and therefore also one of the zero eigenvalues. To find a spectrally stable source, one needs to find parameter values for which both the unstable eigenvalue (from the rGL limit) and the perturbed zero eigenvalue (from the cCGL limit) become stable. This has been investigated in a variety of previous studies, including \cite{Lega01, PoppStillerAranson95, ChateManneville92, KapitulaRubin00, SandstedeScheel05_absolute, LegaFauve97}. Partial analytical results can be found in \cite{KapitulaRubin00, SandstedeScheel05_absolute}. Numerical and asymptotic evidence in \cite{ChateManneville92, PoppStillerAranson95} suggests that the sources are stable in an open region of parameter space near the NLS limit of (\ref{eqs-qCGLintro}), which corresponds to the limit $|\alpha|, |\beta| \to \infty$ and $\gamma_1, \gamma_2 \to 0$. In the present work, we will assume the parameter values have been chosen so that the sources are spectrally stable. 

The main issue regarding nonlinear stability will be to deal with the effects of the embedded zero eigenvalues. This has been successfully analyzed in a variety of other contexts, most notably viscous conservation laws \cite{ZH, HowardZumbrun06, BeckSandstedeZumbrun10}. Typically, the effect of these neutral modes is studied using an appropriate Ansatz for the form of the solution that involves an initially arbitrary function. That function can subsequently be chosen to cancel any non-decaying components of the resulting perturbation, allowing one to close a nonlinear stability argument. The key difference here is that the effect of these eigenvalues is to cause a nonlocalized response, even if the initial perturbation is exponentially localized. This makes determining the appropriate Ansatz considerably more difficult, as it effectively needs to be based not just on the linearized operator but also on the leading order nonlinear terms. 

The remaining generic defect types are sinks (both group velocities point towards the core), transmission defects (one group velocity points towards the core, the other one away from the core), and contact defects (both group velocities coincide with the defect speed). Spectral stability implies nonlinear stability of sinks \cite[Theorem 6.1]{SandstedeScheel04} and transmission defects \cite{GallaySchneiderUecker2004} in appropriately weighted spaces; the proofs rely heavily on the direction of transport and do not generalize to the case of sources. We are not aware of nonlinear stability results for contact defects, though their spectral stability was investigated in \cite{SandstedeScheel04a}.

We will now state our main result in more detail, in \S~\ref{S:main_results}. Subsequently, 
we will explain in \S~\ref{S:diff_frame} the importance of the result and its relationship to the existing literature. The proof will be contained in sections \S\ref{sec-Prelim}-\S\ref{sec-stability}.

\subsection{Main result: nonlinear stability}\label{S:main_results}
Let $A_\mathrm{source}(x, t)$ be a source solution of the form \eqref{E:source} and let $A(x,t)$ be the solution of \eqref{eqs-qCGLintro} with smooth initial data $A_\mathrm{in}(x)$. In accordance with \eqref{E:source}, we assume that the initial data $A_\mathrm{in}(x)$ is of the form $R_\mathrm{in}(x) e^{i\phi_\mathrm{in}(x)}$ and close to the source solution in the sense that the norm
\begin{equation}\label{in-norm}
\| A_\mathrm{in}(\cdot) - A_\mathrm{source}(\cdot,0)\|_{\mathrm{in}} : = \| e^{x^2/M_0} (R_\mathrm{in} - r) (\cdot)\|_{C^3(\mathbb{R})} +  \| e^{x^2/M_0} (\phi_\mathrm{in} - \varphi)(\cdot)\|_{C^3(\mathbb{R})},
\end{equation} 
where $M_0$ is a fixed positive constant and $\|\cdot \|_{C^3}$ is the usual $C^3$-sup norm, is sufficiently small. The solution $A(x,t)$ will be constructed in the form
\[
A(x + p(x,t),t) = (r(x) + R(x,t))e^{\rmi(\varphi(x) + \phi(x,t))}e^{-\rmi \omega_0 t},
\]
where the function $p(x,t)$ will be chosen so as to remove the non-decaying terms from the perturbation. The initial values of $p(x,0), R(x,0), \phi(x,0)$ can be calculated in terms of the initial data $A_\mathrm{in}(x)$.  

Below we will compute the linearization of \eqref{eqs-qCGLintro} about the source \eqref{E:source} and use this information to choose $p(x,t)$ in a useful way. Furthermore, the linearization and the leading order nonlinear terms will imply that $\phi(x,t) \to \phi^a(x,t)$ as $t \to \infty$, where $\phi^a$ represents the phase modulation caused by the zero eigenvalues. The notation is intended to indicate that $\phi^a$ is an approximate solution to the equation that governs the dynamics of the perturbation $\phi$. In particular, $\phi^a$ is a solution to an appropriate Burgers-type equation that captures the leading order dynamics of $\phi$. (See equation \eqref{intro-Burgers}.) The below analysis will imply that the leading order dynamics of the perturbed source are given by the modulated source
\[
A_\mathrm{mod}(x + p(x,t),t) :=  A_\mathrm{source}(x,t) \rme^{\rmi \phi^a(x,t)}  = r(x) \rme^{\rmi (\varphi(x) + \phi^a(x,t))} \rme^{-\rmi \omega_0 t}. 
\]

The functions $p(x,t)$ and $\phi^a(x,t)$ together will remove from the dynamics any non-decaying or slowly-decaying terms, resulting from the zero eigenvalues and the quadratic terms in the nonlinearity,  thus allowing a nonlinear iteration scheme to be closed. To describe these functions in more detail, we define 
\begin{equation}\label{def-epm}
e(x,t) := \errfn\left(\frac{x+c_\mathrm{g} t}{\sqrt{4\mathrm{d}t}}\right) -  \errfn\left(\frac{x-c_\mathrm{g} t}{\sqrt{4\mathrm{d}t}}\right) ,\qquad \errfn(z):=
{1\over 2\pi } \int_{-\infty}^{z} \rme^{-x^2}dx
\end{equation}
and the Gaussian-like term
\begin{equation}\label{Gaussian-like}
\theta(x,t) := \frac{1}{(1+t)^{1/2}} \left( \rme^{-\frac{(x-c_\rmg t)^2}{M_0(t+1)}} + \rme^{-\frac{(x+c_\rmg t)^2}{M_0(t+1)}}\right),
\end{equation}
where $M_0$ is a fixed positive constant. Now define
\begin{equation}\label{def-trans-phase}
\begin{aligned} 
\phi^a(x,t)  &:= -\frac{\mathrm{d}}{2q} \Big[ \log \Big(1 + \delta^+ (t)e(x,t+1)\Big) + \log \Big(1 + \delta^- (t)e(x,t+1)\Big)\Big],\\
 p(x,t)  &:= \frac {\mathrm{d}}{2qk_0} \Big[ \log \Big(1 + \delta^+ (t)e(x,t+1)\Big) - \log \Big(1 + \delta^- (t)e(x,t+1)\Big)\Big],
 \end{aligned}\end{equation}
where the constant $q$ is defined in \eqref{E:defq} and $\delta^\pm = \delta^\pm (t)$ are smooth functions that will be specified later. Our main result asserts that the shifted solution $A(x+p(x,t),t)$ converges to the modulated source 
with the decay rate of a Gaussian. 

\begin{Theorem}\label{theo-main}
Assume that the initial data is of the form $A_\mathrm{in}(x) =  R_\mathrm{in}(x)e^{i\phi_\mathrm{in}(x)}$ with $R_\mathrm{in},\phi_\mathrm{in}\in C^3(\mathbb{R})$. There exists a positive constant $\epsilon_0$ such that, if 
\begin{equation} \label{E:ass-ic}
\epsilon := \| A_\mathrm{in}(\cdot) - A_\mathrm{source}(\cdot,0)\|_{\mathrm{in}}  \le \epsilon_0,
\end{equation} then the solution $A(x,t)$ to the qCGL equation \eqref{eqs-qCGLintro} exists globally in time. In addition, there are constants $\eta_0,C_0,M_0>0$, $\delta_\infty^\pm \in \mathbb{R}$ with  $|\delta^\pm_\infty|\le \epsilon C_0$, and smooth functions $\delta^\pm(t)$ so that 
\[
|\delta^\pm(t)-\delta^\pm_\infty| \leq \epsilon C_0 \rme^{-\eta_0 t}, \qquad \forall t\ge 0
\] 
and
\begin{equation}\label{conv-Anb}
\Big| \frac{\partial^\ell}{\partial x^\ell}  \Big[ A(x + p(x,t),t) - A_\mathrm{mod}(x,t)\Big] \Big | \leq \epsilon C_0 (1+t)^{\kappa} [(1+t)^{-\ell/2} + e^{-\eta_0|x|}]\theta(x,t) , \qquad \forall x\in \mathbb{R}, \quad \forall t\ge 0, 
\end{equation}
for $\ell = 0,1,2$ and for each fixed $\kappa \in(0,\frac12)$. In particular, $\| A(\cdot + p(\cdot,t),t) - A_\mathrm{mod}(\cdot,t)\|_{W^{2,r}}\to0$ as $t\to\infty$ for each fixed $r>\frac{1}{1-2\kappa}$.
\end{Theorem}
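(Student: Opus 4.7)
The plan is to introduce the modulated perturbation variables $(R,\phi)$ and $p$ via
\[
A(x+p(x,t),t) = (r(x)+R(x,t))\,\rme^{\rmi(\varphi(x)+\phi(x,t))}\rme^{-\rmi\omega_0 t},
\]
substitute into \eqref{eqs-qCGLintro}, and separate real and imaginary parts. This gives a coupled quasilinear system
\[
\partial_t \begin{pmatrix}R\\ \phi\end{pmatrix} = \mathcal{L}\begin{pmatrix}R\\ \phi\end{pmatrix} + \mathcal{N}(R,\phi,\phi_x,\phi_{xx}) + \mathcal{F}(p,p_t,p_x,p_{xx},\phi^a,\phi^a_t,\phi^a_x),
\]
where $\mathcal{L}$ is the linearization about the source (spectrally stable by assumption, with two embedded zero eigenvalues associated to $\partial_x A_\mathrm{source}$ and $\partial_t A_\mathrm{source}$), $\mathcal{N}$ collects the genuinely nonlinear terms, and $\mathcal{F}$ collects all terms generated by the change of variables. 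The point of the Ansatz is that $p$ and $\phi^a$ can be chosen freely, and we will pick them so that $\mathcal{F}$ plus the nonresonant part of $\mathcal{N}$ exactly matches the slowly decaying projections onto the neutral modes of $\mathcal{L}$.

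Next, I would explain the origin of the formulas \eqref{def-trans-phase}. The neutral modes propagate outward at speeds $\pm c_\rmg$ with diffusive spreading rate $\mathrm{d}$, and the leading quadratic nonlinearity in $\phi_x$ turns out to be of the form $-q(\phi_x)^2$. Hence at leading order $\phi^a$ should satisfy a two-fronted viscous Burgers equation, which decouples into two one-sided Burgers equations centered at $\pm c_\rmg t$. Applying the Cole--Hopf substitution $w^\pm = \exp(-q\phi^a_\pm/\mathrm{d})$ converts each into the heat equation, whose self-similar outgoing solutions are precisely $1+\delta^\pm(t)\,e(x,t+1)$, and combining the two fronts in the appropriate way to match the split between phase (symmetric) and shift (antisymmetric) yields exactly \eqref{def-trans-phase}. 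The unknown amplitudes $\delta^\pm(t)$ will then be determined by an infinite-time matching condition that kills the nondecaying projections of the full nonlinear residual onto the two neutral eigenfunctions; concretely, $\delta^\pm(t)$ solves an ODE of the form $\dot\delta^\pm = -\eta_0 \delta^\pm + \mathcal{O}(\epsilon\theta)$, which gives the exponential convergence $\delta^\pm(t)\to\delta^\pm_\infty$.

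With $p,\phi^a$ so chosen, I would write the Duhamel representation
\[
\begin{pmatrix}R(t)\\ \phi(t)-\phi^a(t)\end{pmatrix}
= \int_\R G(x,y,t)\begin{pmatrix}R(y,0)\\ \phi(y,0)-\phi^a(y,0)\end{pmatrix}\rmd y
+ \int_0^t\!\!\int_\R G(x,y,t-s)\,\widetilde{\mathcal{N}}(y,s)\,\rmd y\,\rmd s,
\]
where $G$ is the Green's function of $\mathcal{L}$ with the neutral-mode contributions removed (to be absorbed into $p$ and $\phi^a$), and $\widetilde{\mathcal{N}}$ is the effective nonlinear remainder. The pointwise bounds on $G$ derived from the detailed resolvent-kernel analysis promised in the abstract yield convolution estimates against the Gaussian-like template $\theta(x,t)$. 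Setting up the iteration in the weighted space
\[
\|(R,\phi-\phi^a)\|_X := \sup_{x,t,\,\ell\le 2}\frac{|\partial_x^\ell(R,\phi-\phi^a)|(x,t)}{(1+t)^{\kappa}[(1+t)^{-\ell/2}+\rme^{-\eta_0|x|}]\theta(x,t)},
\]
and using the Gaussian convolution identity $\theta\ast_{x,s}\theta\lesssim (1+t)^{\kappa}\theta$ for any $\kappa>0$, a standard contraction argument closes the nonlinear iteration for $\epsilon$ small and yields \eqref{conv-Anb}. The $W^{2,r}$ statement follows from the $\theta$ pointwise bound once $\kappa<\tfrac12-\tfrac{1}{2r}$.

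The main obstacle, and the reason the above scheme is delicate, is that the phase perturbation $\phi$ itself does \emph{not} decay: it tends to the bounded, nonlocalized function $\phi^a_\infty$. Consequently, quadratic nonlinearities of the form $(\phi_x)^2$, $R\phi_x$, etc.\ cannot be treated perturbatively before the Burgers modulation has been subtracted, and the forcing $\mathcal{F}$ produces contributions supported near the outgoing wavefronts $x=\pm c_\rmg t$ that are only marginally integrable in $t$. This is precisely why the Ansatz must incorporate the nonlinear (Cole--Hopf) structure rather than a purely linear phase shift, and why the two free functions $\delta^\pm(t)$ are needed: they provide the two degrees of freedom required to kill the two neutral-mode projections and thereby render the remainder $(R,\phi-\phi^a)$ Gaussian-decaying at the rate dictated by $\theta$.
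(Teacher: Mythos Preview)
Your overall strategy matches the paper's: Cole--Hopf Ansatz for $(\phi^a,p)$ solving an outgoing Burgers pair, Green's function decomposition separating the two neutral modes, Duhamel representation for the remainder, and a pointwise iteration against the Gaussian template $\theta$. Two points, however, are not as you describe and would break the argument if left as written.

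First, your claimed ODE $\dot\delta^\pm = -\eta_0\delta^\pm + \mathcal{O}(\epsilon\theta)$ is incorrect: there is no intrinsic linear damping in the equations for $\delta^\pm$. In the paper the $\delta^\pm(t)$ are fixed by the integral condition that the projections of the full Duhamel expression onto the nondecaying modes $\mathcal{E}^\pm(x,t)$ vanish, which gives
\[
\dot\delta^\pm(t) \;=\; \frac{2q}{\mathrm{d}}\,(1+\delta^\pm(t))\int_\R \big\langle \Psi^\pm(y),\,\mathcal{N}_2(y,t)\big\rangle\,\rmd y .
\]
The exponential decay of $\dot\delta^\pm$ comes not from a spectral gap in a $\delta$-equation but from the \emph{exponential localization of the adjoint eigenfunctions} $\Psi^\pm(y)=\mathcal{O}(\rme^{-\eta_0|y|})$ paired against forcing terms that are Gaussians centered at $y=\pm c_\mathrm{g}t$: the product $\rme^{-\eta_0|y|}\theta(y,t)$ is $\mathcal{O}(\rme^{-\eta t})$ after integration. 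This is the source mechanism at work (outgoing transport carries the response away from the localized adjoint modes), and it must be invoked explicitly.

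Second, your iteration is set up for $(R,\phi)$, but the phase $\phi$ is singular at the core: the amplitude $r(x)$ of the source vanishes at some point $x_0$, so polar coordinates degenerate there. The paper resolves this by working throughout with $U=(R,\,r\phi)$ as the unknown, deriving all linear and nonlinear estimates for $U$ and its derivatives, and only at the end recovering bounds on $\phi,\phi_x,\phi_{xx}$ near the core by an integration argument using $r_x(x_0)\neq 0$ (see \S\ref{sec-esth2}). Relatedly, the approximate solution is not just $\phi^a$: one must also prescribe an amplitude correction $R^a\sim \hat\phi^a_x+\varphi_x p_x$ (plus a second-order piece) so that the residual in the $R$-equation acquires the extra $(1+t)^{-1/2}$ decay needed to treat $R$ as a derivative-like quantity. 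Without this, the cross-terms $R\phi_x$ in the nonlinearity remain merely quadratic-in-$\theta$ and the convolution estimate you quote does not close.
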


Not only does Theorem~\ref{theo-main} rigorously establish the nonlinear stability of the source solutions of \eqref{eqs-qCGLintro}, but it also provides a rather detailed description of the dynamics of small perturbations. The amplitude of the shifted solution $A(x+p(x,t),t)$ converges to the amplitude of the source $A_\mathrm{souce}(x,t)$ with the decay rate of a Gaussian: $R(x,t) \sim \theta(x,t)$. In addition, the phase dynamics can be understood as follows. If we define
\begin{equation}\label{def-deltap}
\delta_\phi(t)  := -\frac{\mathrm{d} }{2q} \log \Big[ (1 + \delta^+ (t))(1+\delta^-(t)) \Big ] , \qquad \delta_p(t) := \frac{\mathrm{d}}{2qk_0} \log \Big[{ 1 + \delta^+ (t) \over 1 + \delta^- (t)}\Big],
\end{equation}
it then follows from \eqref{def-trans-phase} that 
\begin{equation}\label{shift-est}
\Big |\phi^a(x,t) - \delta_\phi(t) e(x,t+1)\Big | + \Big |p(x,t) - \delta_p(t) e(x,t+1)\Big | \le  \epsilon C_0(1+t)^{1/2}  \theta(x,t).
\end{equation}
The function $e(x,t)$ resembles an expanding plateau of height approximately equal to one that spreads outwards with speed $\pm c_\rmg$, while the associated interfaces widen like $\sqrt{t}$; see Figure~\ref{fig-plateau}. Hence, the phase $\varphi(x) + \phi(x,t)$ tends to $\varphi(x) + \phi^a(x,t)$, where $\phi^a(x,t)$ looks like an expanding plateau as time increases. 

\begin{figure}[h]
\centering\includegraphics[scale=.5]{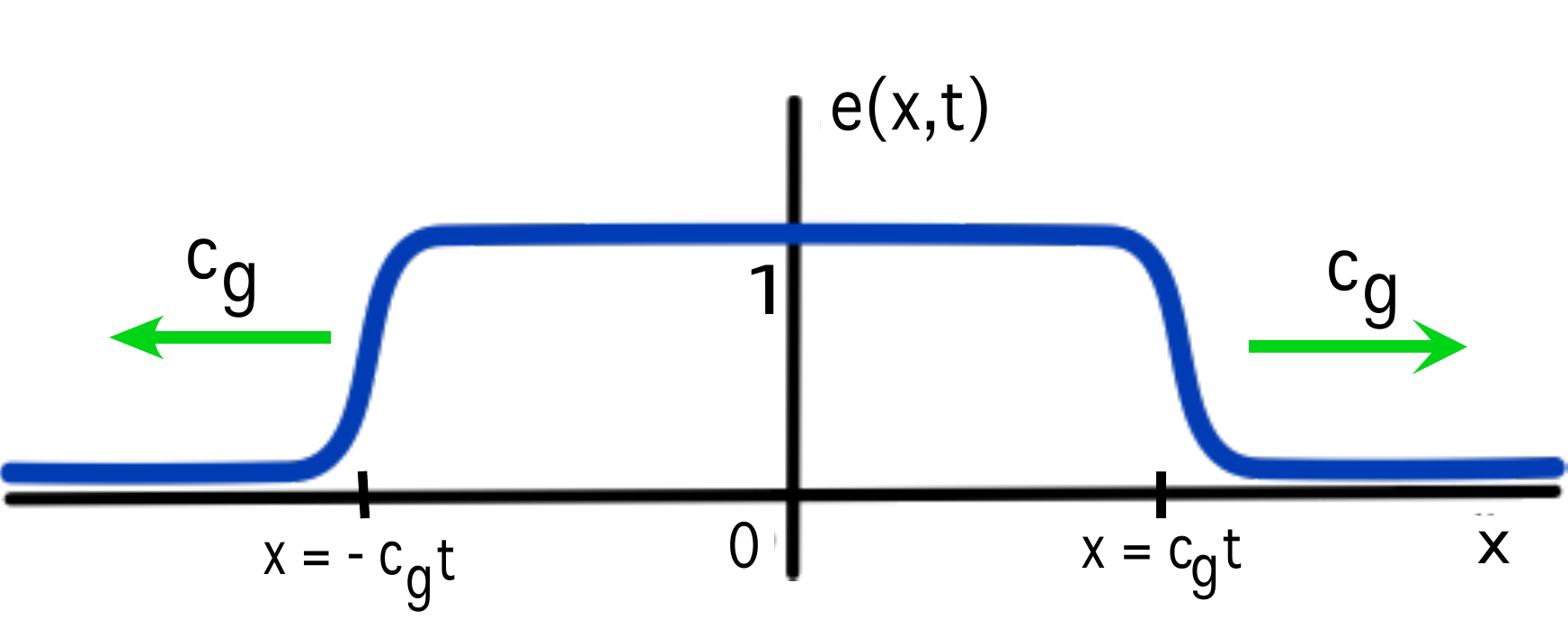} 
\caption{\em Illustration of the graph of $e(x,t)$, the difference of two error functions, for a fixed value of $t$.}
\label{fig-plateau}
\end{figure}

As a direct consequence of Theorem \ref{theo-main}, we obtain the following corollary. 

\begin{Corollary} Let $\eta$ be an arbitrary positive constant and let $V$ be the space-time cone defined by the constraint: $ - (c_\mathrm{g} - \eta )t  \le x \le (c_\mathrm{g}  - \eta) t$. Under the same assumptions as in Theorem \ref{theo-main}, there are positive constants $\eta_1, C_1$ so that the solution $A(x,t)$ to the qCGL equation \eqref{eqs-qCGLintro} satisfies  
$$ |A(x,t) - A_{\mathrm{source}} (x - \delta_p(\infty), t -\delta_\phi(\infty) / \omega_0) | \le \epsilon C_1 e^{-\eta_1 t}$$
for all $(x,t) \in V$, in which $\delta_p$ and $\delta_\phi$ are defined in \eqref{def-deltap}. 
\end{Corollary}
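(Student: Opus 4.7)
The plan is to exploit exponential-in-$t$ decay of several quantities inside the cone $V$ (where $|x \pm c_\mathrm{g} t| \ge \eta t$), and then eliminate the modulation and the spatial shift $p$ to recover a pure translate of the source.

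First, I would establish exponential convergence of the plateau and the modulation inside $V$. A direct estimate gives
\begin{equation*}
\theta(x,t) \le C(1+t)^{-1/2} \rme^{-\eta^2 t^2/(M_0(t+1))} \le C \rme^{-\eta_1 t}
\end{equation*}
for a suitable $\eta_1>0$, and, since each argument of the two errfn's in $e(x,t+1)$ is of order $\sqrt{t}$ with a definite sign, $|e(x,t+1)-1| \le C \rme^{-\eta_1 t}$. Combined with the hypothesis $|\delta^\pm(t)-\delta^\pm_\infty| \le \epsilon C_0 \rme^{-\eta_0 t}$ of Theorem~\ref{theo-main} and the definitions \eqref{def-trans-phase}, \eqref{def-deltap}, this yields
\begin{equation*}
|\phi^a(x,t) - \delta_\phi(\infty)| + |p(x,t) - \delta_p(\infty)| \le \epsilon C \rme^{-\eta_1 t}, \qquad (x,t) \in V,
\end{equation*}
possibly after shrinking $\eta_1$. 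Next, the $\ell = 0$ case of \eqref{conv-Anb} together with the bound on $\theta$ above gives $|A(x+p(x,t),t) - A_\mathrm{mod}(x,t)| \le \epsilon C \rme^{-\eta_1 t}$ on $V$. Writing $A_\mathrm{mod}(x,t) = A_\mathrm{source}(x,t) \rme^{\rmi \phi^a(x,t)}$ and using $|\rme^{\rmi \phi^a} - \rme^{\rmi \delta_\phi(\infty)}| \le |\phi^a - \delta_\phi(\infty)|$, the boundedness of the source profile, and the identity $\rme^{\rmi \delta_\phi(\infty)} \rme^{-\rmi \omega_0 t} = \rme^{-\rmi \omega_0 (t - \delta_\phi(\infty)/\omega_0)}$, one obtains
\begin{equation*}
|A_\mathrm{mod}(x,t) - A_\mathrm{source}(x, t - \delta_\phi(\infty)/\omega_0)| \le \epsilon C \rme^{-\eta_1 t}, \qquad (x,t) \in V.
\end{equation*}

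The last step removes the spatial shift in the argument $x + p(x,t)$. Given a target point $(x^*, t) \in V$, the equation $x + p(x,t) = x^*$ is uniquely solvable for $x$ by the implicit function theorem: $p$ and $p_x$ are uniformly $O(\epsilon)$ (from $|\delta_\infty^\pm|=O(\epsilon)$, boundedness of $e$, the $\ell = 1$ bound in \eqref{conv-Anb}, and direct differentiation of \eqref{def-trans-phase}). For $t$ large the preimage $x$ still lies in a cone of the same form as $V$ but with a slightly smaller aperture, so the previous two estimates apply there; for bounded $t$, the result is immediate from continuous dependence on initial data and the smallness of $\epsilon$. Combining then gives $A(x^*, t) = A_\mathrm{source}(x, t - \delta_\phi(\infty)/\omega_0) + O(\epsilon \rme^{-\eta_1 t})$, and since $|x - (x^* - \delta_p(\infty))| = |p(x,t) - \delta_p(\infty)| \le \epsilon C \rme^{-\eta_1 t}$ by the first step, the uniform $C^1$-regularity of $A_\mathrm{source}(\cdot, s)$ in its spatial variable allows me to replace $x$ by $x^* - \delta_p(\infty)$ at the cost of an additional error of the same order, yielding the claim.

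The only technical point is the change-of-variables step — verifying that $x \mapsto x + p(x,t)$ is a diffeomorphism of $\mathbb{R}$ uniformly in $t$ and that the preimage of $V$ is again a cone of the prescribed form. This is routine given the uniform smallness of $p$ and $p_x$ established above and encoded in \eqref{shift-est}. Everything else is a direct substitution of the exponential-in-$t$ bounds obtained in the first step into the estimates provided by Theorem~\ref{theo-main}.
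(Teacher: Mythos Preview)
Your proposal is correct and follows essentially the same approach as the paper's own proof: both hinge on the observation that, inside the cone $V$, one has $|e(x,t+1)-1|+\theta(x,t)\le C_1 e^{-\eta_1 t}$, from which \eqref{shift-est} gives exponential convergence of $p$ and $\phi^a$ to the constants $\delta_p(\infty)$ and $\delta_\phi(\infty)$, and then Theorem~\ref{theo-main} finishes the job. The paper's proof is extremely terse (three sentences) and simply asserts that ``the main theorem thus yields the corollary at once''; you have spelled out in detail the change-of-variables step (inverting $x\mapsto x+p(x,t)$ and then replacing $x$ by $x^*-\delta_p(\infty)$ via $C^1$-regularity of the source profile) that the paper leaves implicit.
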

\begin{proof} Indeed, within the cone $V$, we have 
$$|e(x,t+1) - 1| + \theta(x,t) \le C_1 e^{-\eta_1 t}$$
for some constants $\eta_1, C_1>0$. The estimate \eqref{shift-est} shows that $p(x,t)$ and $\phi^a(x,t)$ are constants up to an error of order $e^{-\eta_1 t}$. 
The main theorem thus yields the corollary at once.
\end{proof}

As will be seen in the proof of Theorem \ref{theo-main}, the functions $\delta^\pm(t)$ will be constructed via integral formulas that are introduced to precisely capture the non-decaying part of the Green's function of the linearized operator.  The choices of $p(x,t)$ and $\phi^a(x,t)$ are made based on the fact that the asymptotic dynamics of the translation and phase variables is governed (to leading order) by a nonlinear Burgers-type equation:
\begin{equation}\label{intro-Burgers}\Big( \partial_t +\frac{c_\mathrm{g}}{k_0} \varphi_x \partial_x- \mathrm{d} \partial_x^2 \Big) (\phi^a \pm k_0 p) = q (\partial_x \phi^a \pm k_0 \partial_x p)^2,\end{equation}
where $q$ is defined in \eqref{E:defq}. See Section~\ref{sec-Asymptotic}. The formulas \eqref{def-trans-phase} are related to an application of the Cole-Hopf transformation to the above equation.

%%%%%%%%%%%%%%%%%%%%

\subsection{Difficulties and a framework} \label{S:diff_frame}

In the proof, we will have to overcome two difficulties. The first, the presence of the embedded zero eigenvalues, can be dealt with using the now standard, but nontrivial, technique first developed in \cite{ZH}. Roughly speaking, this technique involves the introduction of an initially arbitrary function into the perturbation Ansatz, which is later chosen to cancel with the nondecaying parts of the Green's function that result from the zero eigenvalues. The second difficulty is dealing with the quadratic order nonlinearity. 

To illustrate this second difficulty, for the moment ignore the issue of the zero eigenvalues.
Suppose we were to linearize equation \eqref{eqs-qCGLintro} in the standard way and set
\[
A(x,t) = A_\mathrm{source}(x,t) + \tilde A(x,t),
\]
with the hope of proving that the perturbation, $\tilde A(x,t)$, decays. The function $\tilde A(x,t)$ would then satisfy an equation of the form 
\[
(\partial_t  - \mathcal{L})\tilde A = Q(\tilde A),
\]
where $\mathcal{L}$ denotes the linearized operator, with the highest order derivatives being given by $(1+i\alpha)\partial_x^2$, and $Q(\tilde A) = \cO(|\tilde A|^2)$ denotes the nonlinearity, which contains quadratic terms. Since the temporal Green's function (also known as the fundamental solution) for the heat operator is the Gaussian $t^{-1/2} e^{-|x-y|^2/4t}$ centered at $x=y$, the Green's function of $\partial_t - \mathcal{L}$ at best behaves like a Gaussian centered at $x = y \pm c_\rmg t$. (In fact it is much worse, once we take into account the effects of the embedded zero eigenvalues.) Quadratic terms can have a nontrivial and subtle effect on the dynamics of such an equation: consider, for example, $u_t = u_{xx} - u^2$. The zero solution is stable with respect to positive initial data, but is in general unstable. For such situations, standard techniques for studying stability are often not effective. In particular, the nonlinear iteration procedure that is typically used in conjunction with pointwise Green's function estimates does not work when quadratic terms are present (unless they have a special conservative structure). This is because the convolution of a Gaussian (the Green's function) against a quadratic function of another Gaussian, $Q(\tilde A)$, would not necessarily yield Gaussian behavior. Therefore, if we were to use this standard Ansatz, it would not be possible to perform the standard nonlinear iteration scheme and show that $\tilde A$ also decays like a Gaussian. To overcome this, we must use an Ansatz that removes the quadratic terms from the equation. 

Returning to the first difficulty, as mentioned above (see also Lemma~\ref{lem-Lspectrum}), the essential spectrum of $\mathcal{L}$ touches the imaginary axis at the origin and $\mathcal{L}$ has a zero eigenvalue of multiplicity two. The associated eigenfunctions are $\partial_t A_\mathrm{source}$ and $\partial_x A_\mathrm{source}$, which correspond to time and space translations, respectively. Neither of these eigenfunctions are localized in space (nor are they localized with respect to the $(R, \phi)$ coordinates - see \eqref{def-V12}). This is due to the fact that the group velocities are pointing outward, away from the core of the defect, and so (localized) perturbations will create a non-local response of the phase. More precisely, the perturbed phase $\phi(x,t)$ will resemble an outwardly expanding plateau. This behavior will need to be incorporated in the analysis if we are to close a nonlinear iteration scheme.

In the proof, we write the solution $A(x,t)$ in the form 
\[
A(x+ p(x,t),t) =  [r(x) + R(x,t)] \rme^{\rmi (\varphi(x) + \phi(x,t))} \rme^{-\rmi \omega_0 t} ,
\]
and work with perturbation variables $(R(x,t),\phi(x,t))$. The advantages when working with these polar coordinates are i) they are consistent with the phase invariance (or gauge invariance) associated with \eqref{eqs-qCGLintro}; ii) the quadratic nonlinearity is a function of $R, \phi_x,$ and their higher derivatives, without any zero order term involving $\phi$; iii) based upon the leading order terms in the equation (see \S \ref{sec-asymptotic}), we expect that the time-decay in the amplitude $R$ is faster than that of the phase $\phi$. Roughly speaking, these coordinates effectively replace the equation $u_t = u_{xx} - u^2$, which is essentially what we would have for $\tilde A$, with  an equation like $u_t = u_{xx} - uu_x$, which is essentially what we obtain in the $(R, \phi)$ variables (but without the conservation law structure). In other words, with respect to $\tilde A$, the nonlinearity is relevant, but with respect to $(R, \phi)$, it is marginal \cite{BricmontKupiainen94}. 

In the case of a marginal nonlinearity, if there is an additional conservation law structure, as in for example Burgers equation ($uu_x = (u^2)_x/2$), then one can often exploit this structure to close the nonlinear stability argument. Here, however, that structure is absent, and so we must find another way to deal with the marginal terms. The calculations of \S\ref{sec-Asymptotic} show that, to leading order, the dynamics of $(R, \phi)$ are essentially governed by 
\begin{eqnarray*}
\partial_t \begin{pmatrix} R \\ r\phi \end{pmatrix} &\approx& \begin{pmatrix} 1-\alpha \beta^* & \alpha r \\ -\alpha(1+\beta^*) & r(1+\alpha\beta^*) \end{pmatrix} \partial_x^2 \begin{pmatrix} R \\ \phi \end{pmatrix} + \begin{pmatrix} -2(\alpha + \beta^*) \varphi_x & 2r\varphi_x \\ -2\varphi_x(1+(\beta^*)^2) & 2r\varphi_x(\beta^*-\alpha) \end{pmatrix} \partial_x \begin{pmatrix} R \\ \phi \end{pmatrix} \\
&& \qquad +\begin{pmatrix} -2r^2(1-2\gamma_1r^2) & 0 \\ 0 & 0 \end{pmatrix} \begin{pmatrix} R \\ \phi \end{pmatrix} + \begin{pmatrix}\mathcal{O}(R^2, \phi_x^2, R\phi_x) \\ q \phi_x^2 \end{pmatrix},
\end{eqnarray*}
where $q$ is defined in \eqref{E:defq}. The presence of the zero-order term $-2r^2(1-2\gamma_1r^2)R$ in the $R$ equation implies that it will decay faster than $\phi$. In fact, the above equation implies that to leading order $R \sim \phi_x$. Moreover, if we chose an approximate solution so that $R \sim k_0 \phi_x/(r_0(1-2\gamma_1r_0^2))$, then we see that $\phi$ satisfies exactly the Burgers equation given in \eqref{intro-Burgers} (up to terms that are exponentially localized). In order to close the nonlinear iteration, we will then need to incorporate these Burgers-type dynamics for $\phi$ into the Ansatz, which is done exactly through the approximate solution $\phi^a$. This is similar to the analysis of the toy model in \cite{BNSZ}.

When working with the polar coordinates, however, there is an apparent singularity when $r(x)$ vanishes. Such a point is inevitable since $r(x)\to \pm r_0$ with $r_0 \not = 0$ as $x\to \pm \infty$. We overcome this issue by writing the perturbation system as 
$$(\partial_t - \mathcal{L}) U = \mathcal{N}(R,\phi,p),$$ 
for $U = (R, r\phi)$, instead of $(R,\phi)$. Here $\mathcal{L}$ again denotes the linearized operator and $ \mathcal{N}(R,\phi,p)$ collects the remainder; see Lemmas~\ref{lem-linearization} and~\ref{lem-nonlinearUp} for details. Note that we do not write the remainder in terms of $U$, but leave it in terms of $R$ and $\phi$. Later on, once all necessary estimates for $U(x,t)$ and its derivatives are obtained, we recover the estimates for $(R(x,t),\phi(x,t))$ from those of $U(x,t)$, together with the observation that $\phi(x,t)$ should contain no singularity near the origin if $r(x)\phi(x,t)$ and its derivatives are regular; see Section~\ref{sec-esth2}. 

To make the above discussion rigorous, there will be four main steps. After stating some preliminary facts about sources and their linearized stability in \S\ref{sec-Prelim}, the first step in \S\ref{sec-resolvent} will be to construct the resolvent kernel by studying a system of ODEs that corresponds to the eigenvalue problem. In the second step, in \S\ref{sec-Green}, we derive pointwise estimates for the temporal Green's function associated with the linearized operator. These first two steps, although nontrivial, are by now routine following the seminal approach introduced by Zumbrun and  Howard \cite{ZH}. The third step, in \S\ref{sec-asymptotic},  is to construct the approximate Ansatz for the solution of qCGL, and the final step, in \S\ref{sec-stability}, is to introduce a nonlinear iteration scheme to prove stability. These last two steps are the novel and most technical ones in our analysis.  

To our knowledge, this is the first nonlinear stability result for a defect of source type, extending the theoretical framework to include this case. An interesting open problem at a practical level is to verify the spectral stability assumptions made here in some asymptotic regime; this is under current investigation. An important extension in the theoretical direction would be to treat the case of source defects of general reaction-diffusion equations not possessing a gauge invariance naturally identifying the phase. This would involve constructing a suitable approximate phase, sufficiently accurate to carry out a similar nonlinear analysis, a step that appears to involve substantial additional technical difficulty. We hope address this in future work.

\bigskip

{\bf Universal notation.} Throughout the paper, we write $g = \cO(f)$ to mean that there exists a universal constant $C$ so that $|g|\le C |f|$. 

%%%%%%%%%%%%%%%%%%%%%%%%%%%%%%%%%%%%%%%%%%%%%%%%%%

\section{Preliminaries}\label{sec-Prelim}

\subsection{Existence of a family of sources for qCGL}\label{sec-NB}
In this subsection, we prove the following lemma concerning the existence and some qualitative properties of the source solutions defined in \eqref{E:source}. 

\begin{lemma}\label{lem-NBexistence} There exists a $k_0 \in \mathbb{R}$ with $|k_0| < 1$ 
such that a source solution $A_\mathrm{source}(x, t)$ of \eqref{eqs-qCGLintro} of the form \eqref{E:source} exists and satisfies the following properties.

1.  The functions $r(x)$ and $\varphi(x)$ are $C^\infty$. Let $x_0$ be a point at which $r(x_0)=0$. Necessarily, $r'(x_0)\not =0$ and $r_{xx}(x_0) = \varphi_x(x_0) = 0$. 

2. The functions $r$ and $\varphi$ satisfy $ r(x) \to  \pm r_0(k_0)$ and $\varphi_x(x) \to  \pm k_0$ as $x\to \pm \infty$, respectively, where $r_0$ is defined in (\ref{asy-wnl}), below. Furthermore, 
\[
\Big |\frac{d^\ell}{dx^\ell } \Big (r(x) \mp r_0 (k_0)\Big )\Big | + \Big |\frac{d^{\ell+1} }{dx^{\ell+1} } \Big ( \varphi(x) \mp k_0 x\Big )\Big | \le C_0 \rme^{-\eta_0 |x|},
\]
for integers $\ell  \ge 0$ and for some positive constants $C_0$ and $\eta_0$.

3. As $x\to \pm \infty$, $A_{\mathrm{source}}(x,t)$ converges to the wave trains $A_\mathrm{wt}(x,t;\pm k_0) = \pm r_0(k_0) e^{i(\pm k_0x - \omega_\mathrm{nl} (k_0) t)}$, respectively, with 
\begin{equation}\label{asy-wnl} r_0^2  =  1 - k_0^2  + \gamma_1 r_0 ^4  , \qquad  \omega_\mathrm{nl}(k_0)= \beta + (\alpha - \beta) k_0^2 + (\beta \gamma_1- \gamma_2) r_0^4. \end{equation}
Necessarily, $\omega_0 = \omega_\mathrm{nl}(k_0)$. 

4. If $k_0 \not =0$, the asymptotic group velocities $c_\rmg^\pm : = \frac{d  \omega_\mathrm{nl}(k)}{d k}\vert_{k = \pm k_0} $ have opposite sign at $\pm \infty$ and satisfy
\begin{equation}\label{def-cg} c_\rmg^\pm  =  \pm c_\rmg,\qquad \quad c_\rmg: = 2k_0 (\alpha - \beta_*), \end{equation}
where $\beta_*$ is defined \eqref{def-betad}. 
Without loss of generality, we assume that $c_\rmg >0$. 

\end{lemma}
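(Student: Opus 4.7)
\emph{Plan.} The lemma combines an existence result with a list of asymptotic and local properties, so I would split the argument into three steps.

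\textbf{Step 1: Reduction to an ODE and analysis of its fixed points.} Plugging the ansatz $A(x,t) = r(x)e^{i\varphi(x)}e^{-i\omega_0 t}$ into \eqref{eqs-qCGLintro} and separating real and imaginary parts yields, after introducing $q := \varphi_x$, an autonomous three-dimensional ODE for $(r, r_x, q)$ in which $\omega_0$ appears as an unknown parameter. Since $1+\alpha^2 > 0$, one can algebraically resolve $r_{xx}$ and $q_x$ as $C^\infty$ functions of $(r,r_x,q)$, so that the resulting vector field is smooth and any bounded orbit immediately gives $r,\varphi\in C^\infty$. Stationary solutions with $r_x = 0$ are exactly the asymptotic wave trains: they satisfy the algebraic relations \eqref{asy-wnl} together with $\omega_0 = \omega_\mathrm{nl}(k_0)$. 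A direct linearization at $(\pm r_0, 0, \pm k_0)$ shows that both equilibria are hyperbolic in the two directions transverse to the gauge-induced neutral direction, which produces parts 1 and 3 of the lemma along with the exponential convergence in part 2 via the standard stable-manifold theorem (with $\eta_0$ equal to the smallest spectral gap at $\pm\infty$).

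\textbf{Step 2: Existence of the heteroclinic.} A source is a heteroclinic orbit of the above ODE connecting $(-r_0, 0, -k_0)$ at $x = -\infty$ to $(+r_0, 0, +k_0)$ at $x = +\infty$. In the Nozaki--Bekki limit $\gamma_1 = \gamma_2 = 0$ such heteroclinics are known explicitly and form a one-parameter family indexed by the asymptotic wave number. For small but nonzero $\gamma$ this degenerate family generically breaks up, and a Melnikov/Lyapunov--Schmidt argument selects a discrete set of values $k_0$ (necessarily small, hence satisfying $|k_0|<1$, because they continue values from the cCGL family) for which the perturbed stable and unstable manifolds still intersect transversely modulo the gauge and translation symmetries. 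I would quote this persistence result from \cite{Doelman96, KapitulaRubin00, SandstedeScheel04} rather than reproduce it; it is the main technical step and the expected main obstacle.

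\textbf{Step 3: Behaviour at zeros and the group velocity.} For the remaining assertion in part 1, suppose $r(x_0) = 0$. ODE uniqueness rules out $r_x(x_0) = 0$, because otherwise the orbit would coincide with $r\equiv 0$, so $r_x(x_0) \neq 0$. Substituting $r(x_0) = 0$ into the real and imaginary parts of the stationary equation leaves a $2\times 2$ linear system for $(r_{xx}(x_0), \varphi_x(x_0) r_x(x_0))$ whose determinant is proportional to $1+\alpha^2 \neq 0$, forcing $\varphi_x(x_0) = 0$ and $r_{xx}(x_0) = 0$. For part 4, implicit differentiation of the two relations in \eqref{asy-wnl} yields $dr_0/dk = -k/[r_0(1-2\gamma_1 r_0^2)]$, and substituting this into $d\omega_\mathrm{nl}/dk$ produces, after simplification, $c_\mathrm{g}(\pm k_0) = \pm 2k_0(\alpha - \beta_*)$ with $\beta_*$ as in \eqref{def-betad}; the opposite signs follow automatically, and the overall sign convention $c_\mathrm{g} > 0$ is achieved, if necessary, by the reflection symmetry $x \mapsto -x$ of the qCGL equation, which swaps $\pm k_0$.
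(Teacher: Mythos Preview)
Your outline matches the paper's proof closely: both cite \cite{Doelman96, SandstedeScheel04} for the existence of the heteroclinic and the transversality that yields the exponential convergence in item~2, both obtain \eqref{asy-wnl} by substituting the asymptotic wave-train ansatz, and both compute \eqref{def-cg} by implicit differentiation of those relations.

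There is, however, a gap in Step~1 that affects Step~3. Your claim that $q_x$ can be resolved as a $C^\infty$ function of $(r,r_x,q)$ is false at $r=0$: inverting the coefficient matrix $\bigl(\begin{smallmatrix}1&-\alpha\\\alpha&1\end{smallmatrix}\bigr)$ in the stationary system \eqref{eqs-NB} gives $r_{xx}$ and $r q_x$ (not $q_x$) as smooth functions of $(r,r_x,q)$, so the $q$-equation carries a genuine $1/r$ singularity. As a consequence, your appeal to ODE uniqueness in Step~3 (``otherwise the orbit would coincide with $r\equiv 0$'') is not valid in polar coordinates---indeed, $r\equiv0$ does not even determine a well-defined orbit of the $(r,r_x,q)$ system. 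The paper handles this by returning to the complex variable: the stationary equation
\[
(1+i\alpha)A_{xx} = -i\omega_0 A - A + (1+i\beta)|A|^2 A - \gamma|A|^4 A
\]
is a smooth (polynomial) second-order real ODE for $(\mathrm{Re}\,A,\mathrm{Im}\,A)$, with trivial solution $A\equiv 0$. Since $r(x_0)=r_x(x_0)=0$ forces $A(x_0)=A_x(x_0)=0$, uniqueness in \emph{those} coordinates yields the contradiction $A_{\mathrm{source}}\equiv 0$. With this single change of coordinates your argument is complete.
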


Before proving this, let us recall that, for the cubic CGL equation ($\gamma_1 = \gamma_2 = 0$), an explicit formula for the (traveling) source, which is known in this case as a Nozaki-Bekki hole, is given in \cite{BekkiNozaki85, Lega01, PoppStillerAranson95}.  
These Nozaki-Bekki holes are degenerate solutions of CGL in the sense that they exist in a non-transverse intersection of stable and unstable manifolds. More precisely,  when $\gamma_1 = \gamma_2 =0$, the formula for the standing Nozaki-Bekki holes is given by
\[
A_\mathrm{source}(x, t) = r_0 \mbox{tanh}(\kappa x) e^{-\rmi \omega_0 t} e^{-\rmi \delta\log(2\cosh \kappa x)},
\]
where $r_0 = \sqrt{1-k_0^2}$, $k_0 = -\delta \kappa$, and  $\delta$ and $\kappa$ are defined by
\[
\kappa^2 = \frac{(\alpha-\beta)}{(\alpha-\beta)\delta^2 - 3\delta(1+\alpha^2)}, \qquad \delta^2 + \frac{3(1+\alpha\beta)}{(\beta-\alpha)} \delta -2 = 0,
\]
with $\delta$ chosen to be the root of the above equation such that $\delta(\alpha-\beta) < 0$. See \cite{Lega01} for details. The asymptotic phases and group velocities are
\[
\begin{aligned}
\omega_\pm (k_0) &= \beta   + (\alpha  - \beta)k_0^2, \qquad c_{\rmg}^\pm &=\frac{d  \omega_\pm(k)}{d k}\vert_{k = \pm k_0} = \pm 2k_0 (\alpha  - \beta) ,
\end{aligned}
\]
which are the identities \eqref{asy-wnl} and \eqref{def-cg} with $\gamma_1=\gamma_2=0$. We note that since $c_{\rmg}^+ + c_{\rmg}^- = 0$, the asymptotic group velocities must have opposite signs. To see that the solution is really a source, one can check that
\[
c_{\rmg}^+-c_{\rmg}^- = -4\delta\kappa(\alpha-\beta) =  4\mbox{sgn}(\kappa) |\kappa \delta(\alpha-\beta)|. 
\]
If $\kappa > 0$, this difference is positive, and $c_{\rmg}^+$, which is then the group velocity at $+\infty$ is positive, and $c_{\rmg}^-$, which is then the group velocity at $-\infty$, is negative. Thus, the solution is indeed a source. If $\kappa < 0$, the signs of $c_{\rmg}^\pm$ are reversed, but so are the ends to which they correspond. Thus, the solution is indeed a source in all cases.

%The (qCGL) equation \eqref{eqs-qCGLintro} is a small perturbation of the cubic CGL, and it has been shown that the above solutions persist as standing sources for an open set of parameter values \cite{Doelman96, SandstedeScheel04}. Furthermore, they are constructed via a transverse intersection of stable and unstable manifolds. These facts are essential for the proof of Lemma~\ref{lem-NBexistence}. 

The (qCGL) equation \eqref{eqs-qCGLintro} is a small perturbation of the cubic CGL, and it has been shown that the above solutions persist as standing sources for an open set of parameter values \cite{Doelman96, SandstedeScheel04}. Furthermore, they are constructed via a transverse intersection of the two-dimensional center-stable manifold of the asymptotic wave train at infinity and the two-dimensional center-unstable manifold of the wave train at minus infinity that is unfolded with respect to the wavenumbers of these wave trains: in particular, the standing sources connect wave trains with a selected wavenumber. These facts are essential for the proof of Lemma 2.1.

\begin{proof}[Proof of Lemma~\ref{lem-NBexistence}] As mentioned above, standing sources have been proven to exist in  \cite{Doelman96} and \cite{SandstedeScheel04}. Let $A_{\mathrm{source}}(x,t)$ be that source, which we can write in the form 
$$A_\mathrm{source}(x,t) = r(x)e^{i(\varphi(x) - \omega_0t)}.$$
Plugging this into \eqref{eqs-qCGLintro}, we find that $(r,\varphi)$ solves 
 \begin{equation}\label{eqs-NB}\begin{aligned}
0 &= r_{xx} + r - r \varphi_x^2- 2\alpha  r_x \varphi_x  - \alpha r \varphi_{xx} - r^3  + \gamma_1 r^5 \\
0 &= r \varphi_{xx} + 2 r_x \varphi_x + \alpha r_{xx}  + \omega_0 r - \alpha r \varphi_x^2 - \beta r^3 + \gamma_2 r^5 .
\end{aligned}\end{equation}
It is shown in \cite{Doelman96} that there exists a locally unique wavenumber $k_0$ and a smooth solution $(r,\varphi)$ of \eqref{eqs-NB} so that $A_\mathrm{source}(x,t)$ converges to the wave trains of the form 
\[
A_\mathrm{wt}(x,t;\pm k_0 ) = \pm r_0(k_0)e^{i(\pm k_0 x - \omega_\mathrm{nl}(k_0)t)},
\]
respectively as $x \to \pm \infty$. Putting this asymptotic Ansatz into \eqref{eqs-NB} then yields
$$\begin{aligned}
\gamma_1 r_0 ^4  - r_0^2 + 1 - k_0^2   = 0, \qquad  \omega_\mathrm{nl} (k_0) = \alpha k_0^2 + \beta r_0^2 - \gamma_2 r_0^4.
\end{aligned}$$
Rearranging terms then gives \eqref{asy-wnl}, and hence item 3 in the lemma. Differentiating the above identities with respect to $k$ and solving for $\frac{d  \omega_\mathrm{nl}(k)}{d k}$, we obtain item 4 as claimed. 

For the first item, since $r(x) \to \pm r_0$ with $r_0 \approx \sqrt {1-k_0^2} \not =0$ (because $\gamma_1\approx 0$), there must be a point at which $r(x)$ vanishes. Without loss of generality, we assume that $r(0)=0$. Since $r(x)$ and $\varphi(x)$ are smooth, evaluating the system \eqref{eqs-NB} at $x=0$ gives
$$ r_{xx} (0) - 2\alpha  r_x(0) \varphi_x(0) = 0 , \qquad 2 r_x(0) \varphi_x(0) + \alpha r_{xx}(0) =0.$$
These equations imply that $r_{xx}(0) = r_x(0)\varphi_x(0) =0$. We now argue that $r_x(0)$ must be nonzero. Otherwise, since $A_\mathrm{source}(0) = r(0) e^{\rmi(\varphi(0) - \omega_0t)}$,
$\partial_{x}A_\mathrm{source}(0) = [r_x(0) + \rmi \varphi_x(0) r(0)] e^{\rmi(\varphi(0) - \omega_0t)}$, and $\partial_{xx}A_\mathrm{source}(0) = [r_{xx}(0) + 2 \rmi r_x(0)\varphi_x(0) + \rmi r(0)\varphi_{xx}(0) - r(0) \varphi_x^2(0)] e^{\rmi(\varphi(0) -\omega_0t)}$, 
we would have $A_\mathrm{source}(0) = \partial_{x}A_\mathrm{source}(0) = \partial_{xx}A_\mathrm{source}(0) = 0$ and so $\partial_x^k A_\mathrm{source}(0) = 0$ for all $k \ge 0$ by repeatedly using the equation \eqref{eqs-qCGLintro}. This would imply at once that $A_\mathrm{source}(x) \equiv 0$. Item 1 is thus proved.  The exponential decay stated in item 2 is a direct consequence of fact that the solutions in \cite{Doelman96, SandstedeScheel04} are shown to lie in the intersection of stable and unstable manifolds of saddle equilibria.
\end{proof}

%%%%%%%%%%%%%%%%%%%%%%%%%%%%%%%%%%%%%%%%%%%%%%%%

\subsection{Linearization} \label{sec-linearization}

In order to linearize \eqref{eqs-qCGLintro} around $A_\mathrm{source}(x,t)$, we introduce the perturbation variables $(R,\phi)$ via
\begin{equation}\label{A-intro}
A(x,t) =  [r(x) + R(x,t)] \rme^{\rmi (\varphi(x) + \phi(x,t))}\rme^{-\rmi \omega_0 t},
\end{equation}
where $r$ and $\varphi$ are the amplitude and phase of $A_\mathrm{source}(x,t)$. Throughout the paper, we shall work with the vector variable 
$$ U: = \begin{pmatrix} R \\ r \phi \end{pmatrix} $$ 
for the perturbation $(R,\phi)$. We have the following lemma. 
\begin{lemma}\label{lem-linearization} If the $A(x,t)$, defined in \eqref{A-intro}, solves \eqref{eqs-qCGLintro}, then the linearized dynamics of $U$ are 
\[
(\partial_t - \mathcal{L}) U = 0,
\]
where 
 \begin{equation}\label{def-L}
\mathcal{L}: = D_2 \partial_x^2 - 2 \varphi_x D_1 \partial_x + D_0^\varphi(x) + D_0(x)
\end{equation}
with  
\begin{equation}\label{E:defD}
\begin{aligned}
D_1: = \begin{pmatrix} \alpha & 1 \\ -1 & \alpha\end{pmatrix}, & \qquad  D_2 :=  \begin{pmatrix} 1 & -\alpha \\ \alpha & 1\end{pmatrix}  
\\ D_0^\varphi(x):= \begin{pmatrix} 0&  \frac 1r (2\varphi_x r_x + \alpha r_{xx})\\
0& \frac 1r ( 2\alpha \varphi_x r_x - r_{xx})
\end{pmatrix} ,&\qquad D_0(x) : =  \begin{pmatrix} 1 - 3r^2 - \alpha\varphi_{xx} - \varphi_x^2 + 5 \gamma_1 r^4 & 0\\
\omega_0 -3\beta r^2 - \alpha \varphi_x^2 + \varphi_{xx} + 5\gamma_2 r^4 & 0\end{pmatrix} .
\end{aligned}
\end{equation}
\end{lemma}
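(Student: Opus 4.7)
My plan is to substitute the polar ansatz \eqref{A-intro} directly into \eqref{eqs-qCGLintro}, divide through by the common phase factor $e^{i(\varphi+\phi)}e^{-i\omega_0 t}$, and then split into real and imaginary parts. Expanding $(r+R)^{2n+1}$ to first order in $R$ and keeping only terms linear in $(R,\phi)$ and their derivatives, one obtains a $2\times 2$ system in $(R,\phi)$. The zeroth-order terms (those involving only $r$ and $\varphi$) are exactly the left-hand sides of the source equations \eqref{eqs-NB}, so they cancel, and we are left with a purely linear system.

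The next step is to pass from coordinates $(R,\phi)$ to $U=(R,r\phi)^\top$. Writing $\phi = U_2/r$ and computing
\[
\phi_x = \frac{(U_2)_x}{r} - \frac{r_x}{r^2} U_2, \qquad \phi_{xx} = \frac{(U_2)_{xx}}{r} - \frac{2r_x}{r^2}(U_2)_x + \Bigl(\frac{2r_x^2}{r^3} - \frac{r_{xx}}{r^2}\Bigr) U_2,
\]
the $R$-equation picks up new contributions when $\phi_x$ and $\phi_{xx}$ are re-expressed in terms of $U_2$; after regrouping, these contributions produce exactly the column of $D_0^\varphi$ (the second column, since only $U_2$-multipliers are affected) together with the $-2\varphi_x D_1\partial_x$ and $D_2\partial_x^2$ terms. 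For the $\phi$-equation, I would first multiply through by $r$, so that the time-derivative side becomes $\partial_t U_2$, and then carry out the same rewriting; the mixed $r\phi_{xx}$ and $r_x\phi_x$ terms reorganize into $(U_2)_{xx}$ plus a correction proportional to $U_2/r$, again contributing to $D_0^\varphi$. Matching coefficients term by term identifies $D_2$ and $D_1$ (from the standard $(1+i\alpha)$ structure of the Laplacian, acting now as a real $2\times 2$ matrix on real/imaginary components) and $D_0$ (from the amplitude terms $-3r^2$, $5\gamma_1 r^4$, $-\omega_0$-resummation, etc.).

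The one genuinely delicate point is the apparent $1/r$ singularity appearing in $D_0^\varphi$ and in the coefficient $2r_x^2/r^3$ that enters during the rewriting. I would handle this by invoking item~1 of Lemma~\ref{lem-NBexistence}: at any zero $x_0$ of $r$ one has $r_{xx}(x_0)=\varphi_x(x_0)=0$ and $r_x(x_0)\neq 0$, so both $r_{xx}/r$ and $\varphi_x r_x/r$ are smooth by a Taylor expansion around $x_0$, and the $2r_x^2/r^2$ piece must cancel against a corresponding contribution hidden in the $(U_2)_{xx}$ reorganization after the $\phi$-equation is multiplied by $r$. Verifying this cancellation, and the smoothness of the remaining coefficients, is the technical crux; once done, the coefficients assemble into precisely $D_2$, $-2\varphi_x D_1$, $D_0^\varphi$, and $D_0$ as given in \eqref{E:defD}, yielding $(\partial_t - \mathcal{L})U = 0$.

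Overall the proof is computational rather than conceptual; the main obstacle is bookkeeping the many terms arising from the change of variables $\phi \mapsto r\phi$ and verifying that the $1/r$ factors combine into the smooth expressions displayed in $D_0^\varphi$, so that $\mathcal{L}$ is a well-defined differential operator on all of $\mathbb{R}$ despite the zeros of $r$.
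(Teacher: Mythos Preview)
Your approach is essentially identical to the paper's: substitute the ansatz, use \eqref{eqs-NB} to kill the zeroth-order terms, obtain the linear system in $(R,\phi)$, then rewrite in terms of $U=(R,r\phi)$. The paper simply states the resulting $(R,\phi)$ system and then dismisses the passage to $U$ as ``a rearrangement of terms,'' without discussing the $1/r$ issue at all.

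Your extra care about the $r_x^2/r^2$ contributions is warranted, and the cancellation you anticipate does occur: in both equations the $2r_x^2/r^2$ term arising from $\phi_{xx}$ is exactly cancelled by the $r_x/r^2$ piece of $\phi_x$ multiplied against the first-order coefficient containing $r_x$, leaving only the combinations $(2\varphi_x r_x+\alpha r_{xx})/r$ and $(2\alpha\varphi_x r_x-r_{xx})/r$ that appear in $D_0^\varphi$. Your invocation of Lemma~\ref{lem-NBexistence}(1) to conclude smoothness of these quotients at zeros of $r$ is correct and is a point the paper leaves implicit.
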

\begin{proof} Plugging \eqref{A-intro} into \eqref{eqs-qCGLintro} and using equation \eqref{eqs-NB}, we obtain the linearized system
$$\begin{aligned}
R_t &= R_{xx} -2\alpha\varphi_x R_x + (1 - 3r^2 - \alpha\varphi_{xx} - \varphi_x^2 + 5 \gamma_1 r^4)R
-\alpha r \phi_{xx} - 2(\alpha r_x + r\varphi_x)\phi_x
 \\ r\phi_t &= r\phi_{xx}  - (2\alpha r \varphi_x - 2r_x )\phi_x + \alpha R_{xx} + 2\varphi_x R_x  + (\omega_0 + \varphi_{xx} - \alpha\varphi_x^2-3\beta r^2 + 5 \gamma_2 r^4) R .
\end{aligned}$$ 
A rearrangement of terms yields the lemma. \end{proof}

Next, note that $\mathcal{L}$ is a bounded operator from $\dot H^2(\mathbb{R};\mathbb{C}^2) \bigcap L^\infty(\mathbb{R}; \mathbb{C}^2)$ to $L^2(\mathbb{R};\mathbb{C}^2)$, where the function space $\dot H^2(\mathbb{R};\mathbb{C}^2)$ consists of functions $U = (u_1,u_2)$ so that $u_1 \in H^2(\mathbb{R};\mathbb{C})$ and  $\partial_x u_2, \partial_x^2u_2 \in L^2(\mathbb{R};\mathbb{C}).$ 
(Note that we do not require that $u_2 \in L^2$, since $r\phi$ need not be localized.) We make the following assumption about the spectral stability of $\mathcal{L}$.

\begin{Hypothesis} \label{H:spec-stab}
The spectrum of the operator $\mathcal{L}$ in $L^2(\mathbb{R}, \mathbb{C}^2)$ satisfies the following two conditions:
\begin{itemize}
\item The spectrum does not intersect the closed right half plane, except at the origin.
\item In the weighted space $L^2_\eta(\mathbb{R}, \mathbb{C}^2)$, defined by $\|u\|_\eta^2 = \int e^{-\eta|x|}|u(x)|^2 \rmd x$ with $\eta > 0$ sufficiently small, there are exactly two eigenvalues in the closed right half plane, and they are both at the origin.
\end{itemize}
\end{Hypothesis}

\begin{lemma}\label{lem-Lspectrum} The essential spectrum of the linearized operator $\mathcal{L}$ in $L^2(\mathbb{R},\mathbb{C}^2)$ lies entirely in the left half-plane $\{ \Re \lambda \le  0\}$ and touches the imaginary axis only at the origin as a parabolic curve; see Figure~\ref{fig-Omega}. In addition, the nullspace of $\mathcal{L}$ is spanned by functions $V_1(x)$ and $V_2(x)$, defined by 
\begin{equation}\label{def-V12}V_1(x) := \begin{pmatrix} r_x \\ r\varphi_x \end{pmatrix} \qquad  \mbox{and}\qquad  V_2(x) := \begin{pmatrix} 0 \\ r\end{pmatrix}, \end{equation} 
and the corresponding adjoint eigenfunctions, $\psi_{1,2}(x)$, are both exponentially localized: $|\psi_{1,2}(x)| \leq C e^{-\eta_0|x|}$ for some $C, \eta_0 > 0$. \end{lemma}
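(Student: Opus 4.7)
The statement splits cleanly into two essentially independent parts: (i) characterization of the essential spectrum via the asymptotic operators $\mathcal{L}_\pm$, and (ii) identification of the two-dimensional kernel together with the exponential decay of its adjoint null functions. I would treat these in turn.

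For (i), since Lemma~\ref{lem-NBexistence} guarantees that the coefficients of $\mathcal{L}$ converge exponentially to constant matrices as $x\to\pm\infty$, the standard essential-spectrum theorem for asymptotically constant-coefficient operators identifies $\sigma_{\mathrm{ess}}(\mathcal{L})$ with the union of the spectra of the limiting operators $\mathcal{L}_\pm$. Computing these limits one finds $D_0^\varphi(x)\to 0$, and using the algebraic relations \eqref{asy-wnl} the matrix $D_0(x)$ simplifies to
\[
D_0^\pm = -2r_0^2(1-2\gamma_1 r_0^2)\begin{pmatrix} 1 & 0 \\ \beta_* & 0\end{pmatrix},
\]
a rank-one matrix with eigenvalues $0$ and $-2r_0^2(1-2\gamma_1 r_0^2)<0$. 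The essential spectrum of $\mathcal{L}_\pm$ is then the set of $\lambda\in\mathbb{C}$ for which $\det(-\kappa^2 D_2 - 2i(\pm k_0)\kappa D_1 + D_0^\pm -\lambda I)=0$ has a solution $\kappa\in\mathbb{R}$, giving two algebraic branches per side. Standard matrix perturbation theory applied at $\kappa=0$ on the zero eigenvalue of $D_0^\pm$ produces the parabolic expansion $\lambda^\pm_{\mathrm{lin}}(\kappa) = \mp ic_\mathrm{g}\kappa - \mathrm{d}\kappa^2 + \mathcal{O}(\kappa^3)$ with $\mathrm{d}$ as in \eqref{def-betad}, while the second branch perturbs off a strictly negative eigenvalue. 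Combined with the $|\kappa|\to\infty$ asymptotics (where both branches behave like $-\kappa^2(1\pm i\alpha)$ and hence have $\Re\lambda\to -\infty$) and an intermediate algebraic check using the explicit quadratic formula for the two roots, the hypothesis $\mathrm{d}>0$ confirms that both branches lie in the closed left half-plane globally, touching the imaginary axis only at $\lambda=0$.

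For (ii), the two null modes arise from the continuous symmetries of \eqref{eqs-qCGLintro}. Translation invariance gives $\partial_x A_\mathrm{source} = (r_x + \rmi r\varphi_x)\rme^{\rmi\varphi}\rme^{-\rmi\omega_0 t}$, whose $(R, r\phi)$-coordinates are exactly $V_1$; gauge invariance $A\mapsto\rme^{\rmi\theta}A$ yields at first order in $\theta$ the mode $\rmi A_\mathrm{source}$, corresponding to $(R,\phi)=(0,1)$, i.e.\ $U=V_2=(0,r)^T$. Direct substitution into the definition \eqref{def-L}, using \eqref{eqs-NB} and its $x$-derivative to cancel entries built from $r$ and $\varphi$, verifies $\mathcal{L}V_j=0$. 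Linear independence is clear since the first component of $V_2$ vanishes identically while that of $V_1$ is nonzero at generic $x$. Since Hypothesis~\ref{H:spec-stab} bounds the total algebraic multiplicity of $\lambda=0$ in $L^2_\eta$ by two and $V_1, V_2\in L^\infty\subset L^2_\eta$ are already genuine eigenfunctions, we conclude $\ker\mathcal{L}=\mathrm{span}\{V_1, V_2\}$ with the zero eigenvalue semisimple.

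For the exponential localization of $\psi_{1,2}$, I would conjugate $\mathcal{L}$ by the weight $\rme^{\eta|x|/2}$ to reduce the spectral problem on $L^2_\eta$ to one on $L^2$ for $\tilde{\mathcal{L}}=\rme^{-\eta|x|/2}\mathcal{L}\rme^{\eta|x|/2}$. Evaluating the shifted symbol at $+\infty$ at $\zeta=\kappa-i\eta/2$ yields $\Re\lambda^+_{\mathrm{lin}}=-c_\mathrm{g}\eta/2 + \mathrm{d}\eta^2/4 - \mathrm{d}\kappa^2<0$ for $\eta>0$ sufficiently small; the symmetric computation at $-\infty$, using $c_\mathrm{g}^-=-c_\mathrm{g}$, gives the same sign. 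Hence $\lambda=0$ is isolated in $\sigma(\tilde{\mathcal{L}})$ with algebraic multiplicity two by Hypothesis~\ref{H:spec-stab}, and the same holds for $\tilde{\mathcal{L}}^*$ by Fredholm duality. Any $\tilde\psi_j\in\ker\tilde{\mathcal{L}}^*\subset L^2$ then yields $\psi_j := \rme^{-\eta|x|/2}\tilde\psi_j\in\ker\mathcal{L}^*$ with exponential $L^2$-decay, and an exponential-dichotomy argument applied to the first-order system associated with $\mathcal{L}^*\psi=0$ upgrades this integrability to the pointwise bound $|\psi_j(x)|\le C\rme^{-\eta_0|x|}$. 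The main obstacle is this last step: one must verify that the weighted spectral gap transfers faithfully to the adjoint, and that the \emph{outward}-pointing group velocities at both infinities allow the same weight $\rme^{\eta|x|/2}$ to shift both ends of the essential spectrum into the left half-plane---an alignment specific to sources that would fail for sinks---so that the Riesz projector at $\lambda=0$ extracts exactly two exponentially localized adjoint modes.
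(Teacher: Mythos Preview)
Your argument is correct and matches the paper's approach closely. For the essential spectrum and the identification of $\ker\mathcal{L}$ via the symmetries plus Hypothesis~\ref{H:spec-stab}, you do exactly what the paper does (reduction to the constant-coefficient limits $\mathcal{L}_\pm$, Fourier symbol expansion near $\kappa=0$, and invocation of the translation and gauge/time-translation invariances). For the exponential localization of $\psi_{1,2}$ the paper is content to point at the asymptotic first-order adjoint system \eqref{E:adj} and cite \cite[Corollary~4.6]{SandstedeScheel04}, whereas you give a more self-contained route via conjugation by $\rme^{\eta|x|/2}$ and Fredholm duality; both rest on the same source-specific mechanism (outward group velocities push the essential spectrum leftward under the \emph{same} exponential weight at both ends), and your version has the advantage of making that mechanism explicit rather than deferring to a reference.
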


\begin{figure}[h]
\centering\includegraphics[scale=.3]{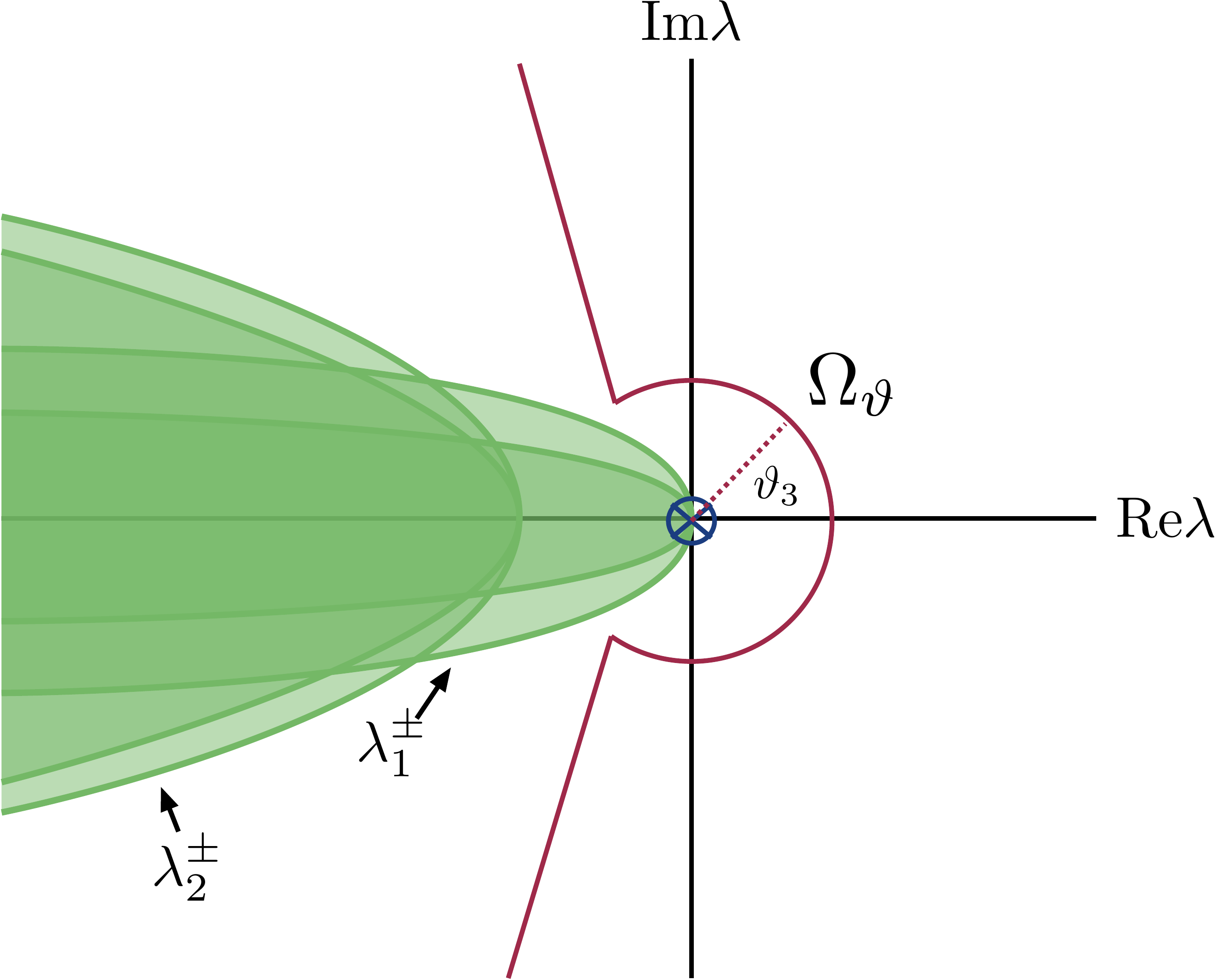} 
\caption{\em The essential spectrum of $\mathcal{L}$ is contained in the shaded region determined by the algebraic curves $\lambda_1^\pm(\kappa), \lambda_2^\pm(\kappa)$. The set $\Omega_\vartheta$ and its boundary $\Gamma = \partial \Omega_\vartheta$, defined in \eqref{def-Omega}, are also illustrated. Note that they both lie outside of the essential spectrum.}
\label{fig-Omega}
\end{figure}

\begin{proof} It follows directly from the translation and gauge invariance of qCGL that $\partial_x A_\mathrm{source}(x,t)$ and $\partial_t A_\mathrm{source}(x,t)$ are exact solutions of the linearized equation about $A_\mathrm{source}(x,t)$. Consequently, $V_1(x),V_2(x)$ belong to the kernel of $\mathcal{L}$. By hypothesis~\ref{H:spec-stab} these are the only elements of the kernel.

Next, by standard spectral theory (see, for instance, Henry \cite{Henry81}), the essential spectrum of $\mathcal{L}$ in $L^2(\mathbb{R},\mathbb{C}^2)$ is the same as that of the limiting, constant-coefficient operator $\mathcal{L}_\pm$ defined by 
\[
\begin{aligned} 
\mathcal{L}_\pm &: = D_2 \partial_x^2 \mp 2 k_0 D_1 \partial_x +  D_0^\infty,\qquad \text{with}\quad D_0^\infty: = -2 r_0^2 \begin{pmatrix} 1 - 2 \gamma_1 r_0^2 &  0\\
\beta  -2\gamma_2 r_0^2 & 0\end{pmatrix} .
\end{aligned}
\]
To find the spectrum of $\mathcal{L}_\pm$, let us denote by $\Lambda_\kappa^\pm$ for each fixed $\kappa \in \mathbb{R}$ the constant matrix
\[
\Lambda_\kappa^\pm : = - \kappa^2 D_2  \mp  2i\kappa k_0  D_1 -   D_0^\infty,
\]
and let $\lambda_1^\pm(\kappa),\lambda_2^\pm(\kappa)$ be the two eigenvalues of $\Lambda_\kappa^\pm$ with $\Re \lambda_1^\pm(0) = 0$. Clearly, $\Re \lambda_j^\pm(\kappa) <0$ for all $\kappa \not = 0$, and $\lambda_1^\pm(\kappa)$ touches the origin as a parabolic curve. Precisely, we have 
\begin{equation}\label{gv-evalue}
\lambda_1^\pm(\kappa) = - i c_\mathrm{g}^\pm  \kappa  -\mathrm{d} \kappa^2+ \mathcal{O}(\kappa^3) 
\end{equation}
for sufficiently small $\kappa$, whereas $\lambda_2^\pm(\kappa)$ is bounded away from the imaginary axis.  It then follows that the spectrum of $\mathcal{L}_\pm$ is confined to the shaded region to the left of the curves $\lambda_j^\pm(\kappa)$ for $\kappa \in \mathbb{R}$, as depicted in Figure~\ref{fig-Omega}. 

Finally, one can see from studying the asymptotic limits of \eqref{E:adj} that the adjoint eigenfunctions are exponentially localized; alternatively, this property was shown more generally in \cite[Corollary~4.6]{SandstedeScheel04}. \end{proof}

%%%%%%%%%%%%%%%%%%%%%%%%%%%%%%%%%%%%%%%%%%%%%%

\section{Construction of the resolvent kernel}\label{sec-resolvent}
We now construct the resolvent kernel and derive resolvent estimates for the linearized operator $\mathcal{L}$, defined in \eqref{def-L}. These resolvent estimates will be used below, in \S\ref{sec-Green}, to obtain pointwise estimates for the Green's function.

Let $G(x,y, \lambda)$ denote the resolvent kernel associated with the operator $\mathcal{L}$, which is defined to be the distributional solution of the system 
\begin{equation}\label{eqs-resG} (\lambda - \mathcal{L}) G(\cdot,y,\lambda) = \delta_y(\cdot)\end{equation}
for each $y \in \mathbb{R}$, where $\delta_y(x)$ denotes the Dirac delta function centered at $x = y$. The main result in this section provides pointwise bounds on $G(x,y,\lambda)$. We divide it into three different regions: low-frequency ($\lambda \to 0$), mid-frequency ($\theta \le |\lambda|\le M$), and high-frequency ($\lambda \to \infty$). The reason for this separation has to do with the behavior of the spatial eigenvalues of the four-dimensional, first-order ODE associated with \eqref{eqs-resG}, which is given below in \eqref{eqW}. These spatial eigenvalues determine the key features of the resolvent kernel $G$ and depend on the spectral parameter $\lambda$ in such a way so that the asymptotics of $G$ can be characterized differently in these three regions.

Define
\begin{equation}\label{def-Omega} 
\Omega_\vartheta : = \Big \{\lambda\in \mathbb{C}:\quad \Re \lambda \ge -\vartheta_1 - \vartheta_2 |\Im \lambda| , \quad |\lambda| \ge \vartheta_3 \Big\},
\end{equation} 
where the real constants $\vartheta_1,\vartheta_2,\vartheta_3$ are chosen so that $\Omega_\vartheta$ does not intersect the spectrum of $\mathcal{L}$; see Figure~\ref{fig-Omega} and Lemma~\ref{lem-Lspectrum}.  
The following propositions are the main results of this section, and their proofs will be given below.

\begin{proposition}[High-frequency bound]\label{prop-resHF} There exist positive constants $\vartheta_{1,2}$, $M$, $C$, and $\eta$ so that 
\begin{equation*}
\begin{aligned}
|\partial_x^k G(x,y,\lambda)| \le C |\lambda|^{\frac{k-1}{2}} \rme^{-\eta |\lambda|^{1/2} |x-y|} 
\end{aligned}
\end{equation*}
for all $\lambda \in \Omega_\vartheta \bigcap \{|\lambda|\ge M\}$ and $k = 0,1,2$. 
\end{proposition}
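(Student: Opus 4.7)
The plan is to exploit the parabolic scaling of $\mathcal{L}$. First I would recast the homogeneous equation $(\lambda-\mathcal{L})U=0$ as a first-order ODE system $W_x = \mathbb{A}(x,\lambda)W$ for $W=(U,U_x)^\top\in\mathbb{C}^4$, with
\[
\mathbb{A}(x,\lambda) = \begin{pmatrix} 0 & I_2 \\ D_2^{-1}(\lambda I_2 - D_0^\varphi(x)-D_0(x)) & 2\varphi_x(x)\, D_2^{-1}D_1 \end{pmatrix}.
\]
By Lemma~\ref{lem-NBexistence}, $\mathbb{A}(x,\lambda)$ converges to constant limit matrices $\mathbb{A}_\pm(\lambda)$ exponentially fast as $x\to\pm\infty$. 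For $|\lambda|$ large the spatial eigenvalues of $\mathbb{A}_\pm(\lambda)$ are, to leading order, the four values $\mu=\pm\sqrt{\lambda/(1\pm i\alpha)}$, the square roots of the eigenvalues of $\lambda D_2^{-1}$ (since $D_2$ has eigenvalues $1\pm i\alpha$). A direct computation shows these four branches split cleanly into two with $\Re\mu\ge \eta|\lambda|^{1/2}$ and two with $\Re\mu\le -\eta|\lambda|^{1/2}$ for some uniform $\eta>0$, provided $\lambda/(1\pm i\alpha)$ stays in a fixed sector around the positive real axis. The constants $\vartheta_1,\vartheta_2$ defining $\Omega_\vartheta$ are chosen small enough so that, for $|\lambda|\ge\vartheta_3$, this sector condition is enforced.

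Next I would build stable and unstable bases via a rescaling. Setting $\tilde x=|\lambda|^{1/2}x$ and $\widetilde W=(U,\,|\lambda|^{-1/2}U_x)^\top$, the system becomes
\[
\widetilde W_{\tilde x} = \widetilde{\mathbb{A}}_0(\lambda)\,\widetilde W + |\lambda|^{-1/2}\mathbb{B}(x,\lambda)\,\widetilde W, \qquad \widetilde{\mathbb{A}}_0(\lambda) = \begin{pmatrix} 0 & I_2 \\ (\lambda/|\lambda|)\,D_2^{-1} & 0 \end{pmatrix},
\]
where $\widetilde{\mathbb{A}}_0(\lambda)$ has four order-one eigenvalues uniformly separated off the imaginary axis and $\mathbb{B}$ is bounded. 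A conjugation/tracking argument, of the type used in Zumbrun--Howard, then yields solutions $\phi_1^\pm,\phi_2^\pm$ decaying as $x\to\pm\infty$ and satisfying
\[
|\phi_j^\pm(x,\lambda)| \lesssim e^{-\eta|\lambda|^{1/2}|x|}, \qquad |\partial_x\phi_j^\pm(x,\lambda)| \lesssim |\lambda|^{1/2}\, e^{-\eta|\lambda|^{1/2}|x|}.
\]
The resolvent kernel is assembled in the standard way,
\[
G(x,y,\lambda) =
\begin{cases}
\Phi^+(x,\lambda)\,C^+(y,\lambda), & x>y,\\
-\Phi^-(x,\lambda)\,C^-(y,\lambda), & x<y,
\end{cases}
\]
with $\Phi^\pm = (\phi_1^\pm,\phi_2^\pm)$ and the $2\times 2$ coefficient matrices $C^\pm(y,\lambda)$ fixed by continuity of $U$ and the prescribed jump of $U_x$ at $x=y$. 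Solving this $4\times 4$ linear system by Cramer's rule, the Wronskian denominator---a determinant with two rows scaling like $|\lambda|^{1/2}$---supplies the $|\lambda|^{-1/2}$ prefactor for $k=0$, and each additional $x$-derivative applied to $G$ brings down a further $|\lambda|^{1/2}$ from the decaying exponentials, giving the stated $|\lambda|^{(k-1)/2}$ bound for $k=0,1,2$.

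The hard part will be establishing the uniform exponential dichotomy for $\widetilde{\mathbb{A}}_0(\lambda)+|\lambda|^{-1/2}\mathbb{B}$ as $|\lambda|\to\infty$. Because the background $\widetilde{\mathbb{A}}_0(\lambda)$ itself depends on $\lambda/|\lambda|$, the argument requires continuity of the stable/unstable spectral projections over the compact arc $\{\lambda/|\lambda|:\lambda\in\Omega_\vartheta,\,|\lambda|\ge M\}\subset S^1$ together with a spectral gap that is uniform in $\lambda$; the exclusion, by the choice of $\vartheta_1,\vartheta_2$, of small neighborhoods of the rays $\lambda/(1\pm i\alpha)\in(-\infty,0]$ is precisely what supplies this gap. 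Once it is in hand, a contraction-mapping argument in an exponentially weighted space delivers the bases $\phi_j^\pm$ with error bounds uniform in $\lambda$, and the proposition follows by inspection of the resulting resolvent formula.
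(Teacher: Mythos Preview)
Your proposal is correct and follows essentially the same route as the paper: both rescale via $\tilde x=|\lambda|^{1/2}x$ to reduce to a first-order system whose principal part $\widetilde{\mathbb{A}}_0(\lambda)$ is uniformly hyperbolic on the relevant arc of $S^1$, then perturb for $|\lambda|$ large. The only tactical difference is that the paper obtains the exponential dichotomy by an energy estimate---projecting onto stable/unstable eigenspaces of $\widetilde{\mathbb{A}}_0(\tilde\lambda)$ and taking inner products to get differential inequalities $\frac{d}{d\tilde x}|\widetilde W^{s}|^2\le -2\eta|\widetilde W^{s}|^2+\mathcal{O}(|\lambda|^{-1/2}|\widetilde W|^2)$---and then reads off the resolvent bound directly in the scaled variables, whereas you propose to construct explicit decaying bases by a contraction/conjugation argument and assemble $G$ via Cramer's rule; both lead to the same estimate.
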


\begin{proposition}[Mid-frequency bound]\label{prop-resMF} For any positive constants $\vartheta_3$ and $M$, there exists a $C=C(M, \vartheta_3)$ sufficiently large so that 
\begin{equation*}
|\partial_x^kG(x,y,\lambda) | \le C(M, \vartheta_3),
\end{equation*} 
for all $\lambda \in \Omega_\theta \bigcap \{|\lambda |\le M\}$ and $k = 0,1,2$. 
\end{proposition}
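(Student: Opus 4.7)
The plan is to exploit the fact that $\Omega_\vartheta \cap \{|\lambda|\le M\}$ is a compact subset of the resolvent set of $\mathcal{L}$: by the choice of $\vartheta_{1,2,3}$ in \eqref{def-Omega} together with Hypothesis~\ref{H:spec-stab} and Lemma~\ref{lem-Lspectrum}, the essential and point spectrum are both disjoint from $\Omega_\vartheta$. Once the resolvent kernel is represented via a finite-dimensional ODE construction that depends continuously on $\lambda$, compactness will immediately yield the uniform bound, without any of the delicate asymptotic tracking needed in the high- or low-frequency regimes.

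First, I would rewrite $(\lambda - \mathcal{L})G(\cdot,y,\lambda) = \delta_y$ as a first-order system in $\mathbb{C}^4$ by setting $W = (U,U_x)^\top$, giving $W_x = \mathbb{A}(x,\lambda) W$ for $x\neq y$ with a prescribed jump at $x=y$ determined by $D_2^{-1}$. The coefficient matrix $\mathbb{A}(x,\lambda)$ is smooth in $x$ and analytic in $\lambda$, and by Lemma~\ref{lem-NBexistence}(2) it converges exponentially to constant asymptotic matrices $\mathbb{A}_\pm(\lambda)$ as $x\to\pm\infty$. For $\lambda\in\Omega_\vartheta$, the essential spectrum argument in the proof of Lemma~\ref{lem-Lspectrum} shows that $\mathbb{A}_\pm(\lambda)$ are hyperbolic, with exactly two spatial eigenvalues of each sign, whose real parts are bounded uniformly away from zero on the compact region under consideration.

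Second, I would construct, via standard asymptotic ODE theory (Levinson/conjugation lemma), two linearly independent solutions $W_j^+(x,\lambda)$ decaying as $x\to+\infty$ and two solutions $W_j^-(x,\lambda)$ decaying as $x\to-\infty$, each analytic in $\lambda\in\Omega_\vartheta$ and continuous in $x$. Hypothesis~\ref{H:spec-stab} guarantees that no nontrivial $L^2$ solution of $(\lambda-\mathcal{L})U=0$ exists in $\Omega_\vartheta$; equivalently, the Evans-type determinant $\det[W_1^-,W_2^-,W_1^+,W_2^+](x,\lambda)$ is nonzero throughout $\Omega_\vartheta\cap\{|\lambda|\le M\}$, and by compactness it is bounded below in modulus. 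The resolvent kernel $G(x,y,\lambda)$ is then assembled in the standard way from these four solutions using the jump condition at $x=y$, producing an explicit Wronskian/variation-of-parameters formula. Continuity of $W_j^\pm$ and the uniform transversality lower bound give joint continuity of $G$ in $(x,y,\lambda)$, hence uniform pointwise boundedness on the compact parameter region; derivatives $\partial_x^k G$ for $k=1,2$ are controlled by reading off the remaining components of $W$ and using the ODE $W_x = \mathbb{A}(x,\lambda) W$.

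The main obstacle I anticipate is the apparent singularity in $D_0^\varphi$ at zeros of $r(x)$, which would naively prevent $\mathbb{A}(x,\lambda)$ from being smooth. However, by Lemma~\ref{lem-NBexistence}(1), at any zero $x_0$ of $r$ we also have $\varphi_x(x_0) = r_{xx}(x_0) = 0$ and $r_x(x_0) \neq 0$, so the quotients $\tfrac{1}{r}(2\varphi_x r_x + \alpha r_{xx})$ and $\tfrac{1}{r}(2\alpha\varphi_x r_x - r_{xx})$ extend smoothly by L'Hôpital, and $\mathbb{A}(x,\lambda)$ is indeed $C^\infty$ in $x$ and uniformly bounded on all of $\mathbb{R}$. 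With this observation, the construction above is uniform in $x$ as well as in $\lambda$, and the bound of the proposition follows.
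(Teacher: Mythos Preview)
Your proposal is correct and, in spirit, the same approach the paper has in mind: the paper's entire proof is the single sentence ``The proof is immediate by the analyticity of $G(x,y,\lambda)$ in $\lambda$.'' What you have written is a careful unpacking of what that sentence actually rests on---hyperbolicity of the asymptotic matrices $\mathbb{A}_\pm(\lambda)$ on $\Omega_\vartheta$, the conjugation-lemma construction of decaying solution bases, nonvanishing of the Evans determinant by Hypothesis~\ref{H:spec-stab}, and then compactness in $\lambda$ to get uniformity. Your observation about the apparent singularity of $D_0^\varphi$ at zeros of $r$, and its resolution via Lemma~\ref{lem-NBexistence}(1), is a genuine point the paper leaves implicit; it is needed to ensure $\mathbb{A}(x,\lambda)$ is globally smooth in $x$ so that the dichotomy construction goes through uniformly. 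In short, you have supplied the justification the paper takes for granted, and there is no difference in strategy.
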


\begin{Proposition}[Low-frequency bound]\label{prop-resLF} There exists an $\eta_3 > 0$ sufficiently small such that, for all $\lambda$ with $|\lambda| < \eta_3$, we have the expansion
\[
\begin{aligned}
G(x,y,\lambda) &= \frac{1}{\lambda} \sum_{j=1}^2  \rme^{\nu^c(\lambda)|x|} V_j(x) \Big\langle \psi_j(y), \cdot \Big\rangle_{\mathbb{C}^2}   + \cO(\rme^{\nu^c(\lambda)|x-y|} ) + \cO(\rme^{-\eta|x-y|}),
\end{aligned}
\]
where
$$
\nu^c(\lambda) = -\frac{\lambda}{c_{\rmg}}+ \frac{\rmd\lambda^2}{c_\mathrm{g}^3} + \cO(\lambda^3).
$$ 
Here $c_\rmg>0$ is the group velocity defined in \eqref{def-cg}, $V_1,V_2$ are the eigenfunctions defined in \eqref{def-V12}, and the adjoint eigenfunctions satisfy $\psi_j(y) = \cO(\rme^{-\eta |y|}),$ for some fixed $\eta>0$.
\end{Proposition}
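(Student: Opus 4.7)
I would begin by recasting the eigenvalue problem $(\lambda-\mathcal{L})U=0$ as a first-order system $W_x=\mathbf{A}(x,\lambda)W$ on $\mathbb{C}^4$ with $W=(U,U_x)^\top$. By Lemma~\ref{lem-NBexistence}(2), $\mathbf{A}(x,\lambda)$ converges exponentially to constant matrices $\mathbf{A}_\pm(\lambda)$ as $x\to\pm\infty$. At $\lambda=0$ each $\mathbf{A}_\pm(0)$ has a double spatial eigenvalue at $\nu=0$ (since the essential spectrum of $\mathcal{L}$ touches the origin, as in Lemma~\ref{lem-Lspectrum}), together with two spatial eigenvalues bounded away from the imaginary axis. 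For small $\lambda\neq 0$ the double zero splits, and inverting the local dispersion relation \eqref{gv-evalue} in $\nu$ yields, at $+\infty$, one slow eigenvalue of the form $\nu^c(\lambda)=-\lambda/c_\mathrm{g}+\mathrm{d}\lambda^2/c_\mathrm{g}^3+\cO(\lambda^3)$ and, at $-\infty$, its mirror $-\nu^c(\lambda)$; under Hypothesis~\ref{H:spec-stab} and the assumption $c_\mathrm{g}>0$ these are the slow decaying rates at the respective ends, accounting for the factor $\rme^{\nu^c(\lambda)|x|}$ in the statement.

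Since $\nu^c(\lambda)\to 0$ as $\lambda\to 0$, the slow decaying subspace is not spectrally separated from the essential spectrum, and the usual analyticity argument for stable/unstable subspaces breaks down at the origin. I would therefore invoke the gap lemma of Gardner--Zumbrun and Kapitula--Sandstede to construct four Jost solutions $\phi_j^\pm(x,\lambda)$ that are analytic in $\lambda$ on a disk $|\lambda|<\eta_3$, with $\phi_1^+$ carrying the slow rate $\nu^c(\lambda)$ at $+\infty$, $\phi_2^+$ a fast decay rate, and analogously for the $\phi_j^-$ at $-\infty$. The resolvent kernel is then assembled by a standard Wronskian/Evans-function formula matching decaying modes across the jump at $x=y$: $G(x,y,\lambda)$ is a linear combination of $\phi_j^+(x,\lambda)$ for $x>y$ and of $\phi_j^-(x,\lambda)$ for $x<y$, with coefficients given by the inverse of a $4\times 4$ matching matrix $\mathcal{M}(\lambda)$ whose determinant equals, up to a nonvanishing analytic factor, the Evans function $E(\lambda)$.

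The third step is the Laurent expansion near $\lambda=0$. By Hypothesis~\ref{H:spec-stab} and Lemma~\ref{lem-Lspectrum}, $\mathcal{L}$ has a two-dimensional kernel spanned by $V_1,V_2$ with exponentially localized adjoint eigenfunctions $\psi_1,\psi_2$, forcing $E(\lambda)=E_0\lambda^2+\cO(\lambda^3)$ with $E_0\neq 0$. The adjugate of $\mathcal{M}(\lambda)$ vanishes to first order at $\lambda=0$ (because $\mathrm{rank}\,\mathcal{M}(0) = 2$), so $\mathcal{M}(\lambda)^{-1}$ carries only a simple pole, whose residue is computed by a standard duality argument---differentiating the Evans relation and identifying its derivatives with the pairings $\langle\psi_j,V_k\rangle$---and equals the spectral projection onto the generalized kernel,
\begin{equation*}
\operatorname*{Res}_{\lambda=0} G(x,y,\lambda) \;=\; \sum_{j=1}^{2} V_j(x)\,\big\langle \psi_j(y),\,\cdot\,\big\rangle_{\mathbb{C}^2}.
\end{equation*}
Pulling the slow factor $\rme^{\nu^c(\lambda)|x|}$ back out of the Jost solutions $\phi_1^\pm$ at leading order, pushing all further analytic $\lambda$-corrections into $\cO(\rme^{\nu^c(\lambda)|x-y|})$, and absorbing the fast-mode contributions into $\cO(\rme^{-\eta|x-y|})$ then produces the stated expansion.

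The main obstacle is the combined gap-lemma and residue computation. Because $V_1,V_2$ are bounded but not spatially localized, the leading term is already nonlocalized in $x$, and cleanly separating it from the remainder requires careful bookkeeping at every order in $\lambda$: one must track both the analytic continuation of the slow Jost mode across the essential spectrum (via a Kato basis for the center eigenvectors of $\mathbf{A}_\pm(\lambda)$) and the matching of these modes through $x=y$, with the exponential localization of $\psi_j(y)$---inherited from the adjoint Jost construction---providing the $y$-uniformity of the error terms. Once this bookkeeping is in place, the finite-dimensional identification of the residue as $\sum V_j\langle\psi_j,\cdot\rangle$ is routine.
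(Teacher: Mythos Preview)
Your approach is correct and closely parallel to the paper's, but the language and organizing device differ. The paper works in the exponential-dichotomy framework of \cite{BeckHupkesSandstede10,BeckSandstedeZumbrun10}: it constructs dichotomies $\Phi_\pm^{s,u}$ on each half-line via the conjugation lemma, then proves a separate meromorphic-extension lemma (Lemma~\ref{lem:mer-ext}) showing that $E_-^u(\lambda)$ is the graph of a map $h^+(\lambda):E_+^u(\lambda)\to E_+^s(\lambda)$ with $h^+(\lambda)=\frac{1}{\lambda}M+h_a^+(\lambda)$, where $M=(M^\psi(0))^{-1}$ and $(M^\psi(0))_{ij}=\langle\psi_i,V_j\rangle_{L^2}$. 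The resolvent kernel on the full line is then assembled by the explicit gluing formula \eqref{E:res_ker}, and the proposition is read off by a case-by-case analysis in the regions $0\le y\le x$, $y\le x\le 0$, $y\le 0\le x$.

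What you do instead---Jost solutions via the gap lemma, a $4\times4$ matching matrix $\mathcal{M}(\lambda)$ with $\det\mathcal{M}\sim E_0\lambda^2$, and the observation that $\mathrm{rank}\,\mathcal{M}(0)=2$ forces $\mathrm{adj}\,\mathcal{M}(\lambda)=\cO(\lambda)$ so that $\mathcal{M}^{-1}$ has only a simple pole---is the Zumbrun--Howard Evans-function route and yields the same information. The two constructions are essentially dual: your $\mathcal{M}(\lambda)^{-1}$ encodes the same data as the paper's $h^\pm(\lambda)$, and your residue identification $\mathrm{Res}_{\lambda=0}G=\sum V_j\langle\psi_j,\cdot\rangle$ corresponds to the paper's explicit formula $h_p^+(\lambda)=\frac{1}{\lambda}(M^\psi(0))^{-1}$. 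The paper's dichotomy language makes the case splitting and the appearance of the slow factor $\rme^{\nu^c(\lambda)|x|}$ slightly more mechanical (it comes directly from $\Phi_+^s(x,0,\lambda)W_j(0)=\rme^{\nu_+^c(\lambda)x}W_j(x)+\cO(\lambda\rme^{\nu_+^c(\lambda)x})$), whereas in your formulation this step---``pulling the slow factor back out of the Jost solutions''---is where the bookkeeping you flag as the main obstacle lives. Either route is adequate here; the paper's choice is motivated by its reliance on \cite{BeckHupkesSandstede10,BeckSandstedeZumbrun10} for the extension lemma.
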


%%%%%%%%%%%%%%%%%%%%%%%%%%%%%%%%%%

\subsection{Spatial eigenvalues}
Let us first consider the linearized eigenvalue problem
\begin{equation}\label{eqs-evalues} 
(\lambda - \mathcal{L}) U= 0
\end{equation}
and derive necessary estimates on behavior of solutions as $x \to \pm \infty$. We write the eigenvalue problem \eqref{eqs-evalues} as a four-dimensional first order ODE system. For simplicity, let us denote 
$$B_0 (x,\lambda): = D_2^{-1}\Big[  \lambda \mathbb{I} -  D_0(x) - D_0^\varphi(x) \Big], \qquad C_0: = D_2^{-1} D_1 = \begin{pmatrix} 0 & 1 \\ -1 & 0\end{pmatrix}
$$
where $\mathbb{I}$ denotes the $2\times 2$ identity matrix and $D_{0,1,2}$ are defined in \eqref{E:defD}. Let $W = (U,U_x)$ be the new variable. By \eqref{def-L}, the eigenvalue problem \eqref{eqs-evalues} then becomes 
\begin{equation}\label{eqW}
W_x = \mathcal{A}(x,\lambda)W, \qquad   \mathcal{A}(x,\lambda): = \begin{pmatrix} 0 & \mathbb{I} \\  B_0(x,\lambda) & 2\varphi_x C_0 \end{pmatrix}.
\end{equation}	
Let $\mathcal{A}_\pm(\lambda)$ be the asymptotic limits of $\mathcal{A}(x,\lambda)$ at $x = \pm \infty$, and let 
$$B_0 (\lambda): = D_2^{-1}\Big[  \lambda \mathbb{I} - D_0^\infty \Big].
$$
so that, by Lemma~\ref{lem-NBexistence}, $B_0(x,\lambda) \to B_0(\lambda)$ as $x\to \pm\infty$. Thus, we have
$$ \mathcal{A}_\pm (\lambda) = \begin{pmatrix} 0 & \mathbb{I} \\ B_0(\lambda) & \pm 2 k_0 C_0 \end{pmatrix}.$$
The solutions of the limiting ODE system $W_x = \mathcal{A}_\pm(\lambda)W$ are of the form $W_\infty(\lambda)\rme^{\nu_\pm(\lambda) x}$, where $W_\infty= (w, \nu_\pm(\lambda) w)$, $w\in \mathbb{C}^2$, and $\nu_\pm(\lambda)$ are the eigenvalues of $ \mathcal{A}_\pm (\lambda)$. These eigenvalues are often referred to as spatial eigenvalues, to distinguish them from the temporal eigenvalue parameter $\lambda$, and they satisfy
\begin{equation}
\label{eqs-nu} \det (B_0(\lambda) \pm 2k_0 \nu_\pm(\lambda) C_0 - \nu_\pm^2(\lambda) \mathbb{I} )  =0. 
\end{equation}
The behavior of these spatial eigenvalues as functions of $\lambda$, which determines the key properties of the resolvent kernel, can be understood by considering the limiting cases $\lambda \to 0$ and $|\lambda| \to \infty$, along with the intermediate regime between these limits. The easiest cases are the mid- and high-frequency regimes.

%%%%%%%%%%%

\subsection{Mid- and high-frequency resolvent bounds} \label{sec-HF}

\begin{proof}[Proof of Proposition~\ref{prop-resHF}] The spatial eigenvalues and resolvent kernel can be analyzed in this regime using the following scaling argument. Define
$$\tilde x =|\lambda|^{1/2}x, \quad \tilde \lambda =|\lambda|^{-1}\lambda,\qquad \widetilde W(\tilde x) = W (|\lambda|^{-1/2} x) .$$
In these scaled variables, \eqref{eqW} becomes
\begin{equation}\label{eqtW} 
\widetilde W_{\tilde x} = \tilde{\mathcal{A}}(\tilde \lambda) \widetilde W + \cO(|\lambda|^{-1/2} \widetilde W)  \quad \text{with}\quad \tilde{\mathcal{A}}(\tilde \lambda) = \begin{pmatrix} 
0&0&1&0\\
0&0&0&1\\
\frac{\tilde \lambda}{1+\alpha^2} & \frac{\alpha \tilde \lambda}{1+\alpha^2} &0& 0\\
-\frac{\alpha \tilde \lambda }{1+\alpha^2} & \frac{\tilde \lambda}{1+\alpha^2} & 0 & 0
 \end{pmatrix}.
 \end{equation}
The eigenvalues of $\tilde{\mathcal{A}}(\tilde \lambda)$ are 
\[
\pm \sqrt{\frac{\tilde \lambda (1\pm i \alpha)}{(1+\alpha^2)}}. 
\]
For $\lambda \in \Omega_\vartheta$, $\tilde \lambda$ is on the unit circle and bounded away from the negative real axis. Therefore, there exists some fixed $\eta>0$ so that the real part of $\sqrt{\tilde \lambda (1\pm i \alpha)}$ is greater than $\eta$ for all $\lambda \in \Omega_\vartheta$ as $|\lambda| \to \infty$. See Figure~\ref{fig-spatial-evalues}a.  

\begin{figure}[h]
\centering\includegraphics[scale=.35]{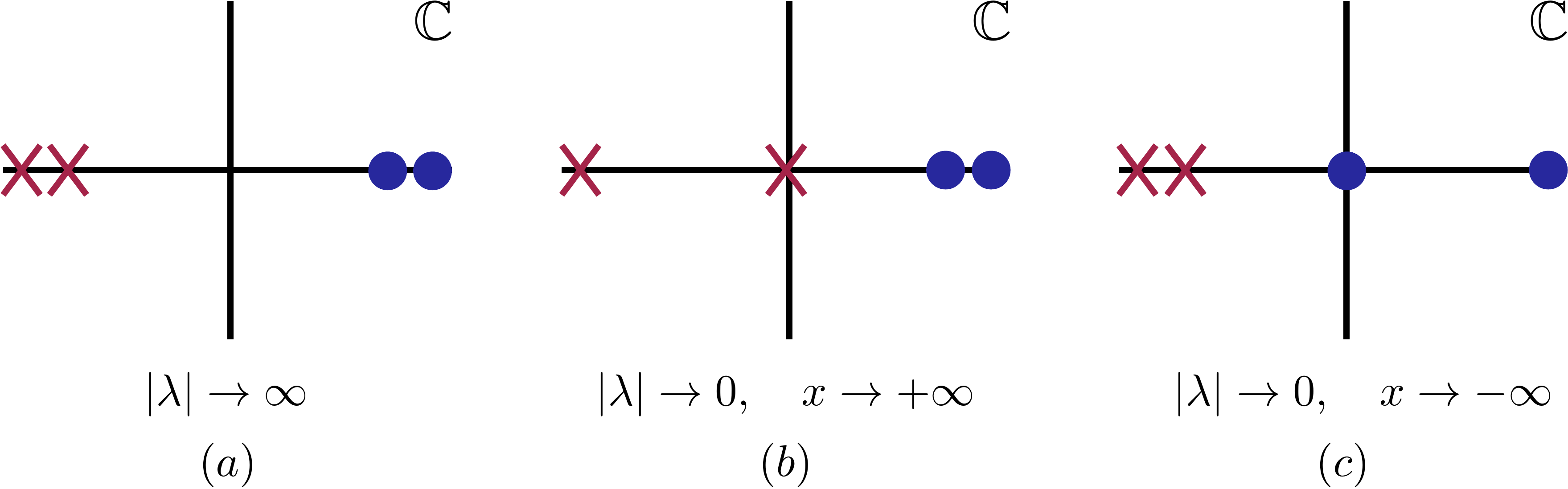} 
\caption{\em Illustration of the spatial eigenvalues (depicted on the real axis for convenience). For $|\lambda| \to \infty$ they are well-separated from the imaginary axis. When $|\lambda| \to 0$ (see \S~\ref{sec-ResLFbound}), one spatial eigenvalue, determined by the spatial limit $x \to \pm\infty$, approaches the origin.}
\label{fig-spatial-evalues}
\end{figure}

Furthermore, we can use this asymptotic behavior to determine information about the resolvent kernel. Let $\tilde P^{s,u}(\tilde \lambda)$ be the eigenprojections associated with $\tilde{\mathcal{A}}(\tilde \lambda)$ onto the stable and unstable subspaces, which are smooth with respect to $\lambda$. It we define $\widetilde W^{s,u} (\tilde x)= \tilde P^{s,u}(\tilde \lambda) \widetilde W(\tilde x)$, then
\begin{equation}
\label{Wpm-flow} \begin{pmatrix} \widetilde W^s\\\widetilde W^u \end{pmatrix}_{\tilde x} = \begin{pmatrix} \tilde{\mathcal{A}}^s(\tilde \lambda) & 0 \\ 0& \tilde{\mathcal{A}}^u(\tilde \lambda) \end{pmatrix}   \begin{pmatrix} \widetilde W^s\\\widetilde W^u \end{pmatrix} +   \cO(|\lambda|^{-1/2} \widetilde W)
\end{equation}
where the matrices $\tilde{\mathcal{A}}^{s,u}(\tilde \lambda)$ satisfy
$$ \Re \langle \tilde{\mathcal{A}}^s(\tilde \lambda)  W,W \rangle_{\mathbb{C}^4} \le  - \eta |W| ^2,\qquad  \Re \langle \tilde{\mathcal{A}}^u(\tilde \lambda)  W,W \rangle_{\mathbb{C}^4} \ge   \eta |W| ^2 ,$$
for all $W\in \mathbb{C}^4$. Here, $\langle \cdot,\cdot\rangle_{\mathbb{C}^4}$ denotes the usual inner product in $\mathbb{C}^4$. Taking the inner product of equation \eqref{Wpm-flow} with $(\widetilde W^s,\widetilde W^u)$, we get 
$$\begin{aligned}
 \frac 12 \frac{d}{d\tilde x} |\widetilde W^s |^2 &\quad \le\quad  -\eta |\widetilde W^s|^2  + \cO(|\lambda|^{-1/2} |\widetilde W|^2) 
 \\
  \frac 12 \frac{d}{d\tilde x} |\widetilde W^u |^2 &\quad \ge~\quad   \eta |\widetilde W^u|^2  + \cO(|\lambda|^{-1/2} |\widetilde W|^2) .
  \end{aligned}$$
This shows that for $|\lambda|$ is sufficiently large, the stable solutions decay faster than $\rme^{-\eta |\tilde x|}$ and the unstable solutions grow at least as fast as $\rme^{\eta|\tilde x|}$. Thus, the resolvent kernel $\widetilde G_W(\tilde x,\tilde y,\tilde \lambda)$ associated with the scaled equation \eqref{eqtW} satisfies the uniform bound 
$$ |\widetilde G_W(\tilde x,\tilde y,\tilde \lambda)| \le C \rme^{-\eta |\tilde x-\tilde y|}.$$
Going back to the original variables, the resolvent kernel associated with \eqref{eqW} satisfies 
$$ | G_W(x,y, \lambda)| \le C \rme^{-\eta |\lambda|^{1/2} |x-y|}.$$
The resolvent kernel $G(x,y,\lambda)$ is by definition is the two-by-two upper-left block of the matrix $G_W(x,y,\lambda)$. This proves the proposition.    \end{proof}

\begin{proof}[Proof of Proposition~\ref{prop-resMF}] The proof is immediate by the analyticity of $G(x,y,\lambda)$ in $\lambda$. \end{proof}

%%%%%%%

\subsection{Low frequency resolvent bounds via exponential dichotomies}\label{sec-ResLFbound}

As $\lambda$ approaches the origin, it approaches the boundary of the essential spectrum, due to Lemma~\ref{lem-Lspectrum}. Here the asymptotic matrices $\mathcal{A}_\pm(\lambda)$ lose hyperbolicity, and there is one spatial eigenvalue from each $+\infty$ and $-\infty$ that approaches zero. Equation \eqref{eqs-nu} implies that this eigenvalue satisfies $\nu_\pm(\lambda) = \mathcal{O}(\lambda)$.  More precisely, we have
\begin{equation}\label{E:eval_exp}
\nu_\pm(\lambda) = -\lambda/c_{\rmg}^\pm + \frac{\rmd \lambda^2}{c_\mathrm{g}^3} + \cO(\lambda^23,
\end{equation}
where $c_\rmg^\pm$ is defined as in \eqref{def-cg}. Since $c_\rmg^+ >0$ and $c_\rmg^- < 0$, we see  that, at $+\infty$, there are three spatial eigenvalues that are bounded away from the imaginary axis as $\lambda \to 0$, two of which have positive real part and one of which has negative real part, whereas at $-\infty$ there are three spatial eigenvalues that are bounded away from the imaginary axis as $\lambda \to 0$, two of which have negative real part and one of which has positive real part. See Figures~\ref{fig-spatial-evalues}b and~\ref{fig-spatial-evalues}c.

Consider the positive half line $x \geq 0$. Let $\nu_+^{s}(\lambda)$ denote the spatial eigenvalue, coming from $+\infty$, that has negative real part bounded away from zero for all $\lambda > 0$ and hence corresponds to a strong-stable direction. Similarly, let $\nu_+^{c}(\lambda)$ denote the spatial eigenvalue, coming from $+\infty$, that has negative real part that is $\mathcal{O}(\lambda)$ as $\lambda \to 0$, and hence corresponds to a center-stable direction. We then have $\nu_+^{ s}(\lambda)  =  -\eta + \cO(\lambda) $ for some $\eta >0$ and $\nu_+^c(\lambda) = -\lambda/ c_{\rmg}+\mathcal{O}(\lambda^2)$ as $\lambda \to 0$. A direct application of the conjugation lemma (see, for example, \cite{ZH}) shows that a basis of bounded solutions to equation \eqref{eqW} on the positive half line $\mathbb{R}^+$ consists of 
\begin{equation}\label{E:poshalfline}
\rme^{\nu_+^c(\lambda) x } W_+^c (x,\lambda) , \qquad \rme^{\nu_+^s(\lambda) x} W_+^s(x,\lambda) 
\end{equation}
where $W_+^j(x,\lambda)$  approaches $W_\infty^j(\lambda)$ exponentially fast as $ x \to +\infty$ for each $j = c,  s$. The eigenfunctions $V_{1,2}$, defined in \eqref{def-V12}, correspond to solutions $W_{1,2}$ of $\eqref{eqW}$ for $\lambda = 0$. Since,
\[
\lim_{x\to\pm\infty}W_1(x) = \begin{pmatrix} 0 \\ r_0k_0 \\ 0 \\ 0 \end{pmatrix}, \qquad \lim_{x\to\pm\infty}W_1(x) = \begin{pmatrix} 0 \\ \pm r_0 \\ 0 \\ 0 \end{pmatrix}
\]
we see that
\[
W_{1,2}(x) = a^c_{1,2} \rme^{\nu_+^c(0) x } W_+^c (x,0)  + a^s_{1,2} \rme^{\nu_+^s(0) x } W_+^s (x,0), \qquad x \geq 0, 
\]
where $a^c_{1,2} \neq 0$. There are also corresponding solutions $\rme^{ - \nu_+^j(\lambda) y } \Psi_+^j(y,\lambda)$ of the adjoint equation associated with \eqref{eqW},
\begin{equation}\label{E:adj}
W^*_x = - \mathcal{A}(x, \bar \lambda)^T W^*.
\end{equation} 
For $\lambda = 0$, the adjoint eigenfunctions are denoted by $\Psi_{1,2}(x)$, and they are related to the adjoint eigenfunctions $\psi_{1,2}$ via $(\Psi_{1,2})_2 = D_2^T \psi_{1,2}$, where $(\Psi_{1,2})_2$ denotes the second component of $\Psi_{1,2}$.

The four-dimensional ODE associated with equation \eqref{eqs-resG} is just the system \eqref{eqW} with an additional nonautonomous term corresponding to the Dirac delta function. Let $\Phi_+^{s,u}$ denote the associated exponential dichotomy. In other words, $\Phi_+^s(x,y,\lambda)$ decays to zero exponentially fast as $x, y \to \infty$, for $x \geq y \geq 0$, and $\Phi_+^u(x,y,\lambda)$ decays to zero exponentially fast as $x, y \to \infty$, for $y \geq x \geq 0$.
The resolvent kernel, which solves \eqref{eqs-resG}, on the positive half line corresponds to the upper-left two-by-two block of $\Phi_+^{s,u}$. Equation \eqref{E:poshalfline} implies that $\Phi_+^\mathrm{s}(x,y,\lambda)$ can be written
\begin{eqnarray}\label{def-Phis+}
\Phi_+^\mathrm{s}(x,y,\lambda)  &= & \rme^{\nu_+^c(\lambda)(x-y)} W_+^c(x,\lambda)\langle \Psi_+^c(y,\lambda), \cdot \rangle + \rme^{\nu_+^{ s}(\lambda)(x-y)} W_+^{ s}(x,\lambda)\langle \Psi_+^{ s}(y,\lambda), \cdot \rangle 
\end{eqnarray}
for $0 \leq y \leq x $, where all terms are analytic in $\lambda$. Here $\langle \cdot,\cdot\rangle$ denotes the usual inner product in $\mathbb{C}^4$. Similarly, we have $\Phi_+^\mathrm{u}(x,y,\lambda)  =  \mathcal{O}(\rme^{-\eta|x-y|}) $ for $0 \leq x \leq y$. A similar construction can be obtained for the negative half line using the unstable spatial eigenvalues $\nu_-^{u,c}(\lambda)$.

We need to extend the exponential dichotomy so that it is valid not just on the half line, but on the entire real line. This extension will not be analytic, as there is an eigenvalue at the origin, which will correspond to a pole in the resolvent kernel. However, we can construct the extension so that it is meromorphic. The strategy will be similar to that of \cite[\S4.4]{BeckHupkesSandstede10} and \cite[\S4.2]{BeckSandstedeZumbrun10}. 

Write
\[
E_+^\mathrm{j}(\lambda) := \mbox{Rg}\Phi_+^j(0,0,\lambda), \quad E_-^\mathrm{j}(\lambda) := \mbox{Rg}\Phi_-^j(0,0,\lambda), \quad j = s,u.
\]
And note that $\mbox{span}\{ W_1, W_2\} = E_+^\mathrm{s}(0)$. Similarly, since $\mbox{span}\{ W_1, W_2\} = E_-^\mathrm{u}(0)$. Next, set
\[
E_0^\mathrm{pt} := \mbox{span}\{ W_1(0), W_2(0)\}, \quad E_0^\psi := \mbox{span}\{ \Psi_1(0), \Psi_2(0)\},
\]
so that $E_0^\mathrm{pt} \oplus E_0^\psi = \mathbb{C}^4$. The following lemma is analogous to \cite[Lemma 4.10]{BeckHupkesSandstede10} and \cite[Lemma 6]{BeckSandstedeZumbrun10}, and more details of the proof can be found in those papers.

\begin{Lemma}\label{lem:mer-ext}
There exists an $\epsilon > 0$ sufficiently small such that, for each $\lambda \in B_\epsilon(0)\setminus \{0\}$, there exists a unique map $h^+(\lambda): E_+^u(\lambda) \to E_+^s(\lambda)$ such that $E_-^u(\lambda) = \mathrm{graph} h^+(\lambda)$. Furthermore, this mapping is of the form
\begin{equation}\label{def-hp}
h^+(\lambda): E_0^\psi \to E_0^\mathrm{pt}, \qquad h^+(\lambda) = h_p^+(\lambda) + h_a^+(\lambda),
\end{equation}
where $h^+_a$ is analytic for $\lambda \in B_\epsilon(0)\setminus \{0\}$ and 
\begin{equation}\label{def-hp2}
h_p^+ (\lambda) W^\psi = \frac{1}{\lambda} M W^\psi
\end{equation}
with respect to the bases $\{W_1(0), W_2(0)\}$ and $\{\Psi_1(0), \Psi_2(0)\}$, where $M = (M^\psi(0))^{-1}$ and $M^\psi(0)$ is defined in \eqref{E:def_mij}. Similarly, for each $\lambda \in B_\epsilon(0)\setminus \{0\}$, there is a unique map $h^-(\lambda) : E_-^s(\lambda) \to E_-^u(\lambda)$ so that $E_+^s(\lambda) = \mathrm{graph}h^-(\lambda)$. This map has a meromorphic representation analogous to the one given above for $h_+(\lambda)$, with $h^-_p (\lambda) = -h^+_p (\lambda)$.
\end{Lemma}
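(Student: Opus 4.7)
The strategy is to exploit spectral stability (Hypothesis~\ref{H:spec-stab}) to obtain transversality of $E_+^s(\lambda)$ and $E_-^u(\lambda)$ away from the origin, construct an analytic basis of $E_-^u(\lambda)$ that collapses into $E_+^s(0)$ at $\lambda=0$, and extract the $1/\lambda$ pole from the first-order vanishing of the $E_+^u(\lambda)$-component of this basis.

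For existence and uniqueness, note first that at $\lambda=0$ the two-dimensional subspaces $E_+^s(0)$ and $E_-^u(0)$ coincide with $E_0^{\mathrm{pt}}$ and hence fail to be transverse. However, Hypothesis~\ref{H:spec-stab} ensures there is no point spectrum in a punctured neighborhood of the origin, which is equivalent (via the standard Evans-function/matching-of-subspaces characterization) to $E_+^s(\lambda) \cap E_-^u(\lambda) = \{0\}$ for $\lambda \in B_\epsilon(0) \setminus \{0\}$ and $\epsilon$ small. Combined with $\dim E_+^s(\lambda)=\dim E_-^u(\lambda)=2$, this gives the direct-sum decomposition $\mathbb{C}^4 = E_+^s(\lambda) \oplus E_-^u(\lambda)$, while $\mathbb{C}^4 = E_+^s(\lambda) \oplus E_+^u(\lambda)$ holds throughout $B_\epsilon(0)$ from the dichotomy construction. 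The projection $\pi^u : E_-^u(\lambda) \to E_+^u(\lambda)$ along $E_+^s(\lambda)$ is therefore an isomorphism for $\lambda \neq 0$, and $h^+(\lambda) := \pi^s \circ (\pi^u|_{E_-^u(\lambda)})^{-1}$ is the unique graph representation. Analyticity of the dichotomy projections in $\lambda$ transfers to $h^+(\lambda)$ on the punctured disk.

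To expose the pole and identify the fixed domain/codomain, I would use the splitting $\mathbb{C}^4 = E_0^{\mathrm{pt}} \oplus E_0^\psi$ (which holds by construction), under which the $\lambda$-dependent projections $E_+^s(\lambda) \to E_0^{\mathrm{pt}}$ along $E_0^\psi$ and $E_+^u(\lambda) \to E_0^\psi$ along $E_0^{\mathrm{pt}}$ are isomorphisms at $\lambda=0$ and thus in a neighborhood; this realizes $h^+(\lambda)$ as a map $E_0^\psi \to E_0^{\mathrm{pt}}$ in fixed reference coordinates. Next, choose an analytic family of bases $\{W_j^-(\lambda)\}_{j=1,2}$ of $E_-^u(\lambda)$ with $W_j^-(0) = W_j(0)$, which exists because $\lambda \mapsto E_-^u(\lambda)$ is analytic and $\{W_1(0),W_2(0)\}$ spans $E_-^u(0)$. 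Decompose $W_j^-(\lambda) = V_j^s(\lambda) + V_j^u(\lambda)$ with $V_j^s \in E_+^s(\lambda)$ and $V_j^u \in E_+^u(\lambda)$, both analytic. Since $W_j(0) \in E_0^{\mathrm{pt}} = E_+^s(0)$, we have $V_j^u(0)=0$, hence $V_j^u(\lambda) = \lambda \tilde V_j^u(\lambda)$ with $\tilde V_j^u$ analytic and $V_j^s(0) = W_j(0)$. The defining relation $h^+(\lambda) V_j^u(\lambda) = V_j^s(\lambda)$, written through the reference projections, becomes $\hat h^+(\lambda)\bigl(\lambda [\tilde V_j^u(\lambda)]_\psi\bigr) = [V_j^s(\lambda)]_{\mathrm{pt}}$. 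Inverting the $2\times 2$ matrix $\bigl([\tilde V_j^u(0)]_\psi\bigr)_{j}$, which is nonsingular because $\{\tilde V_j^u(0)\}$ spans $E_+^u(0)$ and the projection onto $E_0^\psi$ is an isomorphism there, yields the advertised $\lambda^{-1}M$ leading behavior plus an analytic remainder $h_a^+$.

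The companion statement for $h^-$ is entirely symmetric (swap the roles of $\pm$ half-lines and of stable/unstable subspaces); the identity $h_p^-(\lambda) = -h_p^+(\lambda)$ comes from the fact that both polar parts measure the same leading-order opening between $E_+^s(\lambda)$ and $E_-^u(\lambda)$ along $E_0^\psi$, computed with opposite sign conventions on the two sides. The main obstacle is the explicit identification of $M$ with $(M^\psi(0))^{-1}$ as defined by \eqref{E:def_mij}: one must show that $[\tilde V_j^u(0)]_\psi$ is represented in the basis $\{\Psi_1(0),\Psi_2(0)\}$ by the inner-product matrix $M^\psi(0)$. This is a Melnikov/variation-of-parameters computation, carried out by differentiating the defining relation for $W_j^-(\lambda)$ in $\lambda$, pairing against the exponentially localized adjoint eigenfunctions (whose decay guarantees convergent integrals), and recognizing the result as $M^\psi(0)$; the bookkeeping between the analytic bases, the projections onto the reference spaces, and the adjoint pairings is the delicate step, and is handled as in \cite{BeckHupkesSandstede10,BeckSandstedeZumbrun10}.
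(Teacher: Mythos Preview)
Your approach is essentially the same as the paper's: both arguments represent the three subspaces $E_-^u(\lambda)$, $E_+^s(\lambda)$, $E_+^u(\lambda)$ in the fixed reference splitting $E_0^{\mathrm{pt}}\oplus E_0^\psi$, observe that the $E_0^\psi$-component of $E_-^u(\lambda)$ vanishes to first order at $\lambda=0$, and extract the $1/\lambda$ pole by inverting the resulting linear-in-$\lambda$ matrix. The paper carries this out by parametrizing each subspace as an explicit graph ($\tilde W^{\mathrm{pt}}+\lambda h^\psi(\lambda)\tilde W^{\mathrm{pt}}$, etc.) and solving one algebraic matching equation; you carry it out by choosing an analytic basis $W_j^-(\lambda)$ of $E_-^u(\lambda)$ and decomposing it along $E_+^s(\lambda)\oplus E_+^u(\lambda)$. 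These are two phrasings of the same computation: your $[\tilde V_j^u(0)]_\psi$ is exactly the paper's $M^\psi(0)=h^\psi(0)-g^\psi(0)$.

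There is, however, one genuine gap. You assert that the matrix $\bigl([\tilde V_j^u(0)]_\psi\bigr)_j$ is nonsingular ``because $\{\tilde V_j^u(0)\}$ spans $E_+^u(0)$,'' but this spanning statement is precisely the nondegeneracy condition that needs to be proved; it does \emph{not} follow from the transversality $E_+^s(\lambda)\cap E_-^u(\lambda)=\{0\}$ for $\lambda\neq 0$, which is all that Hypothesis~\ref{H:spec-stab} gives you directly. If $\tilde V_1^u(0),\tilde V_2^u(0)$ failed to span, the pole of $h^+$ would have order $\ge 2$ rather than $1$, corresponding to the zero eigenvalue having algebraic multiplicity $>2$ in the weighted space. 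The paper closes this gap by first identifying the matrix with $M^\psi(0)_{ij}=\int_\mathbb{R}\langle\psi_i,V_j\rangle\,dy$ via the Melnikov computation you allude to, and then invoking \cite[Corollary~4.6]{SandstedeScheel04} together with Hypothesis~\ref{H:spec-stab} to conclude invertibility (that corollary says invertibility of $M^\psi(0)$ is equivalent to the zero eigenvalue having multiplicity exactly two). In your write-up the logical order is inverted: you use nonsingularity before identifying the matrix, so the argument as stated is circular. The fix is simply to perform the Melnikov identification first and then cite the same external result.
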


\begin{proof}
In this proof we will use the coordinates $(W^\mathrm{pt}, W^\psi) \in E_0^\mathrm{pt} \oplus E_0^\psi$. Because the exponential dichotomies $\Phi_\pm^\mathrm{s}(x,y,\lambda)$ and $\Phi_\pm^\mathrm{u}(x,y,\lambda)$ are analytic for all $x \geq y \geq 0$ and $y \geq x \geq 0$, respectively, there exist functions $h^\psi(\lambda)$, $g^{\psi, \mathrm{pt}}(\lambda)$ that are analytic for all $\lambda$ near zero and such that
\begin{eqnarray*}
E_-^\mathrm{u}(\lambda): & \quad &
\tilde{W} =  \tilde{W}^\mathrm{pt} + \lambda h^{\psi}(\lambda) \tilde{W}^\mathrm{pt} \nonumber \\ 
E_+^\mathrm{s}(\lambda): & \quad &
W = W^\mathrm{pt} + \lambda g^{\psi}(\lambda)W^\mathrm{pt} \\ \nonumber
E_+^\mathrm{u}(\lambda): & \quad &
W  =  W^\psi + g^{\mathrm{pt}}(\lambda) W^\psi. 
\end{eqnarray*} 
The superscripts indicate the range of the associated function. For example, $g^\psi(\lambda)W^\mathrm{pt} \in E_0^\psi$ for all $W^\mathrm{pt} \in E_0^\mathrm{pt}$. We want to write $E_-^\mathrm{u}(\lambda)$ as the graph of a function $h_+(\lambda): E_+^\mathrm{u}(\lambda) \to E_+^\mathrm{s}(\lambda)$. This requires 
\[
\underbrace{ \tilde{W}^\mathrm{pt} + \lambda h^{\psi}(\lambda)\tilde{W}^\mathrm{pt}}_{\in E_-^\mathrm{u}}  = 
\underbrace{\left[W^\psi + g^{\mathrm{pt}}(\lambda) W^\psi\right]}_{\in E_+^\mathrm{u}}
+ \underbrace{\left[W^\mathrm{pt} + \lambda g^{\psi}(\lambda) W^\mathrm{pt}\right]}_{\in E_+^\mathrm{s}},
\]
where for each $\tilde W^\mathrm{pt}$ we need to write $W^\mathrm{pt}$ in terms of $W^\psi$ 
so that the above equation holds. In components, 
\[
\tilde{W}^\mathrm{pt} = W^\mathrm{pt} + g^{\mathrm{pt}}(\lambda) W^\psi, \quad \lambda h^{\psi}(\lambda) \tilde{W}^\mathrm{pt}= W^\psi + \lambda g^{\psi}(\lambda)W^\mathrm{pt}, 
\]
which implies
\[
\lambda h^{\psi}(\lambda) \left(W^\mathrm{pt} + g^{\mathrm{pt}}(\lambda) W^\psi\right) = W^\psi + \lambda g^{\psi}(\lambda)W^\mathrm{pt}. 
\]
Rearranging the terms in this equation, we find
\begin{equation} \label{E:one}
\lambda\left(h^{\psi}(\lambda)-  g^{\psi}(\lambda)\right) W^\mathrm{pt} = \left( 1 - \lambda h^{\psi}(\lambda)g^{\mathrm{pt}}(\lambda)\right) W^\psi.
\end{equation}
We need to consider 
\[
M^\psi(\lambda) := h^{\psi}(\lambda)-  g^{\psi}(\lambda): E^\mathrm{pt}_0 \to E^\psi_0.
\]
For the moment, assume that $M^\psi(0)$ is invertible, with 
\begin{equation}\label{E:mpsi-inv}
M^\mathrm{pt}(0) := M^\psi(0)^{-1}: E_0^\psi \to E_0^\mathrm{pt}, 
\end{equation}
where $(M^\psi(0))_{ij}$ is given below in \eqref{E:def_mij}.
We then know that $M^\psi(\lambda)$ is invertible for $\lambda$ sufficiently near zero, and we can write
$M^\mathrm{pt}(\lambda) = M^\mathrm{pt}(0) + \lambda \tilde{M}^\mathrm{pt}(\lambda)$, where $\tilde{M}^\mathrm{pt}(\lambda)$ is analytic in $\lambda$. Hence, (\ref{E:one}) can be solved via
\begin{eqnarray*}
W^\mathrm{pt} &=& \frac{1}{\lambda} \left(M^\mathrm{pt}(0) + \lambda \tilde{M}^\mathrm{pt}(\lambda)\right) \left( 1 - \lambda h^{\psi}(\lambda)g^{\mathrm{pt}}(\lambda)\right) W^\psi \\
&=& \frac{1}{\lambda} M^\mathrm{pt}(0)V^\psi + [\tilde{M}^\mathrm{pt}(\lambda)(1 - \lambda h^{\psi}(\lambda)g^{\mathrm{pt}}(\lambda)) -M^\mathrm{pt}(0)h^{\psi}(\lambda)g^{\mathrm{pt}}(\lambda)]W^\psi + \mathcal{O}(\lambda)W^\psi \\
&=& \Big(\frac{1}{\lambda} M^\mathrm{pt}(0) + \tilde{h}_+(\lambda)\Big)W^\psi, 
\end{eqnarray*}
where $\tilde{h}_+$ is analytic in $\lambda$. 

It only remains to justify (\ref{E:mpsi-inv}). We have that $M^\psi(0) = h^\psi(0) - g^\psi(0)$, where $h^\psi$ and $g^\psi$ represent the graphs of $E_-^u$ and $E_+^s$, respectively. Following the same argument as in the proof of \cite[Lemma 6]{BeckSandstedeZumbrun10}, we find that
\[
\langle \Psi_i(0), (h^\psi(0) - g^\psi(0))W_j(0)\rangle = \int_\mathbb{R} \langle \Psi_i(y), \begin{pmatrix} 0 & 0 \\ D_2^{-1} & 0 \end{pmatrix} W_j(y) \rangle \rmd y.
\]
Recall that the first component of $W_j(y)$ corresponds to the eigenfunctions $V_{j}$ defined in (\ref{def-V12}), and $(D_2^{-1})^T (\Psi_{j})_2$ corresponds to $\psi_j$. Thus, we find that
\begin{equation} \label{E:def_mij}
(M^\psi(0))_{i,j} = \int_\mathbb{R} \langle \psi_i(y),V_j(y)\rangle \rmd y .
\end{equation}
It now follows from \cite[Corollary 4.6]{SandstedeScheel04} in conjunction with Hypothesis \ref{H:spec-stab} that $M^\psi(0)$ is invertible: as shown there, invertibility of $M^\psi(0)$ encodes precisely the property that the linearization about the source in an appropriately weighted space has zero as an eigenvalue with multiplicity two.
\end{proof}

Following \cite{BeckHupkesSandstede10, BeckSandstedeZumbrun10}, the meromorphic extension of the exponential dichotomy for $x > y$ is then given by 
\begin{equation}\label{E:res_ker}
\Phi(x,y, \lambda) := \left\{\begin{array}{lcl}
\tilde{\Phi}^\mathrm{s}_+(x,y,\lambda) & & 0\leq y< x \\[1ex]
\tilde{\Phi}_+^\mathrm{s}(x,0,\lambda)\tilde{\Phi}_-^\mathrm{s}(0,y, \lambda) & & y<0\leq x \\[1ex]
\tilde{\Phi}^\mathrm{s}_-(x,y,\lambda) & & y< x <0
\end{array}\right.
\end{equation}
with 
\begin{equation*}
\begin{array}{rclcl}
\tilde{\Phi}_+^\mathrm{s}(x,y,\lambda) & := & \Phi_+^\mathrm{s}(x,y,\lambda) - \Phi_+^\mathrm{s}(x,0,\lambda) h^+(\lambda) \Phi_+^\mathrm{u}(0,y,\lambda)  & & 0\leq y\leq x \\[1ex]
\tilde{\Phi}_-^\mathrm{s}(x,y,\lambda) & := &  \Phi_-^\mathrm{s}(x,y,\lambda) - \Phi_-^\mathrm{u}(x,0,\lambda) h^-(\lambda) \Phi_-^\mathrm{s}(0,y,\lambda) & & y\leq x\leq0
\end{array}
\end{equation*}
and \begin{equation}\label{bound-Phisu}
\begin{array}{rclcl}
\Phi_+^\mathrm{s}(x,y,\lambda)  &= & \rme^{\nu_+^c(\lambda)(x-y)} W_+^c(x,\lambda)\langle \Psi_+^c(y,\lambda), \cdot \rangle + \mathcal{O}(\rme^{-\eta|x-y|}) & & 0\leq y\leq x \\
\Phi_-^\mathrm{s}(x,y,\lambda) & = & \mathcal{O}(\rme^{-\eta|x-y|}) & &  y\leq x \leq 0 \\
\Phi_+^\mathrm{u}(0,y,\lambda) & = & \mathcal{O}(\rme^{-\eta|y|}) & & 0\leq y.
\end{array}
\end{equation}
We are now ready to derive pointwise bounds on the resolvent kernel for $\lambda \in B_\epsilon(0)$.

\begin{proof}[Proof of Proposition~\ref{prop-resLF}] We give the details only for $y \leq x$; the analysis for $x \leq y$ is similar. Recall that the resolvent kernel $G$ is just the upper-left two-by-two block of $\Phi$.

\paragraph{Case I: $0\leq y\leq x$.} Due to \eqref{def-hp} and \eqref{E:res_ker}, we have
\[
\Phi(x,y,\lambda) =- \Phi_+^\mathrm{s}(x,0,\lambda)h_p^+(\lambda)\Phi_+^\mathrm{u}(0,y,\lambda) +  \rme^{\nu_+^c(\lambda)(x-y)} W_+^c(x,\lambda)\langle \Psi_+^c(y,\lambda), \cdot \rangle +  \mathcal{O}( \rme^{-\eta|x-y|}) .
\]
By definition \eqref{def-hp2}, we can expand the first term as
\begin{eqnarray*}
\Phi_+^\mathrm{s}(x,0,\lambda)h_p^+(\lambda)\Phi_+^\mathrm{u}(0,y,\lambda) &=& \Phi_+^\mathrm{s}(x,0,\lambda) \sum_{j=1}^2 \frac{1}{\lambda} W_j(0) \langle \Psi_j(y), \Phi_+^\mathrm{u}(0,y,\lambda) \rangle \\
&=& \frac{1}{\lambda} \sum_{j=1}^2 \Phi_+^\mathrm{s}(x,0,\lambda) W_j(0)  \langle\Phi_+^\mathrm{u}(0,y,\lambda)^* \Psi_j(y), \cdot \rangle   
\end{eqnarray*}
By definition \eqref{def-Phis+}, we have 
\[
\Phi_+^\mathrm{s}(x,0,\lambda) W_+(0)  =  \rme^{\nu_+^c(\lambda)x} W_+(x)(1+ \mathcal{O}(\lambda)  )+ \cO(\lambda \rme^{\nu_+^s(\lambda)x} ) = \rme^{\nu_+^c(\lambda)x} W_+(x) +  \cO(\lambda \rme^{\nu_+^c(\lambda)x} )
\]
for any $W_+$ that is a linear combination of $W_1$ and $W_2$, where we have used the fact that $\langle \Psi_+^s(0,0), W_j(0)\rangle = 0$. Similarly, \eqref{bound-Phisu} implies that we also have $\Phi_+^\mathrm{u}(0,y,\lambda)^* \Psi_j(y) = \Psi_j(y) + \mathcal{O}(\lambda \rme^{-\eta|y|})$. Thus, we have 
\begin{eqnarray*}
\Phi_+^\mathrm{s}(x,0,\lambda)h_p^+(\lambda)\Phi_+^\mathrm{u}(0,y,\lambda) &=&  \frac{1}{\lambda} \sum_{j=1}^2  \rme^{\nu_+^c(\lambda)x}W_j(x) \langle \Psi_j(y), \cdot \rangle    + \cO(\rme^{\nu_+^c(\lambda)x} \rme^{-\eta |y|}).
\end{eqnarray*}
We can also write 
$$\rme^{\nu_+^c(\lambda)(x-y)} W^c_+(x,\lambda)\langle \Psi^c_+(y,\lambda), \cdot \rangle  = \rme^{\nu_+^c(\lambda)(x-y)} W_+(x)\langle \Psi_+(y), \cdot \rangle + \cO(\lambda \rme^{\nu_+^c(\lambda)(x-y)}),$$
where $ W_+(x) =  W_+^c(x,0)$ and $ \Psi_+(y)= \Psi_+^c(y,0)$. 

Thus, we have obtained
\begin{equation}\label{eqs-resolventI}\begin{aligned}
G(x,y,\lambda) &= \frac{1}{\lambda} \sum_{j=1}^2  \rme^{\nu_+^c(\lambda)x} V_j(x) \langle \psi_j(y), \cdot \rangle  +\rme^{\nu_+^c(\lambda)(x-y)} V_+(x)\langle \psi_+(y), \cdot \rangle
\\&\quad + \cO((\lambda+\rme^{-\eta |y|})  \rme^{\nu_+^c(\lambda)(x-y)} ) + \cO(\rme^{-\eta|x-y|}),
\end{aligned}\end{equation}
where $V_+(x)$ is a linear combination of $V_1(x)$ and $V_2(x)$.  This proves Proposition~\ref{prop-resLF} in this case. 

\paragraph{Case II: $y\leq x\leq0$.} A a result of \eqref{E:res_ker},  we obtain
\[
\Phi(x,y,\lambda) = \Phi_-^\mathrm{u}(x,0,\lambda)h_p^-(\lambda)\Phi_-^\mathrm{s}(0,y,\lambda) + \mathcal{O}(\rme^{\nu_-^c(\lambda)x}\rme^{-\eta|y|} ) + \mathcal{O}(\rme^{-\eta|x-y|}) 
\]
with \begin{eqnarray*}
\Phi_-^\mathrm{u}(x,0,\lambda)h_p^-(\lambda)\Phi_-^\mathrm{s}(0,y,\lambda) &=& \frac{1}{\lambda} \sum_{j=1}^2   \Phi_-^\mathrm{u}(x,0,\lambda) W_j(x)  \langle\Phi_-^\mathrm{s}(0,y,\lambda)^* \Psi_j(y), \cdot \rangle  .
\end{eqnarray*}
Using an argument similar to the previous case, by \eqref{bound-Phisu} we can write $ \Phi_-^\mathrm{u}(x,0,\lambda) W_j(0)  =  \rme^{\nu_-^c(\lambda)x} W_j(x) + \cO(\lambda \rme^{\nu_-^c(\lambda)x} )$, and also $\Phi_-^\mathrm{s}(0,y,\lambda)^* \Psi_j(0) = \Psi_j(y) + \mathcal{O}(\lambda \rme^{-\eta|y|})$. Thus, we get
\[
G(x,y,\lambda) = \frac{1}{\lambda} \sum_{j=1}^2  \rme^{\nu_-^c(\lambda)x} V_j(x) \langle \psi_j(y), \cdot \rangle  + \cO(\rme^{\nu_-^c(\lambda)x }\rme^{-\eta |y|}+ \rme^{-\eta|x-y|}),
\]
which yields the proposition for $y\le x\le 0$. 

\paragraph{Case III: $y\leq0\leq x$.} Again by \eqref{E:res_ker}, we write
\begin{eqnarray*}
\Phi(x,y,\lambda) &=&\Phi_+^\mathrm{s}(x,0,\lambda)\Phi_-^\mathrm{u}(0,0,\lambda)h_p^-(\lambda)\Phi_-^\mathrm{s}(0,y,\lambda)  + \mathcal{O}(\rme^{\nu_+^c(\lambda)x}\rme^{-\eta|y|} ) \\
&=& \frac{1}{\lambda} \sum_{j=1}^2\Phi_+^\mathrm{s}(x,0,\lambda)\Phi_-^\mathrm{u}(0,0,\lambda) W_j(x)    \langle \Psi_j(y) + \mathcal{O}(\lambda \rme^{-\eta|y|}), \cdot \rangle + \mathcal{O}(\rme^{\nu_+^c(\lambda)x}\rme^{-\eta|y|} ) 
\\ 
&=&\frac{1}{\lambda} \sum_{j=1}^2   \rme^{\nu_+^c(\lambda)x}W_j(x) \langle \Psi_j(y), \cdot \rangle  + \cO(\rme^{\nu_+^c(\lambda)x }\rme^{-\eta |y|}).\end{eqnarray*}
Combining all the three cases yields Proposition~\ref{prop-resLF}. 
\end{proof} 

\begin{remark}
{\em If we denote by $G_R(x,y,\lambda)$ the first row in the matrix $G(x,y,\lambda)$, then $G_R(x,y,\lambda)$ has a better bound than that of $G(x,y,\lambda)$ by $\cO(e^{-\eta |x|})$. This is due to the structure of $V_1(x) = ( r_x , r\varphi_x)$ and $V_2(x) = (0,r),$ since $r_x = \cO(\rme^{-\eta |x|})$. More precisely, we have 
\[\begin{aligned}
G_R(x,y,\lambda) &= -\frac{r_x}{\lambda}\rme^{\nu_+^c(\lambda)x} \langle \psi_1(y), \cdot \rangle +  \cO((\lambda+\rme^{-\eta |y|})  \rme^{\nu_+^c(\lambda)(x-y)} ) + \cO(\rme^{-\eta|x-y|}).\end{aligned}
\]
This last estimate means that the $R$-component of the Green's function will decay faster than the $\phi$-component by a factor of $t^{-1/2}$, after subtracting the terms resulting from the eigenfunctions.}
\end{remark}

%%%%%%%%%%%%%%%%%%%%%%%%%%%%%%%%%%%%%%%%%%%%%%%%

\section{Temporal Green's function}\label{sec-Green}

In this section, we derive pointwise bounds on the temporal Green's
function defined by 
\begin{equation}\label{def-Green} 
\mathcal{G}(x,y,t) := \frac{1}{2\pi i} \int_{\Gamma }e^{\lambda t} G(x,y,\lambda) \rmd\lambda,
\end{equation} 
where $\Gamma$ is a contour outside of the essential spectrum and $G(x,y,\lambda)$ is the resolvent kernel constructed in Section~\ref{sec-resolvent}. Since $G(x,y,\lambda)$ is analytic in $\lambda$ outside of the essential spectrum, the contour $\Gamma$ can be taken to be the boundary of the set $\Omega_\vartheta$ which is defined as in \eqref{def-Omega}; see Figure~\ref{fig-Omega}.  

\begin{lemma}\label{lem-Greenfn} If $\mathcal{G}(x,y,t)$ is defined in \eqref{def-Green}, then $\mathcal{G}(x,y,t)$ is the temporal Green's function of $\partial_t - \mathcal{L}$, where $\mathcal{L}$ is the linearized operator defined as in \eqref{def-L}. In particular, the solution to the inhomogeneous  system 
$$ (\partial_t - \mathcal{L}) U (x,t)= f(x,t) $$
is given by the standard Duhamel formula
$$ U(x,t) = \int_{\mathbb{R}} \mathcal{G}(x,y,t) U(y,0)\; \rmd y + \int_0^t \int_\mathbb{R} \mathcal{G}(x,y,t-s) f(y,s)\; \rmd s \rmd s ,$$
as long as the integrals on the right hand side are well-defined. 
\end{lemma}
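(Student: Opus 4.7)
My strategy is to identify $\mathcal{G}(x,y,t)$ as the integral kernel of the $C_0$-semigroup $e^{t\mathcal{L}}$ generated by $\mathcal{L}$, from which the Duhamel representation will follow by standard variation of parameters. By Hypothesis~\ref{H:spec-stab}, the constants $\vartheta_{1,2,3}>0$ may be chosen so that the contour $\Gamma = \partial\Omega_\vartheta$ lies strictly outside $\mathrm{spec}(\mathcal{L})$ (see Figure~\ref{fig-Omega}); consequently $\lambda\mapsto G(x,y,\lambda)$ is analytic in an open neighborhood of $\Gamma$ and the three resolvent bounds of Propositions~\ref{prop-resHF}--\ref{prop-resLF} apply uniformly along the contour.

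The first step is to establish absolute convergence of \eqref{def-Green} for $t>0$ and the legitimacy of differentiating once in $t$ and up to twice in $x$ under the integral sign. On the unbounded portion of $\Gamma$ the inequality $\Re\lambda \le -\vartheta_1 - \vartheta_2|\Im\lambda|$ gives $|e^{\lambda t}|\le e^{-\vartheta_1 t}e^{-\vartheta_2 t|\Im\lambda|}$, while Proposition~\ref{prop-resHF} supplies $|\partial_x^k G(x,y,\lambda)| \le C|\lambda|^{(k-1)/2}$ for $|\lambda|\ge M$ and $k=0,1,2$; the product is integrable in $|\Im\lambda|$ for each $t>0$, and the extra factor of $|\lambda|$ that appears upon differentiating in $t$ is absorbed by the same estimate. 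The bounded part of $\Gamma$ is handled by the uniform bound of Proposition~\ref{prop-resMF}. Dominated convergence then justifies differentiation under the integral as well as continuity in $(x,y,t)$.

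Next I would verify $(\partial_t - \mathcal{L})\mathcal{G}(\cdot,y,t)=0$ for $t>0$ by invoking the defining resolvent identity $(\lambda-\mathcal{L})G(\cdot,y,\lambda)=\delta_y$ from \eqref{eqs-resG}. Writing $\mathcal{L}_x G = \lambda G - \delta_y$ and interchanging derivatives with the integral gives
\[
\partial_t\mathcal{G}(x,y,t) - \mathcal{L}_x\mathcal{G}(x,y,t) = \frac{1}{2\pi i}\int_{\Gamma} e^{\lambda t}\,\delta_y(x)\,\rmd\lambda,
\]
and the remaining scalar contour integral vanishes for $t>0$ by closing $\Gamma$ with a large semicircle in the left half plane and applying Cauchy's theorem to the entire function $e^{\lambda t}$ (with Jordan's lemma to discard the arc). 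For the initial value $\mathcal{G}(\cdot,y,0)=\delta_y(\cdot)$, I would verify $\int\mathcal{G}(x,y,t)f(y)\,\rmd y\to f(x)$ as $t\to 0^+$ for smooth rapidly-decaying $f$, either by deforming $\Gamma$ to a far-right vertical contour $\Re\lambda = c$ and invoking the standard inverse Laplace transform identity for sectorial operators, or by directly appealing to the analytic semigroup framework that the HF bound makes available.

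Having identified $\mathcal{G}$ as the kernel of $e^{t\mathcal{L}}$, the Duhamel formula is immediate: for $U(x,t) := \int\mathcal{G}(x,y,t)U(y,0)\,\rmd y + \int_0^t\int\mathcal{G}(x,y,t-s)f(y,s)\,\rmd y\,\rmd s$, linearity together with the two properties just verified yields $(\partial_t-\mathcal{L})U=f$ with the prescribed initial data, and uniqueness follows from well-posedness of the linear Cauchy problem on the natural energy space. The main technical obstacle is the interplay between the $1/\lambda$ pole isolated in Proposition~\ref{prop-resLF} and the $t\to 0^+$ limit: naive deformation of $\Gamma$ across the origin is forbidden, and the neutral residues -- which later drive the non-localized phase response central to Theorem~\ref{theo-main} -- must be accounted for carefully. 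The choice of $\Gamma$ avoiding the origin sidesteps this issue at the level of the present lemma, but it is precisely the resulting non-decay that motivates the modulated Ansatz developed in the later sections.
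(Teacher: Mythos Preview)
Your proposal is correct and follows essentially the same route as the paper: use the high- and mid-frequency resolvent bounds (Propositions~\ref{prop-resHF}--\ref{prop-resMF}) together with the exponential decay of $e^{\lambda t}$ along $\Gamma$ to establish integrability of \eqref{def-Green}, then verify the fundamental-solution identity $(\partial_t-\mathcal{L})\mathcal{G}=\delta(x-y)\delta(t)$ by direct differentiation under the integral and the resolvent equation \eqref{eqs-resG}. The paper's proof is simply a compressed version of what you have written; your separation into the $t>0$ equation and the $t\to 0^+$ initial-condition check, and your remark that the choice of $\Gamma$ skirting the origin keeps the $1/\lambda$ pole out of play at this stage, are useful clarifications that the paper leaves implicit.
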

\begin{proof} By Propositions~\ref{prop-resHF} and~\ref{prop-resMF}, $G(x,y,\lambda)$ is uniformly bounded outside of the essential spectrum, and therefore $e^{\lambda t} G(x,y,\lambda)$ is integrable by moving the contour $\Gamma$ so that $\Gamma = \partial \Omega_\vartheta$ for $\lambda$ large.  Hence, $\mathcal{G}(x,y,t)$ is well-defined for $x\not = y$ and for $t>0$. A direct calculation then yields
$$ (\partial_t - \mathcal{L}) \mathcal{G}(x,y,t) = \delta(x-y)\delta(t).$$
This verifies that $\mathcal{G}(x,y,t)$ is indeed the Green's function. 
\end{proof}

The main result of this section is the following proposition, which contains pointwise bounds on the Green's function. This proposition will be proven in \S\ref{S:large} and \S\ref{S:bounded}, below. 
\begin{Proposition}\label{prop-Green} The Green's function $\mathcal{G}(x,y,t)$ defined as in \eqref{def-Green} may be decomposed as
$$\begin{aligned}
 \mathcal{G}(x,y,t) = e(x,t) V_1(x) \langle \psi_1(y),\cdot \rangle + e(x,t) V_2(x) \langle \psi_2(y),\cdot \rangle + \widehat{ \mathcal{G} } (x,y,t), 
\end{aligned}
$$
where $V_{1,2}(x)$ are defined in \eqref{def-V12}, the adjoint functions satisfy $\psi_{1,2}(y)= \cO(e^{-\eta |y|})$, $e(x,t)$ is the sum of error functions defined in \eqref{def-epm}, and $\widehat{\mathcal{G}}(x,y,t)$ satisfies 
$$\widehat{\mathcal{G}}(x,y,t) = \sum_\pm\cO(t^{-\frac{1}{2}}\rme^{-\frac{(x-y\pm c_\mathrm{g}t)^2}{Mt}} ) +  \cO(\rme^{-\eta(|x-y|+t)}).$$
In addition, the first row of $\widehat{\mathcal{G}}(x,y,t)$, denoted by $\widehat{\mathcal{G}}_{R}(x,y,t)$, satisfies the better bound
$$\widehat{\mathcal{G}}_{R}(x,y,t) =  \sum_\pm\cO(t^{-\frac{1}{2}} (t^{-\frac 12} + \rme^{-\eta |y|})\rme^{-\frac{(x-y\pm c_\mathrm{g}t)^2}{Mt}} ) + \cO(\rme^{-\eta(|x-y|+t)}).$$
\end{Proposition}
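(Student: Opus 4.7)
The plan is to evaluate the contour integral \eqref{def-Green} using the classical Zumbrun--Howard template \cite{ZH}: decompose $\Gamma = \partial\Omega_\vartheta$ into a low-frequency arc $\Gamma_\mathrm{LF}$ near the origin (chosen inside the disc $|\lambda|<\eta_3$ where Proposition~\ref{prop-resLF} applies, which is permitted by analyticity of the integrand on $\Omega_\vartheta\setminus\{0\}$ so that the contour may be deformed inward) and a high-frequency complement $\Gamma_\mathrm{HF}$, and on each piece substitute the appropriate resolvent bound from \S\ref{sec-resolvent}.

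For $\Gamma_\mathrm{HF}$, combining the bounds of Propositions~\ref{prop-resHF} and~\ref{prop-resMF} with $|\rme^{\lambda t}|\leq \rme^{-(\vartheta_1 + \vartheta_2|\Im\lambda|)t}$ on the rays of $\partial\Omega_\vartheta$ and parametrizing $\lambda = r\rme^{i\theta}$, a routine integration in $r$ of $r^{-1/2}\rme^{-\eta r^{1/2}|x-y|}\rme^{-\vartheta_2 r t\sin\theta_0}$ (with $\theta_0$ the fixed ray angle) gives a contribution of order $\cO(\rme^{-\eta(|x-y|+t)})$, which is absorbed into the exponentially localized piece of $\widehat{\mathcal{G}}$.

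The heart of the proof is the low-frequency analysis. Inserting the expansion of Proposition~\ref{prop-resLF} into the integral produces three pieces. The pole piece
\[
I(x,y,t) := \sum_{j=1}^2 \frac{V_j(x)\langle\psi_j(y),\cdot\rangle}{2\pi\rmi}\oint_{\Gamma_\mathrm{LF}} \frac{\rme^{\lambda t + \nu^c(\lambda)|x|}}{\lambda}\,\rmd\lambda
\]
furnishes the leading $e(x,t)V_j(x)\langle\psi_j(y),\cdot\rangle$ terms. Using $\nu^c(\lambda) = -\lambda/c_\mathrm{g} + \mathrm{d}\lambda^2/c_\mathrm{g}^3 + \cO(\lambda^3)$, the exponent rewrites as $\lambda(t-|x|/c_\mathrm{g}) + \mathrm{d}|x|\lambda^2/c_\mathrm{g}^3 + \cO(\lambda^3)$. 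Deforming $\Gamma_\mathrm{LF}$ through the pole to the steepest-descent path of this quadratic exponent, the residue at $\lambda=0$ together with the Gaussian integral along the deformed contour yields, by the standard inverse Laplace computation (cf.\ \cite{ZH}), a smoothed step of errfn form whose summation over the two asymptotic ends $x\to\pm\infty$ (built into $|x|$ and the two group velocities $\pm c_\mathrm{g}$) reproduces $e(x,t)$. The cubic-and-higher corrections in $\nu^c$ contribute only Gaussian remainders of size $\cO(t^{-1/2}\rme^{-(x\mp c_\mathrm{g}t)^2/(Mt)})$, absorbed into $\widehat{\mathcal{G}}$. The second piece $\cO(\rme^{\nu^c(\lambda)|x-y|})$ has no pole and, evaluated by the same steepest-descent method with exponent $\lambda(t-|x-y|/c_\mathrm{g}) + \mathrm{d}|x-y|\lambda^2/c_\mathrm{g}^3$, contributes directly the convected Gaussians $\sum_\pm t^{-1/2}\rme^{-(x-y\pm c_\mathrm{g}t)^2/(Mt)}$ in $\widehat{\mathcal{G}}$. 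The third piece $\cO(\rme^{-\eta|x-y|})$ is analytic at $\lambda=0$, so deforming $\Gamma_\mathrm{LF}$ into the spectral gap to the left of the origin gives only $\cO(\rme^{-\eta(|x-y|+t)})$.

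The sharper bound on $\widehat{\mathcal{G}}_R$ follows from the Remark after Proposition~\ref{prop-resLF}: the first-row pole residue carries $r_x = \cO(\rme^{-\eta|x|})$ in place of $V_j$, and its non-pole remainder has the improved factor $\lambda + \rme^{-\eta|y|}$ multiplying $\rme^{\nu^c(\lambda)(x-y)}$. Under Laplace inversion, the $\lambda$ prefactor differentiates the convected Gaussian in $t$ and yields an extra $t^{-1/2}$ of decay, while the $\rme^{-\eta|y|}$ prefactor passes through unchanged, producing exactly the stated $(t^{-1/2}+\rme^{-\eta|y|})$ improvement. The main technical obstacle I expect is the careful stationary-phase/steepest-descent analysis needed to extract the errfn structure of $I(x,y,t)$ with controlled remainders: one must handle the transition regime $|x|\sim c_\mathrm{g}t$ where the saddle of the quadratic exponent coalesces with the $\lambda=0$ pole (precisely where the errfn emerges as the smoothed step), and verify that the cubic and exponentially small corrections produce remainders uniformly bounded by the Gaussian and localized terms in the claimed decomposition.
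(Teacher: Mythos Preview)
Your proposal is correct and follows essentially the same Zumbrun--Howard template as the paper's proof. One organizational point worth making explicit: the paper does not use a fixed low/high frequency splitting of the contour, but instead first separates the regimes $|x-y|/t$ large versus bounded, and lets the radius of the low-frequency arc depend on $(x,y,t)$. For $|x-y|/t\gg 1$ the arc is taken large (scaling with $|x-y|/t$) so that only the high-frequency bound of Proposition~\ref{prop-resHF} is needed, yielding directly the exponentially localized term; the low-frequency expansion and steepest-descent computation are invoked only in the bounded-ratio regime, where the saddle point of the quadratic exponent stays inside the disc of validity of Proposition~\ref{prop-resLF}. Your fixed $\Gamma_{\mathrm{LF}}$ would force you to cap the saddle at the boundary of the small disc when $|x-y|/t$ is large, which still works but is less clean than the paper's case split.
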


This proof is essentially contained in \cite{ZH}. However, for completeness, we outline the key steps. 
The main idea is to deform the contour $\Gamma$ in such a way as to minimize the integral in the definition of $\mathcal{G}$. The choice of the minimizing contour is based on the method known as the
saddle point method, the method of stationary phase, or the method of steepest descents. As we are only integrating with respect to the spectral parameter $\lambda$, we are free to chose the contour to depend on $(x,y,t)$. 

We first split the contour $\Gamma$ into $\Gamma_1 \bigcup \Gamma_2$, with 
\begin{equation}\label{def-Gamma12}\Gamma_1:= \partial B(0,M)\cap \overline \Omega_\vartheta, \qquad \Gamma_2:=
\partial \Omega_\vartheta \setminus B(0,M), 
\end{equation} 
for some appropriate $M$ to be determined below. We consider two cases: when $|x-y|/t$ is sufficiently large and when it is bounded. 

\begin{figure}[h]
\centering\includegraphics[scale=.35]{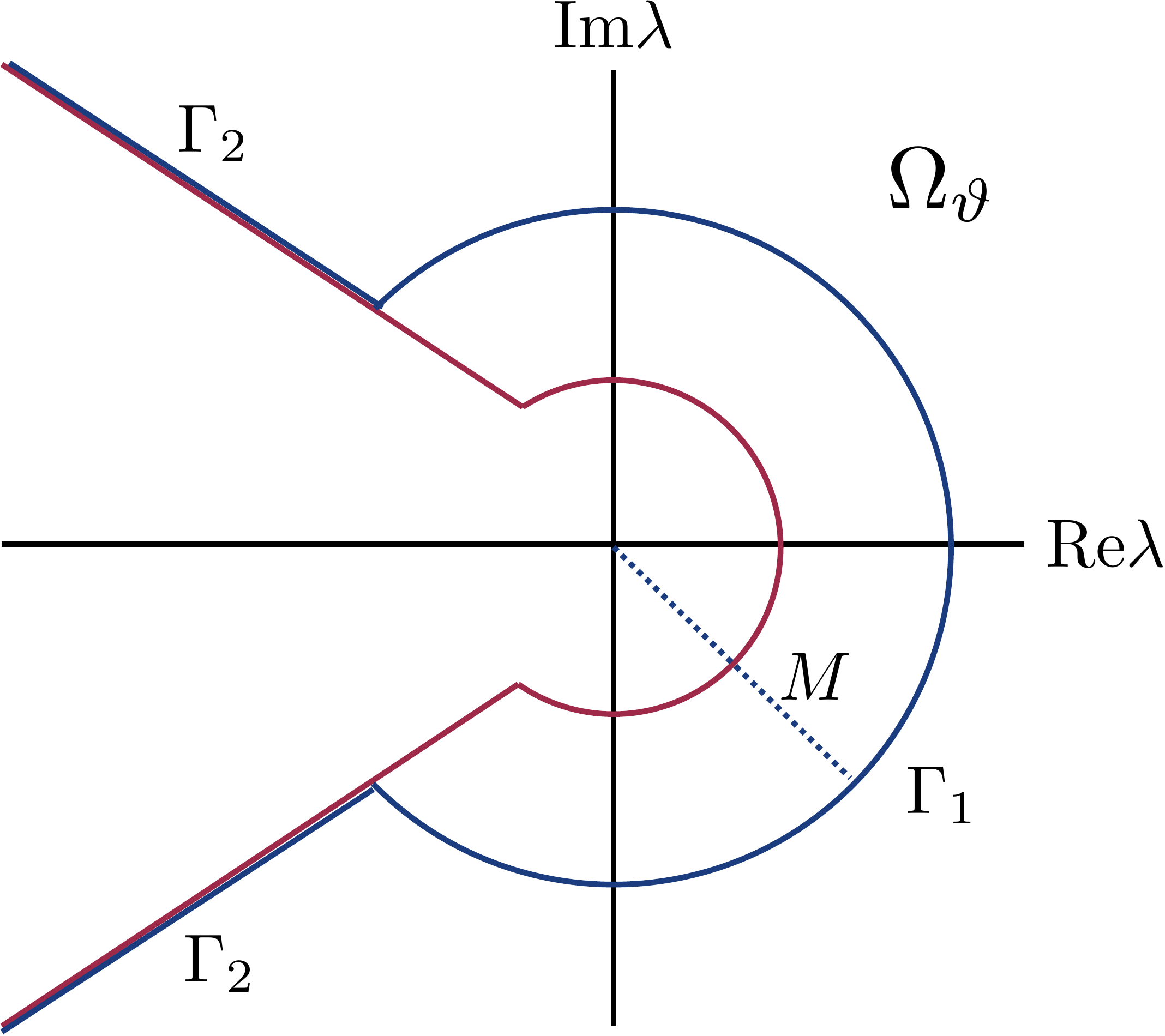} 
\caption{\em Illustration of the contours $\Gamma_1$ and $\Gamma_2$.}
\label{fig-Gamma}
\end{figure}

%%%%%

\subsection{Large $|x-y|/t$} \label{S:large} Set $z: = \eta^2|x-y|/(2t)$ and $M: = (z/\eta)^2$. Since $M$ is sufficiently large, we can apply Proposition~\ref{prop-resHF} so that 
$$
|G(x,y,\lambda) | \le C |\lambda|^{-1/2}
\rme^{-\eta|\lambda|^{1/2}|x-y|}.
$$ Let  $\lambda_0$ and $\lambda_0^*$ be the two points of intersection of $\Gamma_1$ and $\Gamma_2$. We observe that, after possibly making $\eta$ smaller, for all $\lambda \in \Gamma_1$ we have 
$$
\Re \lambda  = M \; \cos(\text{arg}(\lambda)) \le  M(1- \eta \; \text{arg} (\lambda)^2).$$
Also, on $\Gamma_2$ we have $\Re \lambda = - \vartheta (1+|\Im \lambda|)$ (choosing $\vartheta = \min\{\vartheta_1, \vartheta_2\}$ and deforming the contour slightly) and so 
$$\Re \lambda  =  \Re \lambda_0 - \vartheta (|\Im \lambda| - |\Im
\lambda_0|), \quad \lambda \in \Gamma_2.
$$
Thus, we can compute 
$$\begin{aligned}
\Big|\int_{\Gamma_{1}} \rme^{\lambda t} G(x,y,\lambda) \rmd \lambda\Big| &\le C
\int_{\Gamma_{1}} |\lambda|^{-1/2}\, \rme^{\Re \lambda t 
-\eta |\lambda|^{1/2}|x-y| } \, |d\lambda| \\ &\le C \rme^{-\frac{ 
z^{2}t}{\eta^2}} \int_{-\text{arg}(\lambda_0)}^{\text{arg}(\lambda_0)}
M^{-1/2}\rme^{-\eta  M t \; \text{arg}(\lambda)^2}  \;\rmd(\text{arg}(\lambda)) \\&\le Ct^{-1/2}
\rme^{-\frac{z^2 t}{\eta^2}},
\end{aligned} $$
and
$$
\begin{aligned}
\Big |\int_{\Gamma_{2}} \rme^{\lambda t} G(x,y,\lambda)  \rmd\lambda\Big | &\le
C \int_{\Gamma_{2}} |\lambda|^{-1/2}\, \rme^{\Re \lambda t -\eta 
|\lambda|^{1/2}|x-y|} |\rmd\lambda| \\ &\le C
\rme^{\Re(\lambda_{0})t-\eta |\lambda_0|^{1/2} |x-y|} \int_{\Gamma_{2}}
|\lambda|^{-1/2}\rme^{(\Re \lambda-\Re\lambda_{0})t}\
|\rmd\lambda|\\
&\le C \rme^{-\frac{z ^{2}t}{\eta^2}} \int_{\Gamma_2}  |\Im \,
\lambda|^{-1/2} \rme^{-\vartheta[|\Im \, \lambda|-|\Im \, \lambda_{0}|]t}\ | \rmd\, \Im
\lambda|\\ & \le  Ct^{-1/2}
\rme^{-\frac{z ^{2}t}{\eta^2}}.
\end{aligned}
$$
Combining these last two estimates and noting that $z = \frac{\eta^2|x-y|}{2t}$ is large and $\eta$ is small, we have
$$
|\mathcal{G}(x,y,t)| \le C t^{-1/2}\rme^{-\frac{z ^{2}t}{\eta^2}} = C t^{-1/2}\rme^{-\frac{z ^{2}t}{2\eta^2}}\rme^{-\frac{z ^{2}t}{2\eta^2}} \le Ct^{-1/2} \rme^{-\frac \eta 2 t}
\rme^{-\frac{z}{2\eta^2}\frac{\eta^2 |x-y|^2}{2}}
\le C t^{-1/2} \rme^{-\frac \eta 2 (|x-y| + t)} $$
 for $\eta>0$ independent of the amplitude of $\frac{|x-y|}{t}$, as long as it is sufficiently large.

%%%%%

\subsection{Bounded $|x-y|/t$} \label{S:bounded} We now turn to the critical case
where $|x-y|/t \le S$ for some fixed $S$. We first deform $\Gamma$ into $\Gamma_1 \bigcup \Gamma_2$ as in \eqref{def-Gamma12} with $M$ now being sufficiently small. 

The integral over $\Gamma_2$ is relatively straightforward. Again let $\lambda_0$, $\lambda^*_0$ be the points where
$\Gamma_1$ meets $\Gamma_2$. We have $\Re\lambda_0 = -\vartheta(1+\tilde\vartheta)$ and $\Im \lambda_0 = \tilde\vartheta$ for some $\tilde\vartheta>0$. Moreover, on $\Gamma_2$ we have $|G(x,y,\lambda)| \le C|\lambda|^{-\frac{1}{2}} $ by the mid- and high-frequency resolvent bounds. Thus, we can estimate 
$$\begin{aligned}
\Big|\int_{\Gamma_{2}} \rme^{\lambda t} G(x,y,\lambda) \rmd\lambda| &\le C \rme^{\Re
\, \lambda_0 t} \int_{\theta}^\infty  |\Im \, \lambda|^{-\frac{1}{2}}
\rme^{-\vartheta[|\Im \, \lambda|-|\Im \, \lambda_{0}|]t}\ | \rmd\, \Im \lambda|&
\le Ct^{-\frac{1}{2}} \rme^{-\vartheta t}.
\end{aligned}
$$
Noting that $|x-y|\le S t$, we have
\[
\Big|\int_{\Gamma_{2}} \rme^{\lambda t} G(x,y,\lambda) \rmd\lambda|  \leq Ct^{-\frac{1}{2}} \rme^{-\eta (|x-y|+t)}
\]
for some small $\eta>0$.  

In order to estimate the integral over $\Gamma_1$, we take $M$ and $\vartheta$ small enough so that $\Gamma_2$ remains outside of the essential spectrum and the $\lambda$-expansion obtained in \S~\ref{sec-ResLFbound} for the low-frequency resolvent kernel $G(x,y,\lambda)$ holds. We consider several cases, depending on the position of $x$ and $y$. 

\paragraph{Case I: $0\le y\le x$.} In this case, we have the expansion \eqref{eqs-resolventI}, and by equation \eqref{E:eval_exp} we also have
\[
\nu_+^c(\lambda) = -\frac{\lambda}{c_\mathrm{g}} +\frac{\mathrm{d}\lambda^2}{c_\mathrm{g}^3} + \cO(\lambda^3),
\]
where $\mathrm{d}, c_\mathrm{g}>0$ are defined as in \eqref{def-betad} and \eqref{def-cg}, respectively. Let us first estimate the contribution of the term $\mathcal{O}(\lambda^q\rme^{\nu_+^c(\lambda)(x-y)})$. Set \[
z_1 := \frac {x-y-c_\mathrm{g} t}{2t}, \qquad
z_2 := \frac{\mathrm{d}(x-y)}{c_\mathrm{g}^2 t}.
\]
Then $\lambda=z_1/z_2$ minimizes $\nu_+^c(\lambda)(x-y)$ when $\lambda$ is real. Define $\Gamma_{1}$ to  be the portion contained in $\Omega_r:=\{|\lambda|\le r\}$ of the hyperbola
\[
-\frac{1}{c_\mathrm{g}} \Re (\lambda - \lambda^2 \mathrm{d}/c_\mathrm{g}^2) \equiv -\frac{1}{c_\mathrm{g}} (\lambda_\mathrm{min} - \lambda^2_\mathrm{min} \mathrm{d}/c_\mathrm{g}^2)
\]
where $\lambda_\mathrm{min}$ is defined by $z_1/z_2$ if $|z_1/z_2|\le \epsilon$ and  by $\pm \epsilon$ if $z_1/z_2\gtrless \epsilon$, for $\epsilon $ small. 

With these definitions, we readily obtain that
\[
\Re(\lambda t + \nu_+^c(\lambda) (x -y))
\le -\frac{1}{c_\mathrm{g}} (z_1^2t/2z_2) - \eta \Im (\lambda)^2 t 
\le - z_1^2 t/C_0 - \eta \Im (\lambda)^2 t,
\]
for $\lambda \in \Gamma_{1}$; here, we note that $z_2$ is bounded above, and we have used the crucial fact that $z_1$ controls $(|x|+|y|)/t$, in bounding the error term $\cO(\lambda^3)(|x|+|y|)/t$ arising from expansion. Thus, we obtain for any $q$ that
\begin{eqnarray*} 
\int_{\Gamma_{1}} 
|\lambda|^q \rme^{\Re(\lambda t + \nu_+^c(\lambda) (x -y)) } d\lambda
& \le & C \rme^{-z_1^2 t / C_0} \int_{\Gamma_1} (|\lambda_\mathrm{min}|^q + |\Im (\lambda)|^q)\rme^{-\eta \Im(\lambda)^2t} d\lambda \\ & \le &
C t^{-\frac{1}{2} -\frac{q}{2}} \rme^{-z_1^2 t / C_0} ,
\end{eqnarray*}
for suitably large $C,\, M_0>0$. 

Thus, the contribution from $ \mathcal{O}(\lambda^q\rme^{\nu_+^c(\lambda)(x-y)})$ to the Green function bounds
is 
$$\cO(t^{-\frac{1}{2}-\frac q2}\rme^{-\frac{(x-y-c_\mathrm{g}t)^2}{Mt}} ) + \cO(\rme^{-\eta(|x-y|+t)}).
$$
%
%For the term involving $\lambda^{-1}$, we can follow the same as the above estimate, with a use of estimate \eqref{est-sing}, \eqref{gamma'} with $q=-1$, and noting that when $\lambda_\mathrm{min} <0$ we have to take into account the residue of this singular term. We thus obtain 
%\begin{equation}\label{est-Phi-I1}
%G  (x,y,t) = \cO((t^{-\frac{1}{2}}
%+ \rme^{-\eta y})\rme^{-\frac{(x-y-c_\mathrm{g}t)^2}{Mt}} ) + \chi_{\{|x-y|\le c_\mathrm{g}t\}}  \sum_{j=1}^2  V_j(x) \langle W_j(y), \cdot \rangle + \cO(\rme^{-\eta(|x-y|+t)}),
%\end{equation}
%where $\chi$ is the characteristic function with values one or zero. 
%
Clearly, the term  $ \mathcal{O}(\rme^{-\eta(x-y)})$ in $\Phi^s(x,y,\lambda)$ contributes a time- and space-exponential decay: $\cO(\rme^{-\eta(|x-y|+t)})$.  Thus, we are left with the term involving $\lambda^{-1}$. Precisely, consider the term $$- \frac{1}{\lambda} \sum_{j=1}^2   \rme^{(-\lambda/c_\mathrm{g} +\mathrm{d}\lambda^2/c_\mathrm{g}^3 )x} V_j(x) .$$

\begin{figure}[h]
\centering\includegraphics[scale=.35]{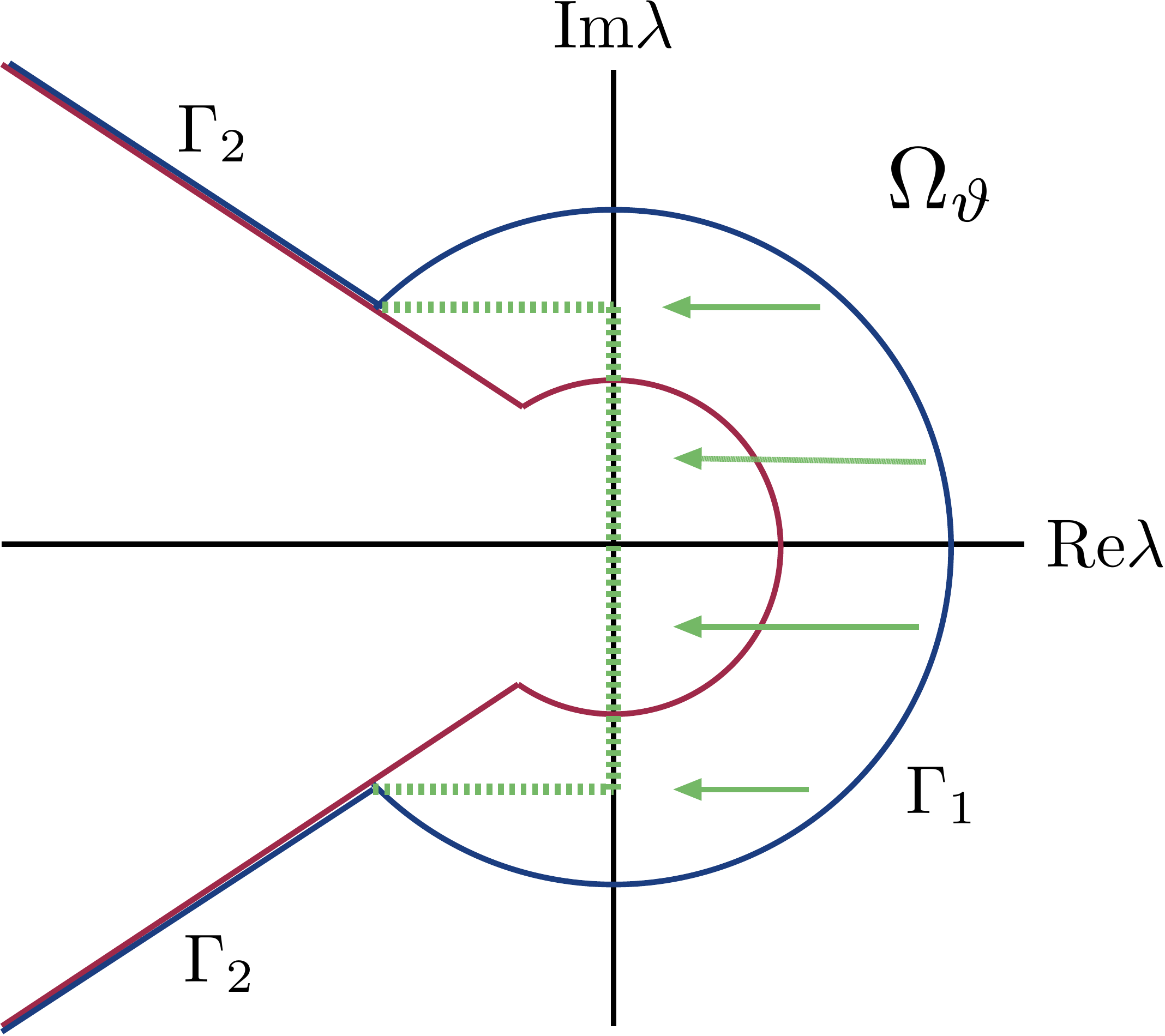} 
\caption{\em The contour $\Gamma_1$ is deformed into three straight lines.}
\label{fig-Gamma1}
\end{figure}
We denote $$\alpha(x,t):= \frac{1}{2\pi i} \int_{\Gamma_1 \bigcap \{|\lambda|\le r\}} \lambda^{-1}
\rme^{\lambda t}\rme^{(-\lambda/c_\mathrm{g} +\mathrm{d}\lambda^2/c_\mathrm{g}^3 )x}d\lambda.$$
Thus, by the Cauchy's theorem, we can move the contour $\Gamma_1$ (as shown in Figure~\ref{fig-Gamma1}) to obtain 
$$
\begin{aligned}
\alpha(x,t)&=
{1\over 2\pi }
\text{\rm P.V. }\int_{-r}^{+r}
(i\xi)^{-1} 
\rme^{i\xi t}
\rme^{(-i\xi/c_\mathrm{g} - \xi^2 \mathrm{d}/c_\mathrm{g}^3))x}
\, d\xi\\
&\quad +{1\over 2\pi i}
\left( \int_{-\eta-ir}^{-ir}
+ \int_{ir}^{-\eta+ir} \right)
\lambda^{-1} 
\rme^{\lambda t}
\rme^{(-\lambda/c_\mathrm{g} +\mathrm{d}\lambda^2/c_\mathrm{g}^3 )x}
\, d\lambda \\
&\quad +
\frac{1}{2}\text{\rm Residue }_{\lambda=0}\, \rme^{\lambda t}
\rme^{(-\lambda/c_\mathrm{g} +\mathrm{d}\lambda^2/c_\mathrm{g}^3 )x},
\end{aligned}
$$ for some $\eta>0$. Rearranging and evaluating the residue term, one then has
$$
\begin{aligned}
\alpha(x,t)&=
\left({1\over 2\pi }
\text{\rm P.V. }\int_{-\infty}^{+\infty}
(i\xi)^{-1} 
\rme^{i\xi(t- x/c_\mathrm{g})}
\rme^{-\xi^2 (\mathrm{d}/c_\mathrm{g}^3 )x}
\, d\xi
+\frac{1}{2} \right)\\
&\quad -{1\over 2\pi }
\left(\int_{-\infty}^{-r} +
\int_{r}^{+\infty} \right)
(i\xi)^{-1} 
\rme^{i\xi(t- x/c_\mathrm{g})}
\rme^{-\xi^2 (\mathrm{d}/c_\mathrm{g}^3 )x}
\, d\xi\\
&\quad +{1\over 2\pi i}
\left( \int_{-\eta-ir}^{-ir}
+ \int_{ir}^{-\eta+ir} \right)
\lambda^{-1} 
\rme^{\lambda t}
\rme^{(-\lambda/c_\mathrm{g} +\mathrm{d}\lambda^2/c_\mathrm{g}^3 )x}
\, d\lambda.
\end{aligned}
$$
Note that the first term in $\alpha(x,t)$ can be explicitly evaluated, again by the Cauchy's theorem and the standard dominated convergence theorem, as 
$$\begin{aligned}\frac 12 &-  \exp \left(-\frac{(x-c_\mathrm{g} t)^2}{4\mathrm{d} x/c_\mathrm{g}}\right) \lim_{r\to \infty}\frac{1}{2\pi}\int_0^{\pi} \rme^{ - \frac{\mathrm{d}x}{c_\mathrm{g}^3}(r\rme^{i\theta} + i\frac{c_\mathrm{g}^2(x-c_\mathrm{g}t)}{2\mathrm{d}x})^2} d\theta
= \frac 12 \Big(1 -  \exp \left(-\frac{(x-c_\mathrm{g} t)^2}{4\mathrm{d} x/c_\mathrm{g}}\right )\Big),
\end{aligned}
$$
which is conveniently simplified to 
\begin{equation}\label{errfn-est}
\frac12\errfn\left(\frac{-x+c_\mathrm{g} t}{\sqrt{4\mathrm{d} |x/c_\mathrm{g}|}}\right) ,\end{equation}
plus a time-exponentially small error. The second and third terms are clearly bounded by $C\rme^{-\eta |x|}$ for $\eta$ sufficiently small relative to $r$, and thus time-exponentially
small for $t\le C|x|$. In the case 
$t\ge C|x|$, $C>0$ sufficiently large, we can simply move the
contour to $[-\eta-ir,-\eta+ir]$ to obtain  (complete)
residue $1$ plus a time-exponentially small error corresponding to 
the shifted contour integral. In this case, we note that the result again can be expressed as the errfn  
\eqref{errfn-est} plus a time-exponentially small error. Indeed, for $t \ge C |x|$, $C$ large, one can estimate   
$$1 - \errfn\left(\frac{-x+c_\mathrm{g} t}{\sqrt{4\mathrm{d} |x/c_\mathrm{g}|}}\right)  \sim  1 - \errfn (\sqrt{t}) = {1\over 2\pi } \int^{+\infty}_{\sqrt{t}} \rme^{-x^2}dx  = \cO(\rme^{-\eta t}).$$

Thus, we have obtained for $0\le y\le x$:
\begin{equation}\label{est-Phi-I1}
\begin{aligned}
G  (x,y,t) &=  
 \sum_{j=1}^2  V_j(x) \langle W_j(y), \cdot \rangle \errfn\left(\frac{-x+c_\mathrm{g} t}{\sqrt{4\mathrm{d} |x/c_\mathrm{g}|}}\right)  +\cO(t^{-\frac{1}{2}}\rme^{-\frac{(x-y-c_\mathrm{g}t)^2}{Mt}})  V_+(x)
 \\
 &\quad +  +\cO(t^{-\frac{1}{2}} (t^{-\frac 12} + e^{-\eta |y|})\rme^{-\frac{(x-y-c_\mathrm{g}t)^2}{Mt}} )+  \cO(\rme^{-\eta(|x-y|+t)}) 
\end{aligned}
\end{equation}
in which we recall that  $V_+(x)$ belongs to the span of $V_1(x)$ and $V_2(x)$. We note that the errfn in \eqref{est-Phi-I1} may be rewritten as
$$
 \errfn\left(\frac{-x+c_\mathrm{g} t}{\sqrt{4\mathrm{d}t}}\right)$$
plus error
$$
\begin{aligned}
\errfn&\left(\frac{-x+c_\mathrm{g} t}{\sqrt{4\mathrm{d} |x/c_\mathrm{g}|}}\right) -  \errfn\left(\frac{-x+c_\mathrm{g} t}{\sqrt{4\mathrm{d}t}}\right) 
 = \cO(t^{-1}\rme^{ (x-c_\mathrm{g} t)^2/Mt}),
\end{aligned}
$$
for $M>0$ sufficiently large. Note also that $\errfn\left(\frac{-x-c_\mathrm{g} t}{\sqrt{4\mathrm{d}t}}\right) $ is time-exponentially small since $x,t,c_\mathrm{g}$ are all positive. Thus, an equivalent expression for $G  (x,y,t)$ is
\[
\begin{aligned}
G  (x,y,t) =  &  \sum_{j=1}^2  V_j(x)  \Big[ e_+(x,t) \langle W_j(y), \cdot \rangle   +\cO(t^{-\frac{1}{2}}\rme^{-\frac{(x-y-c_\mathrm{g}t)^2}{Mt}} ) \Big]  \\ & + \cO(t^{-\frac{1}{2}} (t^{-\frac 12 } + e^{-\eta |y|})\rme^{-\frac{(x-y-c_\mathrm{g}t)^2}{Mt}} ) + \cO(\rme^{-\eta(|x-y|+t)}) , 
\end{aligned}
\]
with $e_\pm (x,t) =  \Big[\errfn\left(\frac{-x\pm c_\mathrm{g} t}{\sqrt{4\mathrm{d}t}}\right) - \errfn\left(\frac{-x\mp c_\mathrm{g} t}{\sqrt{4\mathrm{d}t}}\right)\Big]$. Again, we remark that at the leading term the first row (or the $R$-component) of the Green function matrix $G (x,y,t)$ enjoys a better bound due to the special structure of $V_j(x)$; see \eqref{def-V12}. 

\paragraph{Case II: $y\le x\le 0$.} In this case we recall that
\[
\begin{aligned}
G(x,y,\lambda) 
&= \mathcal{O}(\rme^{\nu_-^c(\lambda)x}\rme^{-\eta |y|})+ \mathcal{O}(\rme^{-\eta|x-y|}) - 
\frac{1}{\lambda} \sum_{j=1}^2   \langle W_j(y), \cdot \rangle   \rme^{\nu_-^c(\lambda)x} V_j(x),
\end{aligned}
\]
where 
\[
\nu_-^c(\lambda) = -\frac{\lambda}{c^-_\mathrm{g}} +\frac{\mathrm{d}\lambda^2}{(c^-_\mathrm{g})^3} + \cO(\lambda^3), 
\]
with $c^-_\mathrm{g} = - c_\mathrm{g} <0$. 
Similar computations as in the previous case yield
\[
\begin{aligned}
G  (x,y,t) =&\sum_{j=1}^2  V_j(x) e_-(x,t)\langle W_j(y), \cdot \rangle  + \cO(t^{-\frac{1}{2}}\rme^{-\eta |y|}\rme^{-\frac{(x+c_\mathrm{g}t)^2}{Mt}} ) + \cO(\rme^{-\eta(|x-y|+t)}) 
\end{aligned}
\]
with $e_\pm (x,t) =  \Big[\errfn\left(\frac{-x\pm c_\mathrm{g} t}{\sqrt{4\mathrm{d}t}}\right) - \errfn\left(\frac{-x\mp c_\mathrm{g} t}{\sqrt{4\mathrm{d}t}}\right)\Big]$. 

\paragraph{Case III: $y\le 0\le x$.} In this case we have
\[
\begin{aligned}
G(x,y,\lambda) 
=&\mathcal{O}(\rme^{-\eta x}\rme^{-\eta|y|}) + \mathcal{O}(\rme^{\nu_+^c(\lambda)x}\rme^{-\eta|y|} )  - 
\frac{1}{\lambda} \sum_{j=1}^2   \langle W_j(y), \cdot \rangle   \rme^{\nu_+^c(\lambda)x}V_{j}(x).
\end{aligned}
\]
Thus, similar computations as done in case I yield 
\[
\begin{aligned}
G  (x,y,t) =& \sum_{j=1}^2  V_j(x) e_+(x,t) \langle W_j(y), \cdot \rangle + \cO(t^{-\frac{1}{2}}\rme^{-\eta |y|}\rme^{-\frac{(x-c_\mathrm{g}t)^2}{Mt}} )+  \cO(\rme^{-\eta(|x|+|y|+t)}). \\
\end{aligned}
\]
This establishes Proposition~\ref{prop-Green}. 

%%%%%%%%%%%%%%%%%%%%%%%%%%%%%%%%%%%%%

\section{Asymptotic Ansatz}\label{sec-asymptotic}
In this section, we shall construct the asymptotic Ansatz that will be used in the analysis of \eqref{eqs-qCGLintro}. For convenience, let us recall that the qCGL equation \eqref{eqs-qCGLintro} (after rescaling so that $\mu = 1$) is 
\begin{equation}\label{eqs-qCGL}
A_t = (1+i\alpha)A_{xx} + A - (1+i\beta)A|A|^2 + (\gamma_1+i\gamma_2) A |A|^4.  
\end{equation}

%%%%%%%%%%%%%%%%%%%%%%%%%%%%%%%%%%%

\subsection{Setup}
Recall that the source solution of \eqref{eqs-qCGL} is given by
\[
A_\mathrm{source}(x,t) = r(x) e^{\rmi \varphi(x)}e^{-\rmi \omega_0t}.
\]
Given a solution $A(x,t)$ of \eqref{eqs-qCGL}, define
\begin{equation}\label{def-BB}
B(x,t): = A(x+p(x,t),t)  = [r(x) + R(x, t)]e^{i(\varphi(x) + \phi(x, t))}\rme^{-i\omega_0 t},
\end{equation} 
where for the moment $p(x,t)$ is an arbitrary smooth function. The functions $R$ and $\phi$ denote the perturbation in the amplitude and phase variables. We will insert \eqref{def-BB} into \eqref{eqs-qCGL} and derive a system for $U = (R,r\phi)$. We obtain the following simple lemma whose proof will be given below in \S~\ref{sec-pertsystem}. 
\begin{lemma}\label{lem-nonlinearUp} Let $p(x,t)$ be a given smooth function so that the map $(x,t)\mapsto (\xi(x,t),\tau(x,t))$, defined by $\xi= x+p(x,t)$ and $\tau = t$, is invertible. If $A(x,t)$ is a solution of \eqref{eqs-qCGL}, then $U = (R,r\phi)$, where $R$ and $\phi$ are defined by \eqref{def-BB}, satisfies
 \begin{equation}\label{eqs-nonUp}
(\partial_t - \mathcal{L}) U -   \mathcal{T}(p) - \mathcal{Q}(R,\phi,p) = \mathcal{N}(R,\phi,p) .
\end{equation}
In this equation, $\mathcal{L}$ is the linearized operator, $\mathcal{T}(p)$ denotes the linear residual effects resulting from the function $p$, $\mathcal{Q}(R,\phi,p)$ denotes the quadratic nonlinear terms, and $\mathcal{N}(R,\phi,p)$ denotes the higher-order terms. They are defined in equations \eqref{redef-L}, \eqref{def-Plin}, \eqref{def-Q}, and \eqref{good-Re}, respectively.
\end{lemma}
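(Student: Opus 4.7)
The plan is a direct substitution into \eqref{eqs-qCGL} followed by careful organization of terms according to their order in $(R,\phi,p)$. Under the invertible change of variables $\xi = x + p(x,t)$, $\tau = t$, one computes by the chain rule
\begin{equation*}
A_\xi(\xi,\tau) = \frac{B_x}{1+p_x}, \qquad A_{\xi\xi}(\xi,\tau) = \frac{B_{xx}}{(1+p_x)^2} - \frac{B_x\,p_{xx}}{(1+p_x)^3}, \qquad A_\tau(\xi,\tau) = B_t - \frac{B_x\, p_t}{1+p_x},
\end{equation*}
where $B(x,t) := A(x+p(x,t),t)$. Substituting these into \eqref{eqs-qCGL} evaluated at $(\xi,\tau)$ and expanding each factor $(1+p_x)^{-k}$ as a geometric series (valid by the invertibility hypothesis, which forces $|p_x|<1$) yields a modified evolution equation for $B$ in which the dependence on $p, p_t, p_x, p_{xx}$ is explicit and polynomial up to arbitrary order.

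The next step is to substitute the polar Ansatz $B = (r+R)\rme^{\rmi(\varphi+\phi)-\rmi\omega_0 t}$ and factor out the exponential. Expanding $B_t$, $B_x$, $B_{xx}$ produces polynomial expressions in $R, \phi$ and their derivatives with coefficients depending on $r, \varphi$ and their derivatives, while expanding $|B|^2$ and $|B|^4$ produces polynomials in $R$ alone (the phase cancels in these moduli). The part that is independent of both $p$ and $(R,\phi)$ is precisely the statement that $A_\mathrm{source}$ is an exact solution of qCGL, so by \eqref{eqs-NB} it cancels identically; this is the cancellation that makes the whole scheme work.

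I then separate the resulting complex identity into its real and imaginary parts. Rather than dividing by $r$ (which vanishes at isolated points), I multiply the imaginary part through by $r$ so that the natural variable is $U = (R, r\phi)^\top$. The terms that are linear in $(R,\phi)$ and independent of $p$ must reproduce $\mathcal{L}U$ as given in Lemma~\ref{lem-linearization}, which serves as a useful consistency check. The terms linear in $p$ and its derivatives and independent of $(R,\phi)$ are collected as $\mathcal{T}(p)$; the quadratic terms in $(R,\phi,p)$ and their derivatives form $\mathcal{Q}(R,\phi,p)$; all remaining higher-order contributions become $\mathcal{N}(R,\phi,p)$.

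The entire argument is algebraic and contains no analytic subtlety beyond the invertibility hypothesis; the main obstacle is bookkeeping. Both the chain-rule expansion and the polynomial expansion of $(r+R)^m \rme^{\rmi m\phi}$ arising from the cubic and quintic nonlinearities (with $m = 1, 3, 5$) produce a substantial number of cross-terms that have to be carefully sorted. One point worth flagging is that the apparent singularity $1/r$ appearing in the matrix $D_0^\varphi$ of Lemma~\ref{lem-linearization} is harmless: at any zero $x_0$ of $r$, item~1 of Lemma~\ref{lem-NBexistence} forces $r_{xx}(x_0) = \varphi_x(x_0) = 0$, so the combination $(2\varphi_x r_x + \alpha r_{xx})/r$ extends smoothly across $x_0$, and in particular the operator $\mathcal{L}$ acting on $U$ is well-defined everywhere.
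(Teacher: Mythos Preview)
Your proposal is correct and follows essentially the same route as the paper: the paper first isolates the effect of the coordinate change by deriving a modified qCGL equation for $B$ (its Lemma~\ref{lem-newqCGL}, exactly your chain-rule step plus the geometric expansion of $(1+p_x)^{-k}$), then substitutes the polar Ansatz, uses \eqref{eqs-NB} to cancel the zeroth-order part, and sorts the remaining real and imaginary pieces by degree into $\mathcal{L}U$, $\mathcal{T}(p)$, $\mathcal{Q}$, and $\mathcal{N}$. Your remark on the smooth extension of $D_0^\varphi$ across zeros of $r$ is a helpful aside not present in the paper's proof.
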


We recall from Lemma~\ref{lem-linearization} that 
 \begin{equation}\label{redef-L}
\mathcal{L}= D_2 \partial_x^2 - 2 \varphi_x D_1 \partial_x + D_0^\varphi(x) + D_0(x),
\end{equation}
with 
\begin{equation} \label{E:defD2}
\begin{aligned}
D_1: = \begin{pmatrix} \alpha & 1 \\ -1 & \alpha\end{pmatrix}, & \qquad  D_2 :=  \begin{pmatrix} 1 & -\alpha \\ \alpha & 1\end{pmatrix}  
\\ D_0^\varphi(x):= \begin{pmatrix} 0&  \frac 1r (2\varphi_x r_x + \alpha r_{xx})\\
0& \frac 1r ( 2\alpha \varphi_x r_x - r_{xx})
\end{pmatrix} ,&\qquad D_0(x) : =  \begin{pmatrix} 1 - 3r^2 - \alpha\varphi_{xx} - \varphi_x^2 + 5 \gamma_1 r^4 & 0\\
\omega_0 -3\beta r^2 - \alpha \varphi_x^2 + \varphi_{xx} + 5\gamma_2 r^4 & 0\end{pmatrix} .
\end{aligned}
\end{equation}
The function $\mathcal{T}(p)$ is defined by 
\begin{equation}\label{def-Plin}\begin{aligned}
\mathcal{T}(p) := p_t \begin{pmatrix} r_x\\r \varphi_x \end{pmatrix}  + 2 r \varphi_x^2 p_x \begin{pmatrix} 1\\\alpha\end{pmatrix}+ r \varphi_x p_{xx} \begin{pmatrix} \alpha \\-1\end{pmatrix},\end{aligned}\end{equation}
and the quadratic term $ \mathcal{Q}(R,\phi,p): = (Q_R(R,\phi,p),Q_\phi(R,\phi,p))$ is defined by
\begin{equation}\label{def-Q}\begin{aligned}
Q_R(R,\phi)  \quad&:=\quad -2\varphi_xR \phi_x  - r \phi_x^2 - 3rR^2  + 10 \gamma_1 r^3 R^2  - 3 r \varphi_x^2 p_x^2 + 4 r \varphi_x p_x \phi_x + 2 \varphi_x^2 R p_x %- p_tp_xr,
\\ 
Q_\phi(R,\phi)   \quad&:= \quad  -R \phi_t -2\alpha\varphi_xR \phi_x  - \alpha r\phi_x^2 - 3 \beta r R^2 + 10 \gamma_2 r^3 R^2 \\&\quad \quad  - r \varphi_x p_t p_x + r p_t \phi_x + \varphi_x p_t R - 3 \alpha r \varphi_x^2 p_x^2 + 4 \alpha r \varphi_x p_x \phi_x + 2\alpha \varphi_x^2 p_x R.
\end{aligned}\end{equation}
The remainder $\mathcal{N}(R,\phi,p)$, corresponding to higher order nonlinear terms as well as terms that are small due to exponential localization in space, satisfies the uniform bound
\begin{equation}\label{good-Re}\begin{aligned}
\mathcal{N}(R,\phi,p) &= \cO\Big[ |R\phi_x^2|+|R_x\phi_x| + |R \phi_{xx}|  +|R|^3 +|RR_x|
 + |p_x|^3+|p_x^2p_t| 
 \\&\quad
 +|p_x p_{xx}| 
 + |p_t R_x| + |p_{xx}|(|R|+|\phi_x|)+|p_x|(|R_x| + |\phi_{xx}|+|R\phi_x| + e^{-\eta_0|x|}) \Big].
\end{aligned}\end{equation}  

%%%%%%%%%%%%%%%%%%%%%%%%%%%%%%%%%%%%%%%%

\subsection{Approximate solution}\label{S:app-soln}

We will construct an approximate solution $U^a$ that solves \eqref{eqs-nonUp} up to some error terms that are harmless for the nonlinear analysis. In course of constructing the approximate Ansatz, the Burgers-type equation \eqref{intro-Burgers} appears as a governing equation at leading order. Let
\[
L_B: = \partial_t + 2(\alpha - \beta_*) \varphi_x \partial_x- \mathrm{d} \partial_x^2 
\]
denote the linear part of the Burgers equation, where we recall that
\[
\mathrm{d} := (1+\alpha \beta_*) - \frac{2k_0^2 (1+\beta^2_*)}{r_0^2(1-2\gamma_1r_0^2)}, \qquad \beta_* := \frac{\beta - 2 \gamma_2 r_0^2}{1-2\gamma_1 r^2_0}.
\]
Note that the operator $L_B$ corresponds precisely with the curve essential spectrum of the linearized operator $\mathcal{L}$ that touches the origin; see equation \eqref{gv-evalue}.  

Recall from the discussion of \S~\ref{sec-Intro} that the dynamics of $R$ are higher-order, relative to $\phi$, due to the zero-order terms in the $R$-component, which can be seen in the operator $D_0$ defined in \eqref{E:defD2}. In order to construct the approximate solution, we will (asymptotically) diagonalize the operator $D_0$ so that these zero-order terms only appear in the equation for $R$. Variables labeled with a hat correspond to variables to which this diagonalizing transformation has been applied. We first state the approximate solution in these variables, and then undo the diagonalizing transformation. 

Our approximate solution is constructed as follows. First, take $ \mathcal{B}(x,t)$ to be an arbitrary smooth solution to the linear Burgers equation: $L_B \mathcal{B} =0$.
Define
\begin{equation}\label{Burgers-solns}\left\{\begin{aligned}
\hat \phi^a(x,t,\delta^\pm)  &:= \frac {\mathrm{d}}{2q} \Big[ \log \Big(1 + \delta^+\mathcal{B}(x,t)\Big) + \log \Big(1 + \delta^- \mathcal{B}(x,t)\Big)\Big]\\
 p(x,t,\delta^\pm)  &:= \frac {\mathrm{d}}{2qk_0} \Big[ \log \Big(1 + \delta^+ \mathcal{B}(x,t)\Big) - \log \Big(1 + \delta^-  \mathcal{B}(x,t)\Big)\Big]
 \end{aligned}\right.\end{equation}
 where
 \begin{equation}\label{E:defq}
 q = (\alpha - \beta_*) + \frac{4 k_0^2(\gamma_1\beta - \gamma_2)}{(1-2\gamma_1r_0^2)^3}
 \end{equation}
and $\delta^\pm \in \mathbb{R}$ are for the moment arbitrary constants. 
\begin{remark}\label{rem-Burgerschoice}  For each fixed $\delta^\pm$, the function $(\hat \phi^a \pm k_0 p)(x,t,\delta^\pm)$, defined via \eqref{Burgers-solns}, solves the nonlinear Burgers-type equation
\begin{equation}\label{Govern-eqs}
L_B (\hat\phi^a \pm k_0p) = q (\hat \phi_x^a \pm k_0p_x)^2,
\end{equation}
where $q$ is defined in \eqref{E:defq}. In fact, $\hat \phi^a$ and $p$ were chosen essentially by applying the Cole-Hopf transformation to the function $\mathcal{B}$. Therefore, they will exactly cancel the potentially problematic quadratic terms in the nonlinearity. Moreover, \eqref{Govern-eqs} implies that
\begin{equation}\label{Dt-phi} \begin{aligned}
\hat \phi^a_t  +  2(\alpha - \beta_*) \varphi_x \hat\phi^a_x &= \cO\Big(|\hat \phi_{xx}^a| + |\hat \phi_x^a|^2 + |p_{xx}| + |p_x|^2 \Big),
\\ p_t  +  2(\alpha - \beta_*) \varphi_x p_x &= \cO\Big(|\hat \phi_{xx}^a| + |\hat \phi_x^a|^2 + |p_{xx}| + |p_x|^2 \Big).\end{aligned}
\end{equation}
\end{remark}
For the approximate solution of the $R$-component, define 
\begin{equation}\label{def-hatRa} \hat R ^a(x,t,\delta^\pm): = \hat R ^a_0(x,t,\delta^\pm) + \hat R ^a_1(x,t,\delta^\pm), 
\end{equation}
where
\[
\hat R ^a_0(x,t,\delta^\pm): = \frac{k_0}{ r_0 (1 - 2\gamma_1 r_0^2) } \Big( \hat \phi^a_x + \varphi_x p_x\Big)(x,t,\delta^\pm) 
\]
and $\hat R_1^a$ is a higher-order correction defined in \eqref{E:defR1}. Finally, define $\hat U^a := (\hat R^a,r\hat \phi^a)$. 

In the original variables, the approximate solution $U^a = (R^a,r\phi^a)$ is given by 
\begin{equation}\label{def-Uapp} U^a(x,t,\delta^\pm): = S \hat U^a (x,t,\delta^\pm), \qquad\text{with}\quad S: = \begin{pmatrix} 1 & 0\\ \beta_* & -1 \end{pmatrix}.\end{equation}
We shall prove that for each fixed $\delta^\pm$, $U^a(x,t,\delta^\pm)$ solves \eqref{eqs-nonUp} up to good error terms of the form 
\begin{equation}\label{def-Good}
\Upsilon (\hat \phi^a_x,p_x): = \sum_{\psi = \hat \phi^a, p} \Big[ e^{-\eta_0 |x|} |\psi_x|+ |\psi_{xx}\psi_x| + |\psi_x|^3\Big], \end{equation}
for $\eta_0>0$ as in Lemma~\ref{lem-NBexistence}. The main result of this section is the following proposition, whose proof will be given in Section~\ref{sec-Asymptotic}. 

\begin{Proposition}\label{prop-Uapprox} Let $\delta^\pm = \delta^\pm(t)$ be arbitrary smooth functions. The function $U^a = (R^a,r\phi^a)$ defined in \eqref{def-Uapp} solves  
\begin{equation}\label{eqs-Uapp}
 (\partial_t - \mathcal{L})U^a  - \mathcal{T}(p) -  \mathcal{Q}(R^a,\phi^a,p) =\sum_\pm \dot \delta^\pm(t) \Big(\Sigma^\pm(x,t,\delta^\pm(t)) + \cO(|\hat \phi^a_x| +|p_x|)\Big)+  \cO(\Upsilon(\hat \phi^a_x,p_x)).
 \end{equation}
Here $\mathcal{L}$, $\mathcal{T}(p)$ and $\mathcal{Q}(R,\phi,p)$ are defined in \eqref{redef-L}, \eqref{def-Plin}, and \eqref{def-Q}, respectively, $\Upsilon(\hat \phi^a_x,p_x)$ is defined in \eqref{def-Good}, and $\Sigma^\pm(x,t,\delta^\pm)$ are defined by 
\begin{equation}\label{def-Sgpm}\begin{aligned}
\Sigma^\pm(x,t,\delta^\pm): &= \frac{\partial U^a}{\partial \delta^\pm}(x,t,\delta^\pm) -  \frac{\partial p}{\partial \delta^\pm}(x,t,\delta^\pm)\begin{pmatrix}r_x\\ r\varphi_x 
\end{pmatrix} .
\end{aligned}
\end{equation}
\end{Proposition}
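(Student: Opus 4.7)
The plan is to work in the diagonalized coordinates $\hat U^a = S^{-1} U^a$ introduced in \eqref{def-Uapp}: the matrix $S$ is chosen precisely so that the zeroth-order part $D_0^\infty$ of the asymptotic linearized operator becomes lower-triangular with a strongly damping entry $-2r_0^2(1-2\gamma_1 r_0^2)$ in the $\hat R$ slot and a zero in the $\hat\phi$ slot, while the principal part of the $\hat\phi$-equation is precisely the Burgers-type operator $L_B = \partial_t + 2(\alpha-\beta_*)\varphi_x\partial_x - \mathrm{d}\partial_x^2$ appearing in \eqref{intro-Burgers}. I would substitute the Ansatz $\hat U^a = (\hat R^a_0 + \hat R^a_1, r\hat\phi^a)$ into the transformed version of \eqref{eqs-nonUp} and verify that, after applying $S$, all residuals in the original variables fall into the three categories allowed on the right-hand side of \eqref{eqs-Uapp}: terms of the form $\dot\delta^\pm \Sigma^\pm$, terms of the form $\dot\delta^\pm\cO(|\hat\phi^a_x|+|p_x|)$, and terms of order $\Upsilon(\hat\phi^a_x,p_x)$.

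For the $\hat\phi$-component, the crux is Remark~\ref{rem-Burgerschoice}: formulas \eqref{Burgers-solns} are the Cole--Hopf images of the solutions $1+\delta^\pm\mathcal{B}(x,t)$ of the linear Burgers equation $L_B\mathcal{B}=0$, so $\hat\phi^a\pm k_0 p$ satisfies the nonlinear Burgers equation \eqref{Govern-eqs} identically when $\delta^\pm$ are constants. Consequently the potentially dangerous quadratic contribution $q(\hat\phi^a_x\pm k_0 p_x)^2$ arising from $\mathcal{Q}$ is exactly canceled by the Burgers nonlinearity, and the remaining linear action $L_B\hat\phi^a$ is produced by the coupling of $\hat R^a$ to $\hat\phi^a$ in the diagonalized linearization. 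This forces the choice $\hat R^a_0 = \frac{k_0}{r_0(1-2\gamma_1 r_0^2)}(\hat\phi^a_x+\varphi_x p_x)$: it is the unique leading-order algebraic relation that makes the residual of the $\hat\phi$-equation purely of type $\Upsilon$ (cubic in the derivatives, or carrying an $e^{-\eta_0|x|}$ prefactor from the convergence $r\to\pm r_0$ and $\varphi_x\to\pm k_0$ provided by Lemma~\ref{lem-NBexistence}). The correction $\hat R^a_1$ defined in \eqref{E:defR1} is then chosen so that, using the invertibility of the damping coefficient, the remaining residual of the $\hat R$-equation is likewise $\cO(\Upsilon)$.

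The treatment of $\delta^\pm=\delta^\pm(t)$ is then a matter of collecting time derivatives. The only places where $\dot\delta^\pm$ enters are $\partial_t U^a$, contributing $\sum_\pm \dot\delta^\pm\,\partial_{\delta^\pm} U^a$, and the factor $p_t$ inside $\mathcal{T}(p)$, contributing $\sum_\pm \dot\delta^\pm\,\partial_{\delta^\pm} p\cdot (r_x,r\varphi_x)^T$; subtracting these combines to yield precisely $\sum_\pm\dot\delta^\pm\Sigma^\pm$ by the definition \eqref{def-Sgpm}. The remaining occurrences of $p_t$ inside $\mathcal{Q}$ and the coefficients of $p_x,p_{xx}$ (namely $p_tp_x$, $p_t\phi_x$, $p_t R$, $p_t R_x$) are controlled using \eqref{Dt-phi}, which replaces $p_t$ by $\cO(|\hat\phi^a_{xx}|+|\hat\phi^a_x|^2+|p_{xx}|+|p_x|^2)$ modulo a $\dot\delta^\pm\cO(|\hat\phi^a_x|+|p_x|)$ remainder, keeping them inside either $\Upsilon$ or the $\dot\delta^\pm\cO(|\hat\phi^a_x|+|p_x|)$ bucket.

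The principal technical obstacle is the bookkeeping in step two: after the substitution one must systematically expand every coefficient of $\mathcal{L}$, $\mathcal{T}(p)$ and $\mathcal{Q}(R^a,\phi^a,p)$ about its $x=\pm\infty$ limit, isolate the localized defect $\cO(e^{-\eta_0|x|})$ supplied by Lemma~\ref{lem-NBexistence}, and identify which of the many resulting quadratic and cubic monomials in $(\hat R^a_0,\hat\phi^a_x,p_x,p_{xx},\varphi_x)$ are annihilated by the Burgers-type identity \eqref{Govern-eqs} and which require cancellation through the design of $\hat R^a_1$. This is the step that pins down the specific numerical value \eqref{E:defq} of $q$ and the precise formula for $\hat R^a_0$: any other choice would leave uncontrolled quadratic terms at leading order, which would destroy the nonlinear iteration of \S\ref{sec-stability}.
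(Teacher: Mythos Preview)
Your proposal is correct and follows essentially the same approach as the paper: diagonalize via $S$, choose $\hat R^a_0$ so that the $\hat\phi$-equation reduces to the Burgers identity \eqref{Govern-eqs} satisfied by the Cole--Hopf Ansatz, use $\hat R^a_1$ to absorb the remaining quadratic residual in the $\hat R$-equation via the damping coefficient $-2r_0^2(1-2\gamma_1r_0^2)$, and finally collect the $\dot\delta^\pm$ contributions from $\partial_t U^a$ and $p_t$ inside $\mathcal{T}(p)$ into $\Sigma^\pm$. The only minor presentational difference is that the paper motivates the formula for $\hat R^a_0$ from the linear part of the $\hat R$-equation rather than the $\hat\phi$-equation (see the display before \eqref{def-hatR0}), but the two viewpoints are equivalent since the same algebraic choice serves both purposes; the paper is also terser about the non-constant $\delta^\pm(t)$ case, which you spell out more explicitly.
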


We now chose the function $\mathcal{B}(x,t)$ used in \eqref{Burgers-solns} to be 
\begin{equation}\label{choice-Be}
\mathcal{B}(x,t) = e(x,t+1) , \qquad e(x,t) = \errfn\left(\frac{x+c_\rmg t}{\sqrt{4\mathrm{d} t}}\right) - \errfn\left(\frac{x-c_\rmg t}{\sqrt{4\mathrm{d} t}}\right).
\end{equation}
Note that, although it is not the case that $L_B \mathcal{B} = 0$, this equation is satisfied asymptotically in an appropriate sense. In other words, the function $\mathcal{B}$ is still sufficient for our Ansatz in the sense that Proposition~\ref{prop-corUapp}, which will be proven in \S\ref{sec-proofProp52}, holds.

\begin{Proposition}\label{prop-corUapp} Let $\delta^\pm = \delta^\pm(t)$ be arbitrary smooth functions and let $\mathcal{B}(x,t)$ defined as in \eqref{choice-Be}. The approximate solution $U^a$ constructed  as in \eqref{def-Uapp} satisfies  
\[
\begin{aligned}
 (\partial_t - \mathcal{L})U^a  - \mathcal{T}(p) -  \mathcal{Q}(R^a,\phi^a,p) = \frac{\mathrm{d}}{2q}\sum_\pm \frac{\dot \delta^\pm(t)  }{ (1+ \delta^\pm)} \mathcal{E}^\pm(x,t)  +{\mathcal{R}}^{app}_1 (x,t, \delta^\pm),
\end{aligned}
\]
where $\mathcal{E}^\pm(x,t)$ is defined by 
\begin{equation}\label{def-Epm}\begin{aligned}
\mathcal{E}^\pm(x,t) :&= \begin{pmatrix}\frac{\varphi_x \pm k_0}{r_0(1-2\gamma_1r_0^2)} \mathcal{B}_x(x,t)  \\ r\mathcal{B}(x,t)
\end{pmatrix} \mp \frac{\mathcal{B}(x,t)}{k_0}  \begin{pmatrix}r_x\\ r\varphi_x 
\end{pmatrix}
 \end{aligned}
\end{equation} 
and the remainder ${\mathcal{R}}^{app}_1(x,t, \delta^\pm) $ satisfies
\begin{equation}\label{bound-Rdelta}
\begin{aligned}
{\mathcal{R}}^{app}_1 (x,t, \delta^\pm) & \le C (|\delta^+ | + |\delta^- |)e^{-\eta_0 |x|}  \theta(x,t)  + C(1+t)^{-1}\theta(x,t) (|\delta^+|^2 + |\delta^-|^2)
\\&\quad + C(|\delta^+|+|\delta^-|)(|\dot \delta^+|+|\dot \delta^-|)(1+t)^{1/2}\theta(x,t) ,
\end{aligned}
\end{equation}
for $\eta_0>0$ as in Lemma~\ref{lem-NBexistence}. Here $\theta(x,t)$ denotes the Gaussian-like behavior (as in \eqref{Gaussian-like}):
$$ \theta(x,t) = \frac{1}{(1+t)^{1/2}} \left( \rme^{-\frac{(x-c_\rmg t)^2}{M_0(t+1)}} + \rme^{-\frac{(x+c_\rmg t)^2}{M_0(t+1)}}\right) , $$
for some $M_0>0$.
\end{Proposition}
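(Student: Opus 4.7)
The plan is to specialize Proposition~\ref{prop-Uapprox} to the choice $\mathcal{B}(x,t) = e(x,t+1)$, then quantify the three sources of error in the resulting identity: (i)~the imperfect satisfaction of $L_B\mathcal{B}=0$, which was implicit in Proposition~\ref{prop-Uapprox} through Remark~\ref{rem-Burgerschoice}; (ii)~the replacement of $\Sigma^\pm$ by $\frac{1}{1+\delta^\pm}\mathcal{E}^\pm$; and (iii)~the $\Upsilon(\hat\phi^a_x,p_x)$ contribution from \eqref{def-Good}. Each must be translated into a piece of the $\theta$-weighted bound \eqref{bound-Rdelta}.

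For (i), each summand of $e(x,t+1)$ satisfies the constant-coefficient Burgers operator $\partial_t \pm c_\mathrm{g}\partial_x - \mathrm{d}\partial_x^2$ exactly, so $L_B\mathcal{B}$ reduces to $2(\alpha-\beta_*)(\varphi_x \mp k_0)$ acting on the corresponding summand of $\mathcal{B}_x$. Lemma~\ref{lem-NBexistence} gives $|\varphi_x \mp k_0|\lesssim e^{-\eta_0|x|}$, while each summand of $\mathcal{B}_x$ is pointwise controlled by $\theta(x,t)$, so that $|L_B\mathcal{B}|\lesssim e^{-\eta_0|x|}\theta$. Propagating this through the Cole--Hopf identity
\[
L_B(\hat\phi^a \pm k_0 p) - q(\hat\phi^a_x \pm k_0 p_x)^2 \;=\; \frac{\mathrm{d}\,\delta^\pm\, L_B\mathcal{B}}{q(1+\delta^\pm\mathcal{B})},
\]
which now carries a small rather than vanishing right-hand side, produces the first term of \eqref{bound-Rdelta}, proportional to $(|\delta^+|+|\delta^-|)e^{-\eta_0|x|}\theta$.

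For (ii), direct differentiation of \eqref{Burgers-solns} gives $\partial_{\delta^\pm}\hat\phi^a = \frac{\mathrm{d}}{2q}\mathcal{B}/(1+\delta^\pm\mathcal{B})$, $\partial_{\delta^\pm}p = \pm\frac{\mathrm{d}}{2qk_0}\mathcal{B}/(1+\delta^\pm\mathcal{B})$, and $\partial_{\delta^\pm}\hat R^a_0 = \frac{\mathrm{d}(\varphi_x\pm k_0)\mathcal{B}_x}{2q\,r_0(1-2\gamma_1r_0^2)(1+\delta^\pm\mathcal{B})^2}$. Substituting these into $\Sigma^\pm = \partial_{\delta^\pm}U^a - \partial_{\delta^\pm}p\cdot(r_x,r\varphi_x)^T$, invoking the transformation $S$ of \eqref{def-Uapp}, and replacing the factors $(1+\delta^\pm\mathcal{B})^{-1}$ and $(1+\delta^\pm\mathcal{B})^{-2}$ by $(1+\delta^\pm)^{-1}$, reproduces the structure of $\frac{\mathrm{d}}{2q(1+\delta^\pm)}\mathcal{E}^\pm$ up to the correction $\hat R^a_1$ (which is already quadratic in $\delta^\pm$). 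The discrepancy, of the form $\delta^\pm(\mathcal{B}-1)\cdot(\text{derivatives of }\mathcal{B})$ or quadratic in $\delta^\pm$, generates the third term of \eqref{bound-Rdelta} after multiplication by $\dot\delta^\pm$. The factor $(1+t)^{1/2}$ there arises because $\mathcal{B}$ itself has order-one amplitude rather than $\theta$-amplitude, so converting a bound involving $\mathcal{B}$ into one involving $\theta$ costs a half-power of $t$.

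For (iii), using $|\hat\phi^a_x|+|p_x| \lesssim (|\delta^+|+|\delta^-|)|\mathcal{B}_x|$ together with the heat-kernel bounds $|\mathcal{B}_x| + |\mathcal{B}_{xx}| \lesssim (1+t)^{-1/2}\theta$ and $\theta^2 \lesssim (1+t)^{-1/2}\theta$, one obtains $|\psi_x|^3 + |\psi_{xx}\psi_x| \lesssim (|\delta^+|^2+|\delta^-|^2)(1+t)^{-1}\theta$, which is the second term of \eqref{bound-Rdelta}; the $e^{-\eta_0|x|}|\psi_x|$ piece of $\Upsilon$ is absorbed into the first term. The principal obstacle is step~(ii): tracking the transformation $S$ together with the cross-coupling between the two error-function summands of $\mathcal{B}$ in $\partial_{\delta^\pm}U^a$ with enough precision to verify that all contributions not matching $\mathcal{E}^\pm$ can indeed be absorbed into the $\theta$-weighted remainder \eqref{bound-Rdelta}, since both the Cole--Hopf extraction and the $S$-diagonalization mix the $+$ and $-$ sectors nontrivially.
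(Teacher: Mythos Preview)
Your proposal is correct and follows essentially the same approach as the paper: bound $\Upsilon$ via the pointwise estimates $|\partial_x^k\mathcal{B}|\lesssim (1+t)^{(1-k)/2}\theta$, then compute $\partial_{\delta^\pm}(\hat\phi^a,p,\hat R^a)$ explicitly and replace $(1+\delta^\pm\mathcal{B})^{-1}$ by $(1+\delta^\pm)^{-1}$ using the bound $|\mathcal{B}(1-\mathcal{B})|\lesssim (1+t)^{1/2}\theta$ to extract $\mathcal{E}^\pm$. Your step~(i), treating the residual $L_B\mathcal{B}$ explicitly via the Cole--Hopf identity, is more careful than the paper, which simply absorbs this contribution into the $e^{-\eta_0|x|}|\psi_x|$ piece of $\Upsilon$ without comment; both routes land on the same first term of \eqref{bound-Rdelta}.
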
 

The following lemma is relatively straightforward, but crucial to our analysis later on. 

\begin{lemma}\label{lem-linEest} Let $\mathcal{E}^\pm(x,t)$ be defined as in \eqref{def-Epm}. Then
$$(\partial_t - \mathcal{L})  \mathcal{E}^\pm (x,t)= \mathcal{O}((1+t)^{-1}\theta(x,t)) + \cO(\rme^{-\eta_0|x|} \theta(x,t)).$$
\end{lemma}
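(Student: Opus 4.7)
The strategy exploits three structural facts: (i) $\mathcal{L}V_1=0$ with $V_1=(r_x,r\varphi_x)^T$; (ii) the profiles $\mathcal{B}^{\pm}(x,t):=\mathrm{errfn}\bigl((x\pm c_{\mathrm g}(t+1))/\sqrt{4\mathrm{d}(t+1)}\bigr)$ that compose $\mathcal{B}=\mathcal{B}^+-\mathcal{B}^-$ exactly satisfy the scalar parabolic equations $(\partial_t\mp c_{\mathrm g}\partial_x-\mathrm{d}\partial_x^2)\mathcal{B}^{\pm}=0$; and (iii) each of the quantities $r\mp r_0$, $\varphi_x\mp k_0$, $r_x$, $\varphi_{xx}$, and $V_1-V_1^\infty$—where $V_1^\infty:=(0,r_0 k_0)^T\in\ker D_0^\infty$—together with $\mathcal{L}-\mathcal{L}_\pm$, decays as $\mathcal{O}(\rme^{-\eta_0|x|})$ on the half-line $\pm x\ge 1$ by Lemma~\ref{lem-NBexistence}. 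The coefficients in the definition of $\mathcal{E}^\pm$ are engineered so that these ingredients conspire to cancel at leading order, effectively encoding the low-wavenumber expansion of the diffusive eigenmode of $\mathcal{L}_\pm$ along the direction $V_1^\infty$.

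To execute, write $\mathcal{E}^\pm=A^\pm+Z^\pm$ with $A^\pm:=\bigl((\varphi_x\pm k_0)\mathcal{B}_x/\sigma,\ r\mathcal{B}\bigr)^T$ (where $\sigma:=r_0(1-2\gamma_1r_0^2)$) and $Z^\pm:=\mp(\mathcal{B}/k_0)V_1$. Commuting $\mathcal{L}$ past the scalar $\mathcal{B}$ and invoking $\mathcal{L}V_1=0$ gives
\[
(\partial_t-\mathcal{L})Z^\pm \;=\; \mp\tfrac{1}{k_0}\bigl[\mathcal{B}_t V_1 - D_2\mathcal{B}_{xx}V_1 + 2\varphi_x D_1\mathcal{B}_x V_1\bigr] \pm \tfrac{2}{k_0}D_2\mathcal{B}_x V_1',
\]
in which the last term is $\mathcal{O}(\rme^{-\eta_0|x|}\theta)$ because $V_1'=\mathcal{O}(\rme^{-\eta_0|x|})$ and $|\mathcal{B}_x|\lesssim \theta$. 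A parallel product-rule expansion of $(\partial_t-\mathcal{L})A^\pm$, using the source system \eqref{eqs-NB} to eliminate $r_{xx}$ and $\varphi_{xx}$, yields an analogous polynomial expression in $\mathcal{B}, \mathcal{B}_x, \mathcal{B}_{xx}, \mathcal{B}_{xxx}, \mathcal{B}_t, \mathcal{B}_{xt}$. Decomposing every variable coefficient in both expressions as (far-field limit)$+$(exponentially localized correction) on $\pm x\ge 1$ routes all the correction contributions into $\mathcal{O}(\rme^{-\eta_0|x|}\theta)$ via the crude bounds $|\mathcal{B}|\le C$ and $|\mathcal{B}_x|\le C\theta$.

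In the remaining far-field sum, substitute the scalar identities $\mathcal{B}_t=\mp c_{\mathrm g}\mathcal{B}_x+\mathrm{d}\mathcal{B}_{xx}$ and $\mathcal{B}_{xt}=\mp c_{\mathrm g}\mathcal{B}_{xx}+\mathrm{d}\mathcal{B}_{xxx}$ on $\pm x\ge 1$ (valid modulo $\mathcal{O}(\rme^{-\eta_0|x|}\theta)$, since $\mathcal{B}^\pm$ is exponentially small off its own front), together with the relations $c_{\mathrm g}=2k_0(\alpha-\beta_*)$ and $\mathrm{d}=(1+\alpha\beta_*)-2k_0^2(1+\beta_*^2)/[r_0^2(1-2\gamma_1r_0^2)]$. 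These conspire to annihilate every residue that involves only $\mathcal{B}_x$ or $\mathcal{B}_t$, leaving behind only terms that are products of bounded coefficients with $\mathcal{B}_{xx}$ and $\mathcal{B}_{xxx}$; combining with the Gaussian estimates $|\mathcal{B}_{xx}|\le C(1+t)^{-1/2}\theta$ and $|\mathcal{B}_{xxx}|\le C(1+t)^{-1}\theta$, and exchanging one $\mathcal{B}_{xx}$ for $\mathcal{B}_{xt}$ via the scalar PDE to pick up an additional $(1+t)^{-1/2}$, the total residual is $\mathcal{O}((1+t)^{-1}\theta)+\mathcal{O}(\rme^{-\eta_0|x|}\theta)$. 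The main obstacle is executing this algebraic cancellation cleanly: the specific coefficients $(\varphi_x\pm k_0)/\sigma$, $r$, and $\mp 1/k_0$ in the definition of $\mathcal{E}^\pm$ are precisely those required to force the leading $\mathcal{O}(\theta)$ contributions to vanish, and verifying this cancellation requires patient bookkeeping with the identities \eqref{asy-wnl}, \eqref{def-betad}, and \eqref{def-cg}.
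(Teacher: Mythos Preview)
Your approach is essentially the paper's ``direct calculation'' written out in more detail, and the structural facts (i)--(iii) you identify are exactly the right ingredients. Two remarks, one a simplification and one a genuine gap.

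\textbf{Simplification.} The paper's version is cleaner because it also uses $\mathcal{L}V_2=0$. Your term $A^\pm$ has second component $r\mathcal{B}$, which is exactly $\mathcal{B}V_2$; so the paper writes $\mathcal{E}^\pm=\mathcal{B}V_2\mp(\mathcal{B}/k_0)V_1+\mathcal{O}(\theta)$, where the $\mathcal{O}(\theta)$ correction is only the first-component piece $(\varphi_x\pm k_0)\mathcal{B}_x/\sigma$. Commuting $\mathcal{L}$ past the scalar $\mathcal{B}$ on \emph{both} eigenfunctions at once reduces the bookkeeping substantially, since $V_2'=(0,r_x)^T$ is exponentially localized just like $V_1'$.

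\textbf{Gap.} Your step ``exchanging one $\mathcal{B}_{xx}$ for $\mathcal{B}_{xt}$ via the scalar PDE to pick up an additional $(1+t)^{-1/2}$'' does not work: from $\mathcal{B}_t=\mp c_{\mathrm g}\mathcal{B}_x+\mathrm{d}\mathcal{B}_{xx}$ one gets $\mathcal{B}_{xt}=\mp c_{\mathrm g}\mathcal{B}_{xx}+\mathrm{d}\mathcal{B}_{xxx}$, and both $\mathcal{B}_{xt}$ and $\mathcal{B}_{xx}$ are $\mathcal{O}((1+t)^{-1/2}\theta)$, so no decay is gained by trading one for the other. If a bare $\mathcal{B}_{xx}$ residual survives, the bound you obtain is only $(1+t)^{-1/2}\theta$, not the claimed $(1+t)^{-1}\theta$. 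The resolution is not a trade but a further cancellation: the specific first-component correction $(\varphi_x\pm k_0)\mathcal{B}_x/\sigma$ in $\mathcal{E}^\pm$ encodes the diffusive eigenmode to one higher order in wavenumber, so when you carry the algebra through with the identities for $c_{\mathrm g}$, $\mathrm d$, and $\beta_*$, the $\mathcal{B}_{xx}$-level coefficient should vanish as well, leaving only $\mathcal{B}_{xxx}$-order terms. The paper compresses this by asserting that the $\mathcal{O}(\theta)$ remainder, being aligned with the diffusive mode, satisfies $(\partial_t-\mathcal{L})(\mathcal{O}(\theta))=\mathcal{O}((1+t)^{-1}\theta)+\mathcal{O}(e^{-\eta|x|}\theta)$. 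Your outline is on the right track but stops one order of cancellation short.
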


%%%%%%%%%

\subsection{Proof of Lemma~\ref{lem-nonlinearUp}}\label{sec-pertsystem}
In this subsection, we shall prove Lemma~\ref{lem-nonlinearUp}. First, we obtain the following simple lemma. 
\begin{lemma}\label{lem-newqCGL} Let $p(x,t)$ be a given smooth function so that the map $(x,t)\mapsto (\xi(x,t),\tau(x,t))$, defined by $\xi= x+p(x,t)$ and $\tau = t$, is invertible. If $A(x,t)$ solves the qCGL equation \eqref{eqs-qCGL}, then the function 
$$B(x,t): = A(x+p(x,t),t)$$ solves the modified qCGL equation
\begin{equation}\label{qCGL-newB}\partial_t B= (1+i\alpha)\partial_{x}^2B +   B - (1+i\beta)B|B|^2  + \gamma B |B|^4 + \mathcal{T}(p,B),\end{equation}
where $\gamma = \gamma_1 + \rmi \gamma_2$ and
\begin{equation}\label{def-Tresidual}\mathcal{T}(p,B) :=  \frac{p_t }{1+p_x}B_x - \frac{p_{xx}}{(1+p_x)^3} (1+i\alpha) B_x - \frac{p_x(2+p_x)}{(1+p_x)^2} (1+i\alpha) B_{xx}.\end{equation}
\end{lemma}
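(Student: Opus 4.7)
\medskip

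\noindent\textbf{Proof proposal.} The statement is a direct consequence of the chain rule, with one small algebraic rewriting at the end. The plan is to treat $\xi = x + p(x,t)$, $\tau = t$ as a near-identity change of variables and express the derivatives of $A$ at $(\xi,\tau)$ in terms of derivatives of $B(x,t) = A(\xi(x,t),t)$.

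First I would differentiate the relation $B(x,t) = A(\xi(x,t),t)$ directly. The space derivative gives $B_x = (1+p_x) A_\xi$, so
\[
A_\xi = \frac{B_x}{1+p_x}.
\]
Differentiating once more in $x$ and solving for $A_{\xi\xi}$ yields
\[
A_{\xi\xi} = \frac{B_{xx}}{(1+p_x)^2} - \frac{p_{xx}}{(1+p_x)^3}\,B_x,
\]
and the time derivative gives
\[
A_\tau = B_t - A_\xi\, p_t = B_t - \frac{p_t}{1+p_x}\,B_x.
\]
The invertibility assumption on the map $(x,t)\mapsto(\xi,\tau)$ ensures $1+p_x\neq 0$, so all these manipulations are legal. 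The pointwise nonlinear terms are trivial: since we are evaluating $A$ at $(\xi,\tau)$ and $B$ at $(x,t)$ which correspond to the same underlying point, one has $|B(x,t)| = |A(\xi,\tau)|$, so $A|A|^{2k}$ at $(\xi,\tau)$ equals $B|B|^{2k}$ at $(x,t)$.

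Next I would substitute these identities into the qCGL equation \eqref{eqs-qCGL} written at $(\xi,\tau)$. This gives
\[
B_t - \frac{p_t}{1+p_x}\,B_x = (1+i\alpha)\left[\frac{B_{xx}}{(1+p_x)^2} - \frac{p_{xx}}{(1+p_x)^3}\,B_x\right] + B - (1+i\beta)B|B|^2 + \gamma B|B|^4.
\]
The only remaining step is to split the coefficient of $B_{xx}$ so that the leading diffusive operator $(1+i\alpha)\partial_x^2$ of the unperturbed qCGL is visible. This is the elementary identity
\[
\frac{1}{(1+p_x)^2} = 1 - \frac{p_x(2+p_x)}{(1+p_x)^2},
\]
which rewrites $\frac{(1+i\alpha)}{(1+p_x)^2}B_{xx}$ as $(1+i\alpha)B_{xx}$ plus the correction term $-\frac{p_x(2+p_x)}{(1+p_x)^2}(1+i\alpha)B_{xx}$. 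Moving the $\frac{p_t}{1+p_x}B_x$ term to the right and collecting the three correction contributions produces exactly $\mathcal{T}(p,B)$ as defined in \eqref{def-Tresidual}, completing the proof.

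I do not expect any serious obstacle: there is no analysis here, only the chain rule and an algebraic identity. The one item worth flagging is simply keeping consistent what is evaluated at $(x,t)$ versus $(\xi,\tau)$; the invertibility hypothesis is used exactly to make the distinction unambiguous and to justify dividing by $1+p_x$.
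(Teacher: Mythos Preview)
Your proposal is correct and follows essentially the same approach as the paper: compute $A_\xi$, $A_{\xi\xi}$, and $A_\tau$ in terms of $B$ via the chain rule, substitute into the qCGL equation, and rearrange. You add a bit more detail than the paper (the explicit identity $\frac{1}{(1+p_x)^2} = 1 - \frac{p_x(2+p_x)}{(1+p_x)^2}$ and the remark on the pointwise nonlinear terms), but the argument is the same.
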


\begin{proof} Write $B(x,t) = A(\xi(x,t), t)$. Then
\[
B_t = A_\xi \dot p + A_t, \qquad B_x = A_\xi(1+p_x), \qquad B_{xx} = A_{\xi\xi}(1+p_x)^2 + A_\xi p_{xx},
\]
which implies
\[
A_t = B_t - \frac{\dot p B_x}{1+p_x}, \qquad A_\xi = \frac{B_x}{1+p_x}, \qquad A_{\xi\xi} = \frac{B_{xx}}{(1+p_x)^2} - \frac{p_{xx}B_x}{(1+p_x)^3}.
\]
Inserting these expressions into the equation
\[
A_t = (1+\rmi \alpha)A_{\xi\xi} + A - (1+i\beta)A|A|^2  + \gamma A |A|^4
\]
yields the result. \end{proof}

Next, we write the solution to the new qCGL equation  \eqref{qCGL-newB} in the amplitude and phase variables
$$B(x,t) =[r(x) + R(x, t)]e^{i(\varphi(x) + \phi(x, t))}\rme^{-i\omega_0 t},
$$
with $(R,\phi)$ denoting the perturbation variables. As in Section~\ref{sec-linearization}, but now keeping all nonlinear terms, we collect the real and imaginary parts of the equations for $R$ and $\phi$ to find
$$
\begin{aligned}
R_t &= R_{xx} -2\alpha\varphi_x R_x + (1 - 3r^2 - \alpha\varphi_{xx} - \varphi_x^2 + 5 \gamma_1 r^4)R
-\alpha r \phi_{xx} - 2(\alpha r_x + r\varphi_x)\phi_x \\
&\quad  + T_R(p) + Q_R(R,\phi,p) + N_R(R,\phi,p) ,\\
r\phi_t &= r\phi_{xx} + (-2\alpha r \varphi_x + 2r_x) \phi_x + \alpha R_{xx} + 2\varphi_x R_x    + (\omega_0 + \varphi_{xx} - \alpha\varphi_x^2-3\beta r^2 + 5 \gamma_2 r^4) R 
\\&\quad  + T_\phi(p) +  Q_\phi(R,\phi,p)  + N_\phi(R,\phi,p).
\end{aligned}
$$
The function $T_j(p)$, $j = R, \phi$, denotes the terms resulting from $\mathcal{T}(p, B)$ that are linear in $p$. The function $Q_j(R,\phi,p)$ collects terms that are quadratic in $(R,\phi,p)$, and $N_j(R,\phi,p) $ denotes the remaining terms. We now calculate these functions in detail. 

First, note that \eqref{def-Tresidual} can be written
$$\mathcal{T}(p,B) = p_t (1-p_x) B_x -  (1+i\alpha ) p_{xx}B_x  - (1+i\alpha ) p_x (2- 3p_x)B_{xx}+ \cO(p_x^3+p_x^2p_t +p_x p_{xx}),
$$
as long as $|p_x|<1$. Thus, by collecting the linear terms in the real and imaginary parts of $\mathcal{T}(p,B)$, we obtain \eqref{def-Plin}. Note that we do not include any terms that are of the form $p_x e^{-\eta_0|x|}$, as these are higher order and appear in $N(R, \phi, p)$. Similarly, we also obtain \eqref{def-Q} and the bounds \eqref{good-Re} for the remainder $\mathcal{N}(R,\phi,p)$.
This completes the proof of Lemma~\ref{lem-nonlinearUp}.

%%%%%%%%%

\subsection{Proof of Proposition~\ref{prop-Uapprox}}\label{sec-Asymptotic}
We next turn to the construction of the approximate solution to (\ref{eqs-nonUp}) and prove Proposition~\ref{prop-Uapprox}. For simplicity, let us first consider the case where $\delta^\pm$ are constants. Define
\begin{equation}\label{eqs-approx}
\mathcal{F}(U): =  (\partial_t - \mathcal{L}^a)U  - \mathcal{T}(p) -  \mathcal{Q}(R ,\phi,p) ,
\end{equation}
where $\mathcal{L}^a$ is the approximate linear operator defined by 
$$\mathcal{L}^a = D_2 \partial_x^2 - 2 \varphi_x D_1 \partial_x  + D_0^\varphi(x) + D_0^\infty, \qquad D_0^\infty: = -2 r_0^2 \begin{pmatrix} 1 - 2 \gamma_1 r_0^2 &  0\\
\beta  -2\gamma_2 r_0^2 & 0\end{pmatrix} . $$
See \eqref{redef-L}. By Lemma~\ref{lem-NBexistence}, it follows that 
\begin{equation}\label{DL-exp} (|\mathcal{L}- \mathcal{L}^a)U| = \cO(e^{-\eta_0|x|} |R|).\end{equation}
Our goal in this section is to construct $U^a$ so that $\mathcal{F}(U^a)  = 0$ up to the good error term as claimed in Proposition~\ref{prop-Uapprox}.  

The zero-order term $D_0^\varphi$ is asymptotically zero. However, we keep $D_0^\varphi(x)$ in the definition of $\mathcal{L}^a$ precisely because it may be viewed as part of the  first- and second-order derivative terms when acting on $(R,r\phi)$. To see this, define 
$$\mathcal{L}^a_1 = D_2 \partial_x^2 - 2 \varphi_x D_1 \partial_x  + D_0^\varphi(x) ,$$
and let $Z$ to be the diagonal matrix $\mathrm{diag}(1,r)$ so that $(R,r\phi) = Z(R,\phi)$. A direct calculation shows that
\[
\mathcal{L}_1^a \begin{pmatrix} R\\r\phi\end{pmatrix} = \Big(D_2 Z \partial_x^2 - 2 \varphi_x D_1Z  \partial_x \Big)\begin{pmatrix}R\\\phi\end{pmatrix} + \begin{pmatrix} 0 & -2\alpha r_x \\ 0 & 2r_x \end{pmatrix} \partial_x \begin{pmatrix}R\\\phi\end{pmatrix}.
\]
Thus, we see that the right hand side consists of no zero-order term. This cancellation is crucial to our analysis as it implies that the zero-order term in $\mathcal{L}^a$ involves $R$ only, which leads to the faster decay of $R$,  relative to $\phi$. 

To extract the governing (leading order) equations in the system \eqref{eqs-approx}, we first diagonalize the zero order constant matrix $D_0^\infty$. We can write 
$$ D_0^\infty = S \begin{pmatrix} -2 r_0^2  (1 - 2\gamma_1 r_0^2) & 0 \\ 0 & 0 \end{pmatrix} S^{-1}, \qquad \text{with} \qquad S: = \begin{pmatrix} 1 & 0\\ \beta_* & -1 \end{pmatrix}$$,
where we recall that $\beta_* = \frac{\beta - 2 \gamma_2 r_0^2}{1-2\gamma_1 r^2_0}$. Define the change of variables $\hat U = (\hat R, r\hat \phi) = S^{-1} U$, and set
\begin{equation}\label{eqs-hatU} 
\hat{\mathcal{F}} (\hat U): =  \Big(\partial_t - S^{-1} \mathcal{L}_1^a S - S^{-1}D_0^\infty S\Big) \hat U - S^{-1}\Big(\mathcal{T}(p) +  \mathcal{Q}(R ,\phi,p)\Big),
\end{equation}
where $(R,\phi) = Z^{-1} S \hat U$. It then follows that $$\hat{\mathcal{F}}(\hat U) = S^{-1} \mathcal{F}(U),$$ with $\mathcal{F}(\cdot)$ defined as in \eqref{eqs-approx}. 
We note that due to the structure of $S^{-1}D_0^\infty S$, the zero order terms in $\hat{\mathcal{F}}(\hat U)$ appear only in the first component. Using this structural property of \eqref{eqs-hatU}, we will choose the Ansatz for $\hat R$ so that the zero order term in the $\hat R$ equation cancels with the leading order terms involving $\hat \phi$ and $p$, that come from the functions $\mathcal{T}$ and $\mathcal{Q}$. 

To compute the terms in $\hat{F}(\hat U)$, notice that
\begin{eqnarray*}
\mathcal{L}_1^a S\hat U &=& \mathcal{L}_1^a S Z \begin{pmatrix} \hat R \\ \hat \phi \end{pmatrix} \\
&=& [D_2 \partial_x^2 - 2 \varphi_x D_1 \partial_x + D_0^\varphi] \begin{pmatrix} 1 & 0 \\ \beta^* & -r \end{pmatrix} \begin{pmatrix} \hat R \\ \hat \phi \end{pmatrix} \\
&=&  \left[ \begin{pmatrix} 1-\alpha \beta^* & r\alpha \\ \alpha + \beta^* & -r \end{pmatrix} \partial_x^2 + \begin{pmatrix} -2\varphi_x(\alpha+\beta^*) & 2\alpha r_x + 2 \varphi_x r \\ 2\varphi_x(1-\alpha\beta^*) & -2r_x + 2 \alpha \varphi_x r \end{pmatrix}\partial_x  + \begin{pmatrix} \frac{\beta^*}{r}(2\varphi_xr_x + \alpha r_{xx}) & 0 \\ \frac{\beta^*}{r}(2\alpha\varphi_xr_x - r_{xx}) & 0 \end{pmatrix}\right]\begin{pmatrix} \hat R \\ \hat \phi \end{pmatrix} 
\end{eqnarray*}
and so
\begin{eqnarray*}
S^{-1}\mathcal{L}_1^a S\hat U &=& \begin{pmatrix} (1 - \alpha\beta_*)\partial_x^2 - 2(\alpha+\beta_*)\varphi_x \partial_x & \alpha  r\partial_x^2 + 2 (r \varphi_x + \alpha r_x) \partial_x\\
-(1+\beta_*^2) (\alpha \partial_x^2 + 2\varphi_x \partial_x) & (1+\alpha \beta_*)r\partial_x^2 + 2[ (\beta^*-\alpha)r \varphi_x + (1 + \alpha \beta^*)r_x] \partial_x 
\end{pmatrix} \begin{pmatrix} \hat R\\\hat \phi\end{pmatrix} \\
&& \quad + \begin{pmatrix} \frac{\beta^*}{r}(2\varphi_xr_x + \alpha r_{xx}) & 0 \\ \frac{(\beta^*)^2}{r}(2\varphi_xr_x + \alpha r_{xx}) + \frac{\beta^*}{r}(2\alpha\varphi_xr_x - r_{xx}) & 0 \end{pmatrix} \begin{pmatrix} \hat R\\\hat \phi\end{pmatrix}. 
\end{eqnarray*}
Note that the zero-order terms in the $\hat R$ and $\hat \phi$ equation are exponentially localized in space, and so they will not contribute to the leading order dynamics. 
Similarly, we have 
$$S^{-1} \mathcal{T}(p) = p_t \begin{pmatrix} r_x \\ \beta_* r_x - r \varphi_x\end{pmatrix} + r \varphi_x p_{xx} \begin{pmatrix} \alpha \\ 1 + \alpha \beta_*\end{pmatrix} - 2r \varphi_x^2 p_x \begin{pmatrix} -1 \\ \alpha - \beta_* \end{pmatrix}$$
and
\begin{eqnarray*}
S^{-1} \mathcal{Q}(R ,\phi,p) &=&  \begin{pmatrix}   2\varphi_x\hat \phi_x \hat R - r\hat \phi_x^2  - 3r\hat R^2 + 10 \gamma_1 r^3 \hat R^2 - 3 r \varphi_x^2 p_x^2 - 4 r \varphi_x p_x \hat \phi_x + 2 \varphi_x^2 \hat R p_x 
\\ (\alpha-\beta_*)r(\hat \phi_x + \varphi_x p_x)^2 + \frac{4r_0^3(\gamma_1 \beta - \gamma_2)}{1 - 2 \gamma_1 r_0^2}\hat R^2
\end{pmatrix}.
\end{eqnarray*}

Next, let us denote by $\Pi_R$ and $\Pi_\phi$ the projection on the first and second component of a vector in $\mathbb{R}^2$, respectively. First consider the $\hat{R}$ equation, by taking the projection $\Pi_R$ of \eqref{eqs-hatU}. We wish to chose and approximate solution so that all leading orders terms in \eqref{eqs-hatU} cancel. First consider the terms that are linear in $\hat R,\hat \phi, p$:
$$
\begin{aligned}
-2(r \varphi_x + \alpha r_x) \hat \phi_x  + 2 r_0^2  (1 - 2\gamma_1 r_0^2) \hat R  = 2r \varphi_x^2 p_x. 
 \end{aligned}
 $$
Recall that we are primarily concerned with the behavior at $\pm \infty$. Since $r \varphi_x \to r_0$ as $x \to \pm\infty$, we therefore define
\begin{equation}\label{def-hatR0} \hat R ^a_0: = \frac{k_0}{ r_0 (1 - 2\gamma_1 r_0^2) } \Big( \hat \phi^a_x + \varphi_x p_x\Big),\end{equation}
for some $\hat \phi^a$ to be determined later. It is then clear that the function $\hat U^a_0  = (\hat R_0^a,r\hat \phi^a)$ with $\hat R_0^a$ defined as in \eqref{def-hatR0} approximately solves the first equation \eqref{eqs-hatU} in the following sense:
\begin{equation}\label{1st-approxUhat1} 
\Pi_R \hat{\mathcal{F}}(\hat U_0^a) = \cO(e^{-\eta_0 |x|} (|\hat \phi^a_x| + |p_x|))+ \cO\Big(|\hat \phi_{xx}^a| + |\hat \phi_x^a|^2 + |p_{xx}| + |p_x|^2 \Big).\end{equation}
This is not quite good enough, due to the terms that are quadratic in $\hat \phi_x^a$ -- recall that we need only the good terms appearing in \eqref{def-Good}. To fix this, we revise the Ansatz \eqref{def-hatR0} by introducing 
\begin{equation}\label{def-hatRa-re} 
\hat R ^a: = \frac{k_0}{ r_0 (1 - 2\gamma_1 r_0^2) } \Big( \hat \phi^a_x + \varphi_x p_x\Big) +\hat R^a_1 = \hat R^a_0 + \hat R^a_1,
\end{equation}
with $\hat R^a_1$ to be chosen so that it takes care of all quadratic nonlinear or next order decaying terms in the error that appear on the right of \eqref{1st-approxUhat1}. Precisely, we take
\begin{eqnarray}
2r_0^2(1-2\gamma_1 r_0^2)\hat R_1^a &=& - (\hat R_0^a)_t - 2 (\alpha + \beta_*)\varphi_x (\hat R_0^a)_x + \alpha r (\hat \phi_{xx} + \varphi_x p_{xx}) + 2 \varphi_x (\hat\phi_x + \varphi_x p_x) \hat R_0^a - r \hat \phi_x^2 \nonumber \\
&&\qquad -3r \varphi_x^2 p_x^2 -4r \varphi_x p_x \hat \phi_x - (3r - 10 \gamma_1 r^3)(\hat R_0^a)^2,
\label{E:defR1}
\end{eqnarray}
Here the $t$-derivative term appearing on the right can be replaced by its spatial derivatives thanks to the estimate \eqref{Dt-phi}. Thus, if we define $\hat U^a = (\hat R^a,r\hat \phi^a)$, then $\hat U^a$ approximately solves the first equation in \eqref{eqs-hatU} with a good error as claimed:
\begin{equation}\label{Good-UR}  
\Pi_R \hat{\mathcal{F}}(\hat U ^a) = \cO(\Upsilon (\hat \phi^a_x,p_x)),
\end{equation} 
where $\Upsilon$ is defined in \eqref{def-Good}. 

It remains to show that 
\begin{equation}\label{Good-Uphi} \Pi_\phi \hat{\mathcal{F}}(\hat U ^a) = \cO(\Upsilon (\hat \phi^a_x,p_x)).\end{equation}
Observe that since there is no zero order term $\hat R$ appearing in $\Pi_\phi\hat{\mathcal{F}}(\hat U^a)$, the contribution of $\hat R^a_1$ from \eqref{def-hatRa-re} can be absorbed into the good error term. For this reason, we shall check the contribution of the first term in $\hat R^a$, or precisely $\hat R^a_0$ defined as in \eqref{def-hatR0}. By plugging the Ansatz for $\hat R^a$ into $\hat{\mathcal{F}}(\hat U^a)$ and using \eqref{Dt-phi}, it follows by direct calculations that
\begin{equation}\label{Burgers}\begin{aligned}
\Pi_\phi \hat{\mathcal{F}}(\hat U ^a)  &=  r\left[ L_B \hat\phi^a + \varphi_x L_B  p  -  q\Big(\hat \phi^a_x+\varphi_x p_x\Big)^2\right]  + \cO(\Upsilon(\hat \phi^a_x,p_x))
 \end{aligned}\end{equation}
where we recall that $L_B$ is defined by
$$L_B = \partial_t + 2(\alpha - \beta_*) \varphi_x \partial_x- \mathrm{d} \partial_x^2 .$$ 
By Remark~\ref{rem-Burgerschoice}, we have $L_B (\hat\phi^a \pm k_0p) = q(\hat \phi_x^a \pm k_0p_x)^2 $
for each $+/-$ case. This, together with the fact that $\varphi_x \to \pm k_0$ as $|x|\to \infty$, shows that the leading term on the right hand side of \eqref{Burgers} vanishes. We thus obtain \eqref{Good-Uphi} as claimed. 

Going back to the original variable $U^a = S \hat U^a$, we find that
$$ \mathcal{F}(U^a) =   (\partial_t - \mathcal{L}^a)U^a  - \mathcal{T}(p) -  \mathcal{Q}(R ^a,\phi^a,p)  = \cO(\Upsilon(\hat \phi^a_x, p_x)).$$
This together with \eqref{DL-exp} establishes Proposition~\ref{prop-Uapprox} in the case that $\delta^\pm$ are constants. The case when $\delta^\pm(t)$ are not constants follows similarly. 

%%%%%

\subsection{Proof of Proposition~\ref{prop-corUapp}}\label{sec-proofProp52}
We now choose $\mathcal{B}(x,t)$ to be $e(x,t+1)$, the difference of error functions defined in \eqref{def-epm}. We then have \begin{eqnarray*}
\Big|\mathcal{B} (1-\mathcal{B})\Big |(x,t) &\leq & C \left(\rme^{-\frac{(x + c_\rmg t)^2}{8\mathrm{d}(t+1)}} +\rme^{-\frac{(x - c_\rmg t)^2}{8\mathrm{d}(t+1)}} \right) \le C (1+t)^{1/2} \theta(x,t),
\end{eqnarray*}
and so 
\begin{eqnarray}\label{est-dB}
\left[ \frac{\mathcal{B}}{1+\delta^\pm\mathcal{B}}  - \frac{\mathcal{B}}{1+\delta^\pm}  \right]  (x,t) &\leq & C|\delta^\pm|\left(\rme^{-\frac{(x + c_\mathrm{g}t)^2}{8\rmd(t+1)}} +\rme^{-\frac{(x - c_\mathrm{g}t)^2}{8\rmd(t+1)}} \right) \le C (1+t)^{1/2} \theta(x,t)|\delta^\pm|.
\end{eqnarray}
We also have
\begin{eqnarray*}
\Big|\frac{\mathcal{B}_x}{1+\delta^\pm \mathcal{B}}\Big|(x,t) &\leq & C(1+t)^{-1/2} \left(\rme^{-\frac{(x + c_\mathrm{g}t)^2}{8\rmd(t+1)}} +\rme^{-\frac{(x - c_\mathrm{g}t)^2}{8\rmd(t+1)}} \right)  \le C \theta(x,t).
\end{eqnarray*}
Thus,
$$ |\partial_x^k \hat \phi^a|  \le C (1+t)^{1/2-k/2} \theta(x,t) (|\delta^+ | + |\delta^- |), \qquad |\partial_x^k p|  \le C (1+t)^{1/2-k/2} \theta(x,t) (|\delta^+ | + |\delta^- |),$$
for $k\ge 1$. Here these bounds are valid uniformly in $x\in\mathbb{R}$ and $t\geq0$, provided that $\delta^\pm$ is sufficiently small, with a bound on $\delta^\pm$ that depends only on the maximum of $|\mathcal{B}|$. By using these bounds, it follows immediately that 
\begin{equation}\label{est-Gapp}
\begin{aligned}
\Upsilon(\hat \phi^a_x,p_x)  & \le C (|\delta^+ | + |\delta^- |)e^{-\eta_0 |x|}  \theta(x,t)  + C(1+t)^{-1}\theta(x,t) (|\delta^+|^2 + |\delta^-|^2).
\end{aligned}
\end{equation}

\begin{proof}[Proof of Proposition~\ref{prop-corUapp}] The estimate for ${\mathcal{R}}^{app}_1(x,t, \delta^\pm) $ follows directly from \eqref{eqs-Uapp} and \eqref{est-Gapp}. It remains to give details of $\Sigma^\pm(x,t,\delta^\pm)$ introduced in Proposition~\ref{prop-Uapprox}. 
By \eqref{Burgers-solns}, we get
$$\begin{aligned}
  \frac{\partial p}{\partial \delta^+}(x,t,\delta^\pm(t)) & = \frac{\mathrm{d}}{2qk_0} \frac{\mathcal{B}}{1 + \delta^+ \mathcal{B}}, \qquad   \frac{\partial p}{\partial \delta^-}(x,t,\delta^\pm(t))  = -  \frac{\mathrm{d}}{2qk_0} \frac{\mathcal{B}}{1 + \delta^- \mathcal{B}} \\
    \frac{\partial \hat \phi^a}{\partial \delta^+}(x,t,\delta^\pm(t)) & =  \frac{\mathrm{d}}{2q} \frac{\mathcal{B}}{1 + \delta^+ \mathcal{B}} , \qquad   \frac{\partial \hat\phi^a}{\partial \delta^-}(x,t,\delta^\pm(t))  =  \frac{\mathrm{d}}{2q} \frac{\mathcal{B}}{1 + \delta^- \mathcal{B}} .
   \end{aligned}$$
By definition \eqref{def-hatRa-re}, we obtain a similar result for $\frac{\partial }{\partial \delta^\pm} \hat R^a(x,t,\delta^\pm(t)) $. This together with \eqref{est-dB} yields $$\begin{aligned}
 \frac{\partial p}{\partial \delta^\pm}(x,t,\delta^\pm(t)) & = \pm  \frac{\mathrm{d}}{2qk_0}\frac{\mathcal{B}}{1 + \delta^\pm } + \cO(|\delta^\pm|) (1+t)^{1/2} \theta(x,t)\\
    \frac{\partial \hat \phi^a}{\partial \delta^\pm}(x,t,\delta^\pm(t)) & =  \frac{\mathrm{d}}{2q}\frac{\mathcal{B}}{1 + \delta^\pm}  + \cO(|\delta^\pm|) (1+t)^{1/2} \theta(x,t),
   \end{aligned}$$
and 
$$\begin{aligned}
 \frac{\partial \hat R^a}{\partial \delta^\pm}(x,t,\delta^\pm(t)) & =  \frac{\mathrm{d}}{2qr_0(1-2\gamma_1r_0^2)} \frac{\mathcal{B}_x}{1 + \delta^\pm } (k_0 \pm \varphi_x) + \cO(|\delta^\pm|) (1+t)^{1/2} \theta(x,t) .   \end{aligned}$$
Putting these estimates into $\Sigma^\pm(x,t,\delta^\pm)$ (see \eqref{def-Sgpm}), we obtain $$\begin{aligned}
  \Sigma^\pm(x,t) &=  \frac{\mathrm{d}}{2q(1+ \delta^\pm)} \Big[ \begin{pmatrix}\frac{\varphi_x \pm k_0}{r_0(1-2\gamma_1r_0^2)} \mathcal{B}_x  \\  r\mathcal{B}
\end{pmatrix} \mp \frac{\mathcal{B}}{k_0 } \begin{pmatrix}r_x\\ r \varphi_x 
\end{pmatrix}\Big]+ \cO(|\delta^\pm|) (1+t)^{1/2} \theta(x,t) .
\end{aligned}
$$
Here the error term $\cO(|\delta^\pm|) (1+t)^{1/2} \theta(x,t) $ times $\dot \delta^\pm$ can be absorbed into the last term in the remainder $\mathcal{R}^{app}_1 (x,t,\delta^\pm) $. Rearranging terms, we obtain the proposition as claimed. 
\end{proof}

\begin{proof}[Proof of Lemma~\ref{lem-linEest}] Note that
\[
\mathcal{E}^\pm =  \mathcal{B} V_2 \mp \frac{\mathcal{B}}{k_0} V_1 + \mathcal{O}(\theta),
\]
where $V_{1,2}$ are the eigenfunctions of $\mathcal{L}$ defined in \eqref{def-V12}. A direct calculation shows that
\[
(\partial_t - \mathcal{L}) \mathcal{E}^\pm = \mathcal{O}(e^{-\eta|x|}\mathcal{B}_x) + \mathcal{O}((\partial_t - \mathcal{L}) \theta).
\]
The final term on the right hand side is of the claimed order because the Gaussian terms each satisfy the asymptotic equation at one end ($\pm \infty$), where the maximum $x = \pm c_\mathrm{g} t$ is located, and hence decay faster at the other end, away from this maximum.
\end{proof}

%%%%%%%%%%%%%%%%%%%%%%%%%%%%%%%%%%%%%%%%%%%%%%%

\section{Stability analysis}\label{sec-stability}

In this section, we show how the Ansatz \eqref{def-BB} and the approximate solution $(\phi^a, R^a, p)$, constructed in \S\ref{S:app-soln}, can be used to prove Theorem~\ref{theo-main}.

\subsection{Nonlinear perturbed equations}
Recall, that the Ansatz \eqref{def-BB} is given by
\[
B(x,t): =A(x+p(x,t),t) = [r(x) + R(x, t)]e^{i(\varphi(x) + \phi(x,t)}\rme^{-i\omega_0 t},
\]
where $A(x,t)$ solves the qCGL equation \eqref{eqs-qCGL}. The functions $(R, \phi)$ represents the perturbation, and we define $(\tilde R, \tilde \phi)$ via
\begin{equation}\label{def-Rphi} 
\begin{pmatrix} R\\\phi \end{pmatrix}(x,t) = \begin{pmatrix} R^a\\\phi^a \end{pmatrix} (x,t,\delta^\pm(t))+ \begin{pmatrix} \tilde R\\ \tilde \phi \end{pmatrix} (x,t),
\end{equation}
where $U^a = (R^a,r\phi^a)$ is the approximate solution constructed in Proposition~\ref{prop-corUapp}. 
Note that the functions $\delta^\pm(t)$ are still arbitrary smooth functions. They will be chosen, in equation \eqref{def-intdelta12} below, so that they cancel non-decaying terms in the nonlinear iteration scheme. The following lemma is then a direct combination of Lemma~\ref{lem-nonlinearUp} and Proposition~\ref{prop-corUapp}. 
\begin{Lemma}\label{lem-tUsystem}  Let $\delta^\pm = \delta^\pm(t)$ be arbitrary smooth functions. The perturbation function $\tilde U = (\tilde R,r\tilde \phi)$ defined through \eqref{def-Rphi} satisfies 
\begin{equation} \label{tUsystem}
\begin{aligned}
(\partial_t - \mathcal{L}) \tilde U  &= -\frac{\mathrm{d}}{2q} \sum_\pm \frac{\dot \delta^\pm(t)  }{(1+ \delta^\pm(t))} \mathcal{E}^\pm(x,t) +  \mathcal{N}_1(\tilde R,\tilde \phi, \delta^\pm )(x,t),\end{aligned}\end{equation}
where $\mathcal{E}^\pm(x,t)$ is defined as in \eqref{def-Epm} and the remainder $  \mathcal{N}_1(\tilde R,\tilde \phi, \delta^\pm )$ satisfies
\begin{equation}\label{bound-NN}\begin{aligned}
\mathcal{N}_1(\tilde R,\tilde \phi, \delta^\pm )(x,t)  & \le C (|\delta^+ | + |\delta^- |)e^{-\eta_0 |x|}  \theta(x,t)  + C(1+t)^{-1}\theta(x,t) (|\delta^+|^2 + |\delta^-|^2)
\\&\quad + C(|\delta^+|+|\delta^-|)(|\dot \delta^+|+|\dot \delta^-|)(1+t)^{1/2}\theta(x,t) 
\\& \quad + (|\delta^+(t)|+|\delta^-(t)|)\theta(x,t)L(\tilde R,\tilde \phi)   + Q(\tilde R, \tilde \phi) 
\end{aligned} \end{equation}
with  \begin{equation}\label{def-LQ}\begin{aligned}
L(\tilde R,\tilde \phi)  &= \cO( |\tilde R| + |\tilde R_x| +|\tilde  \phi_x|), \\
Q(\tilde R,\tilde \phi)  &= \cO(|\tilde R|^2 + |\tilde R_x|^2 +|\tilde  \phi_x|^2 + |\tilde R\tilde\phi_{xx}|+ |\tilde R\tilde R_{xx}|+|\tilde R \tilde \phi_t|).
\end{aligned}\end{equation}
\end{Lemma}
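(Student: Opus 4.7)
\medskip
\noindent\textbf{Proof proposal.} The plan is purely algebraic: subtract the equation for $U^a$ from the equation for $U=U^a+\tilde U$, and then absorb each surviving term into one of the four types listed in \eqref{bound-NN}. Lemma~\ref{lem-nonlinearUp} gives
\[
(\partial_t-\mathcal{L})U = \mathcal{T}(p) + \mathcal{Q}(R,\phi,p) + \mathcal{N}(R,\phi,p),
\]
while Proposition~\ref{prop-corUapp} gives
\[
(\partial_t-\mathcal{L})U^a = \mathcal{T}(p) + \mathcal{Q}(R^a,\phi^a,p) + \tfrac{\mathrm{d}}{2q}\sum_\pm \tfrac{\dot\delta^\pm(t)}{1+\delta^\pm(t)}\,\mathcal{E}^\pm(x,t) + \mathcal{R}^{app}_1(x,t,\delta^\pm).
\]
Since both equations use the same displacement $p$, the linear residual $\mathcal{T}(p)$ cancels upon subtraction, and we arrive at \eqref{tUsystem} with
\[
\mathcal{N}_1 = \bigl[\mathcal{Q}(R,\phi,p)-\mathcal{Q}(R^a,\phi^a,p)\bigr] + \mathcal{N}(R,\phi,p) - \mathcal{R}^{app}_1(x,t,\delta^\pm).
\]
The three summands in \eqref{bound-NN} that depend only on $\delta^\pm$ and $\dot\delta^\pm$ come directly from the estimate \eqref{bound-Rdelta} on $\mathcal{R}^{app}_1$, so only the quadratic difference and the higher-order remainder $\mathcal{N}$ need to be analyzed.

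For the quadratic difference, substitute $R=R^a+\tilde R$ and $\phi_x=\phi^a_x+\tilde\phi_x$ into each bilinear product in \eqref{def-Q}. Every such product splits as pure-approximate plus mixed plus pure-perturbation. The pure-approximate pieces reconstruct $\mathcal{Q}(R^a,\phi^a,p)$ and cancel with the subtraction. Pure-perturbation pieces such as $\tilde R^2$, $\tilde\phi_x^2$, $\tilde R\tilde\phi_x$, and $\tilde R\tilde\phi_t$ fit directly into $Q(\tilde R,\tilde\phi)$. For the mixed terms I would use the pointwise bounds on the approximate solution from \S\ref{sec-proofProp52},
\[
|R^a(x,t)|+|\phi^a_x(x,t)|+|p_x(x,t)|+|p_t(x,t)| \le C(|\delta^+|+|\delta^-|)\,\theta(x,t),
\]
which show that any product of an approximate factor with one of $\tilde R$, $\tilde R_x$, $\tilde\phi_x$ is dominated by $(|\delta^+|+|\delta^-|)\theta(x,t)\,L(\tilde R,\tilde\phi)$, matching the last piece of \eqref{bound-NN}.

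For $\mathcal{N}(R,\phi,p)$, the structural bound \eqref{good-Re} is a sum of cubic-or-higher monomials in $(R,R_x,\phi_x,\phi_{xx},p_x,p_{xx},p_t)$ together with the localized term $e^{-\eta_0|x|}|p_x|$. Performing the same $R=R^a+\tilde R$, $\phi=\phi^a+\tilde\phi$ decomposition, each monomial yields: (i) purely approximate-and-$p$ contributions, which by the bounds just quoted are at least cubic in $\delta^\pm$ and are comfortably absorbed into the $(|\delta^+|^2+|\delta^-|^2)(1+t)^{-1}\theta(x,t)$ summand, while the exceptional piece $e^{-\eta_0|x|}|p_x|\lesssim (|\delta^+|+|\delta^-|)e^{-\eta_0|x|}\theta(x,t)$ matches the first summand of \eqref{bound-NN}; (ii) mixed contributions with at least one approximate factor and one perturbation factor, each dominated by $(|\delta^+|+|\delta^-|)\theta(x,t)\,L(\tilde R,\tilde\phi)$; and (iii) purely perturbative contributions, which are at worst $\cO(\tilde R^2 + \tilde R_x^2 + \tilde\phi_x^2 + \tilde R\tilde\phi_{xx} + \tilde R\tilde R_{xx} + \tilde R\tilde\phi_t)$ and thus lie in $Q(\tilde R,\tilde\phi)$ as defined in \eqref{def-LQ}.

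The step requiring most care is bookkeeping the mixed terms in $\mathcal{N}$ carrying $p_{xx}$ or $p_t$ paired with $\tilde R$, $\tilde R_x$, or $\tilde\phi_x$: one must verify that no bare linear perturbation factor survives without a small prefactor. Using the derivative estimates $|\partial_x^k p|+|\partial_x^k \hat\phi^a|\lesssim (1+t)^{(1-k)/2}\theta(x,t)(|\delta^+|+|\delta^-|)$ from \S\ref{sec-proofProp52}, together with \eqref{Dt-phi} to convert $p_t$ bounds into spatial derivative bounds, every such product can be unambiguously routed to one of the four pieces of \eqref{bound-NN}, completing the verification.
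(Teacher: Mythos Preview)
Your argument is correct and follows exactly the route of the paper: subtract Lemma~\ref{lem-nonlinearUp} from Proposition~\ref{prop-corUapp}, identify $\mathcal{N}_1 = \mathcal{N}(R,\phi,p) + [\mathcal{Q}(R,\phi,p)-\mathcal{Q}(R^a,\phi^a,p)] - \mathcal{R}^{app}_1$, and then invoke \eqref{good-Re} and \eqref{bound-Rdelta}. The paper's own proof stops after writing down $\mathcal{N}_1$ and says the estimate ``follows directly'', whereas you carry out the term-by-term bookkeeping explicitly; one minor imprecision is the phrase ``at least cubic in $\delta^\pm$'' for the purely-approximate pieces of $\mathcal{N}$, since some monomials in \eqref{good-Re} (e.g.\ $|R_x\phi_x|$, $|RR_x|$) are only quadratic --- but your routing of those terms into the $(|\delta^+|^2+|\delta^-|^2)(1+t)^{-1}\theta$ summand is still correct because each carries an extra derivative and hence an extra $(1+t)^{-1/2}$ from the bounds in \S\ref{sec-proofProp52}.
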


\begin{proof} By Lemma~\ref{lem-nonlinearUp}, the perturbation $U = (R,r\phi)$ solves 
\[
(\partial_t - \mathcal{L}) U -   \mathcal{T}(p) - \mathcal{Q}(R,\phi,p) = \mathcal{N}(R,\phi,p), 
\]
and, by Proposition~\ref{prop-corUapp}, the approximate solution $U^a = (R^a,r\phi^a)$ solves 
\[
 (\partial_t - \mathcal{L})U^a  - \mathcal{T}(p) - \mathcal{Q}(R^a,\phi^a,p) = \frac{\mathrm{d}}{2q}\sum_\pm \frac{\dot \delta^\pm(t)  }{ (1+ \delta^\pm(t))} \mathcal{E}^\pm(x,t)  +{\mathcal{R}}^{app}_1 (x,t, \delta^\pm).
\]
Here the same notations as in Lemma~\ref{lem-nonlinearUp} and Proposition~\ref{prop-corUapp} are used. It then follows that the difference $\tilde U = U - U^a$ solves \eqref{tUsystem} with the remainder $\mathcal{N}_1(\tilde R,\tilde \phi, \delta^\pm )(x,t)$ defined by 
$$\mathcal{N}_1(\tilde R,\tilde \phi, \delta^\pm )(x,t): =   \mathcal{N}(R,\phi,p) + \mathcal{Q}(R,\phi,p)  - \mathcal{Q}(R^a,\phi^a,p)  -  {\mathcal{R}}^{app}_1 (x,t, \delta^\pm) .
$$
The claimed estimate for $\mathcal{N}_1(\tilde R,\tilde \phi, \delta^\pm )$ now follows directly from \eqref{good-Re} and \eqref{bound-Rdelta}.  
\end{proof}

%%%%%%

\subsection{Green function decomposition}
We recall the Green function decomposition of Proposition~\ref{prop-Green}:
$$\begin{aligned}
 \mathcal{G}(x,y,t) = e(x,t) V_1(x) \langle \psi_1(y),\cdot \rangle + e(x,t) V_2(x) \langle \psi_2(y),\cdot \rangle + \widehat{\mathcal{G}}(x,y,t), 
\end{aligned}
$$
where $e(x,t)$, defined in \eqref{def-epm}, is the sum of error functions and $\widehat{\mathcal{G}}(x,y,t)$ satisfies the Gaussian-like estimate
$$\widehat{\mathcal{G}} (x,y,t) = \sum_\pm\cO(t^{-\frac{1}{2}}\rme^{-\frac{(x-y\pm c_\mathrm{g}t)^2}{Mt}} ) +  \cO(\rme^{-\eta(|x-y|+t)}) .$$ 
Now observe that from the definitions of $V_j(x)$ in \eqref{def-V12} and $\mathcal{E}^\pm(x,t)$ in \eqref{def-Epm}, we have 
\begin{equation}\label{app-Epm} \mathcal{E}^\pm(x,t) = -\mathcal{B}(x,t) V_2
\mp \frac{\mathcal{B}(x,t)}{k_0}  V_1 + \cO(\theta(x,t)),
\end{equation}
where $\mathcal{B}(x,t) =  e(x,t+1)$. In addition, since $\psi_1(y)$ and $\psi_2(y)$ are exponentially localized and $e_x(x,t)$ and $\mathcal{B}_x^\pm(x,t)$ behave as Gaussians, we can write 
\begin{equation} e(x,t+1) V_1(x) \langle \psi_1(y),\cdot \rangle + e(x,t+1) V_2(x) \langle \psi_2(y),\cdot \rangle = \mathcal{E}^+(x,t)  \langle \Psi^+(y),\cdot \rangle + \mathcal{E}^-(x,t)  \langle \Psi^-(y),\cdot \rangle, \end{equation}
where
\[
\Psi^+ = - \frac{1}{2}\psi_1 - \frac{k_0}{2}\psi_2, \qquad \Psi^- = -\frac{1}{2}\psi_1 + \frac{k_0}{2}\psi_2,
\]
with an error of $ \cO(e^{-\eta|y|}(|\mathcal{B}_x | + |e_x|) = \cO(e^{-\eta|y|}\theta(x,t))$ that can easily be absorbed into the bound for $\widehat{\mathcal{G}}(x,y,t)$. In addition, we observe that the difference $e(x,t) - e(x,t+1)$ satisfies  
\begin{eqnarray*}
|e(x,t) - e(x,t+1)|  &=& \left|\int_{t}^{t+1} e_\tau(x,\tau)\rmd \tau\right| \le Ct^{-\frac{1}{2}}\rme^{-\frac{(x\pm c_\mathrm{g}t)^2}{8t}}, \qquad t\ge 1,
\end{eqnarray*}
and
$$|e(x,t) - e(x,t+1)| \leq C \int_{\frac{x^2}{t+1}}^{\frac{x^2}{t}} \rme^{-z^2} \rmd z \leq C \rme^{-x^2/2}\rme^{-\eta t}, \qquad  t\le 1.$$
These, together with the localization property of $\psi_j(y)$, show that the term 
$$(e(x,t) - e(x,t+1)) V_1(x) \langle \Psi_1(y),\cdot \rangle + (e(x,t) - e(x,t+1) )V_2(x) \langle \Psi_2(y),\cdot \rangle $$
can be absorbed into the bound for $\widehat{\mathcal{G}}(x,y,t)$. Therefore, let us decompose the the Green function as follows: 
\begin{equation}\label{Green-decompose}
 \mathcal{G}(x,y,t) = \mathcal{E}^+(x,t)  \langle \Psi^+(y),\cdot \rangle + \mathcal{E}^-(x,t)  \langle \Psi^-(y),\cdot \rangle + \widetilde{\mathcal{G}}(x,y,t), 
\end{equation}
where $\widetilde{\mathcal{G}}(x,y,t)$ satisfies the same bound as $\widehat{\mathcal{G}}(x,y,t)$. The motivation for this will become clear below.

We obtain the following simple lemma. 
\begin{Lemma}\label{lem-id-E12} For all $s,t\ge 0$
$$\begin{aligned}
\int_{\mathbb{R}}  \mathcal{G}(x,y,t-s) & \mathcal{E}^\pm(y,s) \; \rmd y 
=  \mathcal{E}^\pm(x,t) +  \int_s^t \int_{\mathbb{R}}  \mathcal{G}(x,y,t-\tau) \mathcal{R}^\pm(y,\tau)\; \rmd y\rmd \tau,
\end{aligned}$$
where 
\begin{equation}\label{bounds-SR}
|\mathcal{R}^\pm(y,s)| \leq C((1+s)^{-1} + \rme^{-\eta|y|})\theta(y,s).
\end{equation}
\end{Lemma}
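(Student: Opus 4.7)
The identity is essentially a Duhamel formula applied to $\mathcal{E}^\pm$ viewed as a solution of the inhomogeneous linear equation $(\partial_t - \mathcal{L})\mathcal{E}^\pm = f^\pm$, combined with the pointwise bound for $f^\pm$ already established in Lemma~\ref{lem-linEest}. So the plan is short and essentially computational.

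The first step is to set $V(x,\tau) := \mathcal{E}^\pm(x,\tau)$ and define
\[
f^\pm(x,\tau) := (\partial_\tau - \mathcal{L})\mathcal{E}^\pm(x,\tau),
\]
so that $V$ solves the inhomogeneous linear problem $(\partial_\tau - \mathcal{L})V = f^\pm$ with ``initial'' data $V(\cdot,s) = \mathcal{E}^\pm(\cdot,s)$ at time $\tau = s$. Applying the Duhamel representation of Lemma~\ref{lem-Greenfn} on the interval $[s,t]$ then gives
\[
\mathcal{E}^\pm(x,t) = \int_{\mathbb{R}} \mathcal{G}(x,y,t-s)\mathcal{E}^\pm(y,s)\,\rmd y + \int_s^t \int_{\mathbb{R}} \mathcal{G}(x,y,t-\tau) f^\pm(y,\tau)\,\rmd y\,\rmd\tau.
\]
Rearranging this identity produces exactly the claimed formula with $\mathcal{R}^\pm := -f^\pm = -(\partial_\tau - \mathcal{L})\mathcal{E}^\pm$.

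For the pointwise bound \eqref{bounds-SR}, I invoke Lemma~\ref{lem-linEest} directly, which asserts
\[
(\partial_\tau - \mathcal{L})\mathcal{E}^\pm(y,\tau) = \mathcal{O}\bigl((1+\tau)^{-1}\theta(y,\tau)\bigr) + \mathcal{O}\bigl(\rme^{-\eta_0|y|}\theta(y,\tau)\bigr),
\]
so $|\mathcal{R}^\pm(y,\tau)| \le C((1+\tau)^{-1} + \rme^{-\eta|y|})\theta(y,\tau)$ for a suitable $\eta>0$, as required.

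The only mild point to verify is that the Duhamel formula is indeed applicable: one needs the integrals on the right-hand side to converge. This is immediate from the bounds on $\mathcal{G}$ in Proposition~\ref{prop-Green} together with the Gaussian decay of $\mathcal{E}^\pm$ (which is $\mathcal{O}(\theta)$ pointwise, by \eqref{app-Epm} and the decay of $\mathcal{B}_x$), so the convolution with the Gaussian-type kernel $\widehat{\mathcal{G}}$ and the eigenfunction part with exponentially localized adjoint $\psi_j$ are both well-defined. There is no substantive obstacle here; the content of the lemma is simply to package Lemma~\ref{lem-linEest} in the integral form that will be convenient in the nonlinear iteration of \S\ref{sec-stability}.
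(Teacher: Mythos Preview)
Your proof is correct and follows exactly the paper's approach: apply Duhamel (Lemma~\ref{lem-Greenfn}) to $\mathcal{E}^\pm$ with initial time $s$, using Lemma~\ref{lem-linEest} for the remainder bound. One small slip: $\mathcal{E}^\pm$ is not $\mathcal{O}(\theta)$ pointwise (the plateau term $\mathcal{B}\,V_2$ in \eqref{app-Epm} is merely bounded, not Gaussian-decaying), but boundedness of $\mathcal{E}^\pm$ together with the Gaussian-in-$(x-y)$ decay of $\widehat{\mathcal{G}}$ and the exponential localization of $\psi_j$ already suffices for the integrals to converge, so the argument stands.
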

\begin{proof} We recall from Lemma~\ref{lem-linEest} that
$$(\partial_t - \mathcal{L})  \mathcal{E}^\pm (x,t)= \mathcal{O}((1+t)^{-1} + \rme^{-\eta |x|}) \theta(x,t).$$
By applying the Duhamel principle to the solution of this equation with ``initial'' data at $t=s$, we easily obtain the lemma.
\end{proof}

%%%%%%%

\subsection{Initial data for the asymptotic Ansatz}

Let us denote $U^a_0(x): = U^a(x,0,\delta^\pm(0)), \tilde U_0(x) = \tilde U(x,0),$ $p_0(x): = p(x,0,\delta^\pm(0))$, and 
$$U_\mathrm{source}(x): = \begin{pmatrix} r(x) \\ r(x)\varphi(x) \end{pmatrix} , \qquad U_{\mathrm{source},p}(x): = \begin{pmatrix} r(x+p_0(x)) \\ r(x)\varphi(x+p_0(x)) \end{pmatrix} .$$ 
By \eqref{in-norm}, \eqref{def-Rphi} and the assumption on the initial data $A_\mathrm{in}(x)$, we have 
\begin{equation}\label{in-assumption} \Big \| e^{x^2/M_0}\Big( U^a_0(\cdot) + \tilde U_0(\cdot) + U_\mathrm{source}(\cdot) - U_{\mathrm{source},p}(\cdot)  \Big) \Big \|_{C^3(\mathbb{R})}\le \epsilon,
\end{equation}
for $\epsilon = \| A_\mathrm{in}(\cdot) - A_\mathrm{source}(\cdot)\|_\mathrm{in}$. 
We prove the following:
\begin{Lemma}\label{lem-initialdata} There are small constants $\delta_0^\pm$ so that if we take $\delta^\pm (0) = \delta^\pm_0$ in the definition of $U^a_0(x)$ and $p_0(x)$ then 
\begin{equation}\label{in-choice} \int_\mathbb{R} \langle \Psi^\pm(y),\tilde U_0(y)\rangle  \rmd y   =0\end{equation}
for each $+/-$ case, with $\Psi^\pm(\cdot)$ the same as in \eqref{Green-decompose}. In addition, $ \sum_\pm |\delta^\pm_0| \le C\epsilon$, 
and 
\begin{equation}\label{tU0-decay} \| e^{x^2/M_0} \tilde U_0(\cdot)  \|_{C^3(\mathbb{R})} \le C \epsilon,
\end{equation}
for some positive constant $C$.
\end{Lemma}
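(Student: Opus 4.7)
The plan is to formulate the two orthogonality conditions in \eqref{in-choice} as a simultaneous root-finding problem in $(\delta^+,\delta^-)\in\mathbb{R}^2$ and solve it by an implicit-function argument. Define
\[
F(\delta^+,\delta^-) := \Big( \int_\mathbb{R}\langle\Psi^+(y),\tilde U_0(y;\delta^\pm)\rangle\,\rmd y,\; \int_\mathbb{R}\langle\Psi^-(y),\tilde U_0(y;\delta^\pm)\rangle\,\rmd y\Big),
\]
where $\tilde U_0(y;\delta^\pm) = U(y,0;\delta^\pm)-U_0^a(y;\delta^\pm)$ and $U(y,0;\delta^\pm) = (R(y,0),\,r(y)\phi(y,0))$ is read off from the polar decomposition of $A_\mathrm{in}(x+p_0(x;\delta^\pm))$ via \eqref{def-BB}. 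Since $\Psi^\pm(y)=\cO(\rme^{-\eta|y|})$ and both $A_\mathrm{in}-A_\mathrm{source}$ and $U_0^a,\,p_0$ are Gaussian-localized in the $\rme^{x^2/M_0}$-weighted $C^3$ sense (admissible once $M_0>4\mathrm{d}$), the map $F$ is $C^1$ on a neighborhood of the origin. At $\delta^\pm=0$ the construction collapses to $p_0\equiv 0$ and $U_0^a\equiv 0$, so $\tilde U_0(\cdot;0)=(R_\mathrm{in}-r,\,r(\phi_\mathrm{in}-\varphi))$, and \eqref{in-norm} immediately gives $|F(0,0)|\le C\epsilon$.

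Next I would compute $DF(0,0)$ and show it is invertible. Differentiating the polar identity $A_\mathrm{in}(x+p_0) = [r+R(x,0)]\rme^{\rmi(\varphi+\phi(x,0))}$ implicitly in $\delta^\pm$ and using $R_\mathrm{in}=r+\cO(\epsilon)$, $\phi_\mathrm{in}=\varphi+\cO(\epsilon)$, one finds $\partial_{\delta^\pm}U(\cdot,0)|_0 = \pm\tfrac{\mathrm{d}}{2qk_0}\,e(\cdot,1)\,V_1 + \cO(\epsilon)$. The formulas for $\Sigma^\pm$ and $\partial_{\delta^\pm}p$ from the proof of Proposition \ref{prop-corUapp}, combined with the identity $\mathcal{E}^\pm = -\mathcal{B}V_2 \mp (\mathcal{B}/k_0)V_1 + \cO(\theta)$ from \eqref{app-Epm}, produce an exact cancellation of the $V_1$-contribution and yield $\partial_{\delta^\pm}U_0^a|_0 = -\tfrac{\mathrm{d}}{2q}\,e(\cdot,1)\,V_2 + \cO(\epsilon)$ independently of the sign. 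Consequently $\partial_{\delta^\pm}\tilde U_0|_0$ lies in the span of $e(\cdot,1)V_1$ and $e(\cdot,1)V_2$ to leading order, and a short computation using $\Psi^\pm = -\tfrac{1}{2}\psi_1 \mp \tfrac{k_0}{2}\psi_2$ shows that, up to an $\cO(\epsilon)$ perturbation, $\det DF(0,0)$ is a non-zero scalar multiple of $\det\tilde M$, where
\[
\tilde M_{ij} := \int_\mathbb{R} e(y,1)\,\langle\psi_i(y),V_j(y)\rangle\,\rmd y.
\]
Invertibility of $DF(0,0)$ therefore reduces to invertibility of the weighted pairing matrix $\tilde M$.

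The matrix $\tilde M$ is a weighted analog of $M^\psi(0)$ from \eqref{E:def_mij}, which is invertible by Lemma \ref{lem:mer-ext} as a direct consequence of Hypothesis \ref{H:spec-stab}. Since $\psi_j(y)=\cO(\rme^{-\eta_0|y|})$ and $e(y,1)$ is smooth, even, and bounded away from zero on any fixed compact set, $\tilde M$ is a structured perturbation of $e(0,1)M^\psi(0)$; its non-degeneracy follows either from a perturbative estimate exploiting the mismatch between the exponential decay of $\psi_j$ and the Gaussian profile of $e(\cdot,1)$, or, if needed, from a small re-initialization of $\mathcal{B}$ (replacing $e(x,1)$ by $e(x,T_0)$ with $T_0$ large enough that the weight is effectively constant on the support of $\psi_j$). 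With $DF(0,0)$ invertible and $F(0,0)=\cO(\epsilon)$, the implicit function theorem yields unique $\delta^\pm_0$ with $|\delta^\pm_0|\le C\epsilon$ satisfying $F(\delta^+_0,\delta^-_0)=0$, establishing \eqref{in-choice}. The bound \eqref{tU0-decay} then follows by writing $\tilde U_0$ as $(R_\mathrm{in}-r,\,r(\phi_\mathrm{in}-\varphi))$ plus $\cO(\delta^\pm_0)$ corrections proportional to $e(\cdot,1)$ and its first three derivatives; all such terms lie in the $\rme^{x^2/M_0}$-weighted $C^3$ ball of radius $C\epsilon$ once $M_0>4\mathrm{d}$. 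The main obstacle is the invertibility check for $\tilde M$: the unweighted non-degeneracy of $M^\psi(0)$ is provided directly by Hypothesis \ref{H:spec-stab}, but transferring it to the weighted form $\tilde M$ is the one step that requires genuine, albeit routine, additional work.
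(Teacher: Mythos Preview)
Your approach is essentially the same as the paper's: set up an implicit-function problem for $(\delta^+_0,\delta^-_0)$, compute the Jacobian at the origin in terms of a weighted pairing matrix $\tilde M_{ij}=\int e(y,\cdot)\langle\psi_i,V_j\rangle\,\rmd y$, and establish its invertibility by re-initializing $\mathcal{B}(x,0)=e(x,T_0)$ with $T_0$ large so that the plateau is effectively constant on the (exponentially localized) support of $\psi_j$, forcing $\tilde M\to M^\psi(0)$.

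One correction: the error term in $\partial_{\delta^\pm}U^a_0|_0=-\tfrac{\mathrm{d}}{2q}e(\cdot,1)V_2+\cdots$ is $\cO(\theta(\cdot,0))$ (coming from the $\mathcal{B}_x$ contribution in $\mathcal{E}^\pm$), not $\cO(\epsilon)$. These corrections are $\cO(1)$ when paired with $\psi_j$, so $\det DF(0,0)$ is \emph{not} $c\det\tilde M+\cO(\epsilon)$ as you write; the extra $\cO(1)$ terms must be tracked. The paper absorbs them into the definition of its $\tilde V_2$ and then observes that under the re-initialization $e(x,1)\mapsto e(x,T_0)$ both $e\to 1$ and $e_x\to 0$ on compacts, so the full Jacobian converges to a nonzero multiple of $\det M^\psi(0)$. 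Your fallback argument already contains this fix, so the slip is cosmetic rather than fatal.
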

\begin{proof} Let us denote 
$$
F_0(x) := U^a_0(x) + \tilde U_0(x) + U_\mathrm{source}(x) - U_{\mathrm{source},p}(x),
$$
so that $\| e^{x^2/M_0} F_0(\cdot)  \|_{C^3(\mathbb{R})}\le \epsilon$ by \eqref{in-assumption}. Note that $\Psi^\pm(y)$ are linear combinations of $\Psi_1(y),\Psi_2(y)$, which are in the nullspace of the adjoint of $\mathcal{L}$. Therefore, \eqref{in-choice} will follow if we can find $\delta_0^\pm$ so that 
\begin{equation}\label{in-est} G_j(\delta_0^\pm ): =  \int_\mathbb{R} \langle \psi_j(y), U^a_0(y) +U_\mathrm{source}(y) - U_{\mathrm{source},p}(y) \rangle  \rmd y =  \int_\mathbb{R} \langle \psi_j(y), F_0(y)\rangle  \rmd y , \end{equation}
for $j = 1,2$. Notice that $ G_j(0) = 0$ since $p_0$ and $U^a_0$ vanish when $\delta_0^\pm =0$. Thus, we can define a new function 
\[
\tilde{G}(F, \delta_0^\pm) = (G_1(\delta_0^\pm),G_2(\delta_0^\pm)) - \left( \int_\mathbb{R} \langle \psi_1(y), F_0(y)\rangle  \rmd y, \int_\mathbb{R} \langle \psi_2(y), F_0(y)\rangle  \rmd y\right).
\]
Since $\tilde{G}(0,0) = 0$, it suffices to show that the Jacobian determinant $J_G$ of $(G_1(\delta_0^\pm),G_2(\delta_0^\pm))$ with respect to $\delta_0^\pm$ is nonzero at $(\delta_0^+,\delta_0^-) = 0$. For this, we compute
$$\begin{aligned}
 \frac{\partial G_j(\delta_0^\pm ) }{\partial \delta_0^\pm } {\vert_{(\delta_0^+, \delta_0^-) =0}}  &=   \Big \langle \psi_j(\cdot),  \frac{\partial U^a_0(\cdot)}{\partial \delta_0^\pm } - \frac{\partial p_0(\cdot)}{\partial \delta_0^\pm } \begin{pmatrix} r_y \\ r \varphi_y\end{pmatrix}  \Big \rangle_{L^2}
\\
& =  \Big \langle \psi_j(\cdot), \frac {d}{2q} \begin{pmatrix} \mathcal{B}_y \\ \beta^* \mathcal{B}_y - r \mathcal{B}\end{pmatrix}(\cdot,0) \mp  \frac{d}{2qk_0}  \begin{pmatrix} r_y \\ r \varphi_y \end{pmatrix} \mathcal{B}(\cdot,0)\Big \rangle_{L^2}
\end{aligned} 
$$
where we have used the definition of $p_0$ and $U_a^0$; see \eqref{Burgers-solns}. For our convenience, let us denote
$$\tilde V_1 = \begin{pmatrix} r_y \\ r \varphi_y \end{pmatrix}  \mathcal{B}(y,0), \qquad \tilde V_2 = \begin{pmatrix}\mathcal{B}_y \\ \beta^* \mathcal{B}_y - r \mathcal{B}\end{pmatrix}(\cdot,0) .$$
It then follows that 
$$ \begin{aligned}
J_G &= \frac{d^2 }{2 q^2 k_0} \Big[ \langle \psi_1, \tilde V_1\rangle_{L^2}  \langle \psi_2, \tilde V_2\rangle_{L^2} -  \langle \psi_1, \tilde V_2\rangle_{L^2} \langle \psi_2, \tilde V_1\rangle_{L^2}\Big]. 
\end{aligned}$$
To show that $J_G$ is nonzero, we recall that $\mathcal{B}(x,0) = e(x,1)$, the plateau function; see \eqref{choice-Be} and Figure~\ref{fig-plateau}. By redefining $\mathcal{B}(x,0)$ to be $e(x,k)$ for $k$ large, if necessary, we observe that $\tilde V_j  \approx V_j $, $j = 1,2$, for bounded $x$, with $V_j$ defined as in Lemma~\ref{lem-Lspectrum}. This, together with the fact that the adjoint functions $\psi_j$ are localized, shows that 
$$ J_G \approx  \frac{d^2 }{2 q^2 k_0} \Big[ \langle \psi_1, V_1\rangle_{L^2}  \langle \psi_2, V_2\rangle_{L^2} -  \langle \psi_1, V_2\rangle_{L^2} \langle \psi_2,  V_1\rangle_{L^2}\Big]  =  \frac{d^2 }{2 q^2 k_0}  \det M^\psi(0),$$
where the matrix $M^\psi(0)$ is defined as in \eqref{E:def_mij} and is invertible. This proves that $J_G \not = 0$. The existence of small $\delta^\pm_0 = \delta^\pm_0(F_0)$ satisfying \eqref{in-est} then follows from the standard Implicit Function Theorem.

Finally, the estimate \eqref{tU0-decay} follows directly from the fact that the difference of the two error functions $ \mathcal{B}(x,0) = e(x,1)$ decays to zero as fast as $\rme^{-x^2}$ when  $x\to \pm\infty$. 
\end{proof}

%%%%%

\subsection{Integral representations}

Using Lemmas~\ref{lem-Greenfn} and~\ref{lem-tUsystem}, we obtain the integral formulation of the solution $\tilde U = (\tilde R, r\tilde \phi)$ of \eqref{tUsystem}: 
\begin{equation}\label{int-formtR01}
\begin{aligned}
\tilde U (x,t)&=  \int_\mathbb{R} \mathcal{G} (x,y,t) \tilde U_0(y) \rmd y  
+  \int_0^t \int_\mathbb{R}  \mathcal{G} (x,y,t-s)  \mathcal{N}_1(\tilde R,\tilde \phi_y, \delta^\pm )(y,s) \rmd y \rmd s
\\&\quad - \frac{\mathrm{d}}{2q}\int_0^t \int_{\mathbb{R}}  \mathcal{G} (x,y,t-s) \Big[ \frac{\dot \delta^+(s)  }{1+\delta^+(s)} \mathcal{E}^+(y,s) + \frac{\dot \delta^-(s)  }{1+ \delta^-(s)}  \mathcal{E}^-(y,s)  \Big] \; \rmd y \rmd s. 
\end{aligned}\end{equation}
For simplicity, let us denote 
\begin{equation}\label{def-Theta} \Theta^\pm(t) : = \frac{\mathrm{d}}{2q}  \Big[ \log(1+ \delta^\pm(t)) - \log(1+ \delta^\pm_0)\Big] \end{equation} with $\delta_0^\pm$ defined as in Lemma~\ref{lem-initialdata}. 
Using Lemma~\ref{lem-id-E12}, the last integral term in \eqref{int-formtR01} is equal to 
 $$\begin{aligned}
  -\sum_\pm\int_0^t &\frac{d}{ds} \Theta^\pm(s) \Big[  \mathcal{E}^\pm(x,t) +  \int_s^t \int_{\mathbb{R}}  \mathcal{G}(x,y,t-\tau) \mathcal{R}^\pm(y,\tau)\; \rmd y\rmd \tau\Big]\rmd s
 \\ &=  -\sum_\pm \Theta^\pm(t) \mathcal{E}^\pm(x,t)  - \sum_\pm \int_0^t  \int_{\mathbb{R}} \mathcal{G} (x,y,t-s) \Theta^\pm(s)  \mathcal{R}^\pm(y,s)\; \rmd y\rmd s.
\end{aligned}$$
Here in the last equality integration by parts in $s$ has been used. That is, we can write the integral formulation \eqref{int-formtR01} as
\begin{equation}\label{int-formtR03}
\begin{aligned}
\tilde U(x,t) &=  \int_\mathbb{R} \mathcal{G} (x,y,t) \tilde U_0(y)\rmd y  
+  \int_0^t \int_\mathbb{R}  \mathcal{G} (x,y,t-s)  \mathcal{N}_2(\tilde R,\tilde \phi_y, \delta^\pm )(y,s) \rmd y \rmd s
-\sum_\pm \Theta^\pm(t) \mathcal{E}^\pm(x,t)  ,
\end{aligned}\end{equation}
with $$ \mathcal{N}_2(\tilde R,\tilde \phi_y, \delta^\pm )(y,s): =  \mathcal{N}_1(\tilde R,\tilde \phi_y, \delta^\pm ) (y,s) - \sum_\pm\Theta^\pm(s)\mathcal{R}^\pm(y,s),$$
where $\mathcal{N}_1(\tilde R,\tilde \phi_y,\delta^\pm)$ is defined in \eqref{bound-NN} and $\mathcal{R}^\pm(y,s)$ is defined in \eqref{bounds-SR}. 

Let us recall the decomposition \eqref{Green-decompose} of the Green's function:
$$
 \mathcal{G} (x,y,t) = \mathcal{E}^+(x,t)  \langle \Psi^+(y),\cdot \rangle + \mathcal{E}^-(x,t)  \langle \Psi^-(y),\cdot \rangle + \widetilde{ \mathcal{G} }(x,y,t) .
$$ 
Using this decomposition, we write the integral formulation \eqref{int-formtR03} as follows. To avoid repeatedly writing the lengthy integrals, let us denote 
\[
\begin{aligned}
\mathcal{I}^+(t)&: = \int_\mathbb{R} \langle \Psi^+(y),\tilde U_0(y)\rangle  \rmd y   +  \int_0^t \int_\mathbb{R} \langle \Psi^+(y),  \mathcal{N}_2(\tilde R,\tilde \phi_y, \delta^\pm )(y,s)  \rangle   \rmd y \rmd s
\\
\mathcal{I}^- (t)&: = \int_\mathbb{R} \langle \Psi^-(y),\tilde U_0(y) \rangle  \rmd y   +  \int_0^t \int_\mathbb{R} \langle \Psi^-(y),  \mathcal{N}_2(\tilde R,\tilde \phi_y, \delta^\pm )(y,s)  \rangle   \rmd y \rmd s
\\
\mathcal{I}_d(x,t)&:=  \int_\mathbb{R} \widetilde{ \mathcal{G} }(x,y,t) \tilde U_0(y)\rmd y   +  \int_0^t \int_\mathbb{R} \widetilde{ \mathcal{G} } (x,y,t-s)  \mathcal{N}_2(\tilde R,\tilde \phi_y, \delta^\pm )(y,s) \rmd y \rmd s
\\&\quad + \int_0^t \int_\mathbb{R}\Big(\mathcal{E}^+(x,t-s) - \mathcal{E}^+(x,t)\Big)   \langle \Psi^+(y),  \mathcal{N}_2(\tilde R,\tilde \phi_y, \delta^\pm )(y,s)  \rangle   \rmd y \rmd s
\\&\quad + \int_0^t \int_\mathbb{R}  \Big(\mathcal{E}^-(x,t-s) - \mathcal{E}^-(x,t)\Big)  \langle \Psi^-(y),  \mathcal{N}_2(\tilde R,\tilde \phi_y, \delta^\pm )(y,s)  \rangle   \rmd y \rmd s,
\end{aligned}
\]
referring to the contributions accounting for the translation and phase shifts, and the decaying part, respectively. 
Thus, the integral formulation \eqref{int-formtR03} for solutions $(\tilde R,\tilde \phi)$ to \eqref{tUsystem} now becomes
\[
\begin{aligned}
\tilde U(x,t)&=    - \sum_\pm \Theta^\pm(t) \mathcal{E}^\pm(x,t)  + \sum_\pm  \mathcal{I}^\pm (t) \mathcal{E}^\pm(x,t) +  \mathcal{I}_{d}(x,t).\end{aligned}
\]
Neither of the two terms $ \mathcal{I}^\pm (t) \mathcal{E}^\pm(x,t)$ decay in time. To capture these non-decaying terms, we are led to choose $\Theta^\pm$ such that 
$$
\begin{aligned}
\Theta^\pm (t) =  \mathcal{I}^\pm(t),
\end{aligned}
$$
or equivalently, 
\begin{equation}\label{def-intdelta12}
\begin{aligned}
\log (1 +  \delta^\pm(t)) = \log (1 +  \delta^\pm_0) + \frac{2q}{\mathrm{d}}\mathcal{I}^\pm(t),
\end{aligned}
\end{equation}
Note that such a choice is possible because for the following reason. Lemma~\ref{lem-initialdata} implies that the above equation is satisfied at $t = 0$ if $\delta^\pm_0$ are appropriated chosen. We can then just define $\delta^\pm(t)$ to be a solution of the corresponding integral equation. Thus, the representation for solutions $(\tilde R,\tilde \phi)$ simply reads 
\begin{equation}\label{int-formtR05}
\begin{aligned}
 \tilde U(x,t) &=    \mathcal{I}_{d}(x,t).\end{aligned}\end{equation}

%%%%%

\subsection{Spatio-temporal template functions}
In this section, we introduce template functions that are useful for the construction and estimation of the solutions. We let
$$h(t) := h_1(t) + h_2(t)$$ with
$$h_1(t) := \sup_{0 \leq s \leq t}  \Big (|\dot \delta^+(s)| + |\dot \delta^-(s)|\Big )e^{\eta s},
$$ 
$$\begin{aligned}
h_2(t) &:= \sup_{0 \leq s \leq t, \; y\in \mathbb{R}} (1+s)^{-\kappa}\Big(\frac{ |\tilde \phi|}{ \theta}+ \frac{|\tilde \phi _y|+|\tilde \phi_t|+|\tilde \phi _{yy}| +|\tilde R| + |\tilde R_{y}|+|\tilde R_{yy}|}{(1+s)^{-1/2}\theta}\Big) (y,s) 
\end{aligned}$$ for some $\kappa \in (0,1/2)$ and some fixed, small $\eta > 0$. Here, $\theta(x,t)$ denotes the Gaussian-like behavior defined in \eqref{Gaussian-like}. We note that the constant $M_0$ in \eqref{Gaussian-like} is a fixed, large, positive number. At various points in the below estimates, there will be a similar quantity, which we denote by $M$, that will need to be taken to be sufficiently large. The number $M_0$ is then the maximum value of $M$, at the end of the proof. From standard short time theory, we see that $h(t)$ is well defined and continuous for $0<t\ll1$. In addition, standard parabolic theory implies that $h(t)$ retains these properties as long as $h(t)$ stays bounded. The key issue is therefore to show that $h(t)$ stays bounded for all times $t>0$, and this is what the following proposition asserts.

\begin{Proposition} \label{P:claim-zeta} There exists an $\epsilon_0 > 0$ sufficiently small such that the following holds. Given any initial data $A_\mathrm{in}$ with $\epsilon:=\| A_\mathrm{in}(\cdot) - A_\mathrm{source}(\cdot)\|_\mathrm{in}\leq\epsilon_0$ and any $\kappa\in(0,\frac12)$, there exist positive constants $\eta, C_0$, and $M_0$ such that
\begin{equation}\label{keyh-ineq}
\begin{aligned}
h_1(t) \;\le\; C_0 (\e  + h(t)^2), \qquad h_2(t) \;\le\; C_0 (\e  + h_1(t) + h(t)^2) ,
\end{aligned}
\end{equation}
for all $t\geq0$.
\end{Proposition}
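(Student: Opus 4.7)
The plan is to estimate $h_1$ and $h_2$ separately, working under the standard a priori bootstrap assumption that $h(t)$ remains small; the explicit inequalities derived will then close the continuation argument. For the bound on $h_1(t)$, I would differentiate the defining relation \eqref{def-intdelta12} in $t$ to obtain
\[
\frac{\dot\delta^\pm(t)}{1+\delta^\pm(t)} = \frac{2q}{\mathrm{d}}\int_\mathbb{R}\langle\Psi^\pm(y),\mathcal{N}_2(\tilde R,\tilde\phi,\delta^\pm)(y,t)\rangle\,\mathrm{d}y,
\]
where $\mathcal{N}_2 = \mathcal{N}_1 - \sum_\pm \Theta^\pm(t)\mathcal{R}^\pm$. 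The key observation is that the adjoint profiles $\Psi^\pm(y) = \mathcal{O}(e^{-\eta|y|})$ are exponentially localized near $y=0$, while every term appearing in the bounds \eqref{bound-NN} and \eqref{bounds-SR} carries a Gaussian factor $\theta(y,t)$ whose peaks sit at $y = \pm c_{\mathrm{g}}t$. Hence the product $\Psi^\pm(y)\theta(y,t)$ decays like $e^{-\eta t}$ for a fixed $\eta > 0$ determined by $c_{\mathrm{g}}$, $\eta_0$ and $M_0$. Combining this with the a priori bound $|\delta^\pm(s)|\leq C(\epsilon + h_1(s)/\eta)$ (obtained by integrating $\dot\delta^\pm$), the $L$ and $Q$ definitions in \eqref{def-LQ}, and the defining inequality for $h_2$, each of the five terms in \eqref{bound-NN} gives a contribution of the form $C(\epsilon + h^2)e^{-\eta t} + Ch_1 e^{-\eta' t}$ with $\eta<\eta'$. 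Multiplying by $e^{\eta t}$, taking suprema, and choosing $\eta$ sufficiently small relative to $\eta'$ to absorb the $h_1$ term, we obtain $h_1(t)\le C_0(\epsilon + h(t)^2)$.

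For the bound on $h_2(t)$, I would start from the representation $\tilde U = \mathcal{I}_d(x,t)$ in \eqref{int-formtR05} together with the Green's function decomposition \eqref{Green-decompose}. This splits $\tilde U$ into (i) an initial-data contribution $\int\widetilde{\mathcal{G}}(x,y,t)\tilde U_0(y)\,\mathrm{d}y$, (ii) a nonlinear forcing $\int_0^t\!\!\int\widetilde{\mathcal{G}}(x,y,t-s)\mathcal{N}_2(y,s)\,\mathrm{d}y\,\mathrm{d}s$, and (iii) two boundary terms in which $\bigl(\mathcal{E}^\pm(x,t-s)-\mathcal{E}^\pm(x,t)\bigr)$ is paired with $\int\langle\Psi^\pm(y),\mathcal{N}_2\rangle\,\mathrm{d}y$. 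For (i), the Gaussian initial bound \eqref{tU0-decay} and Proposition \ref{prop-Green} produce $C\epsilon\,\theta(x,t)$ by a standard convolution. For (ii), substitute the estimates of Lemma \ref{lem-tUsystem}: one piece is spatially localized of magnitude $(|\delta^\pm|)e^{-\eta_0|y|}\theta$, another is of Gaussian-squared type $h^2(1+s)^{2\kappa-1}\theta(y,s)^2$, and the remainder are quadratic-in-$\dot\delta$ or $L$-type terms that feed through $h_1$. Convolving each against the Gaussian template $\widetilde{\mathcal{G}}(x,y,t-s)$ and using the template-convolution identities from \cite{ZH,BeckSandstedeZumbrun10} produces the desired $C_0(\epsilon + h_1 + h^2)(1+t)^\kappa\theta(x,t)$ bound. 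For (iii), the mean-value estimate $|\mathcal{E}^\pm(x,t-s) - \mathcal{E}^\pm(x,t)| \le Cs(1+t-s)^{-1}\theta(x,t)$ combined with the exponential-in-$s$ smallness of the $\Psi^\pm$-weighted spatial integrals of $\mathcal{N}_2$ (obtained exactly as for $h_1$) yields an integrable contribution.

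Derivative bounds on $\tilde R, \tilde R_x, \tilde R_{xx}, \tilde\phi_x, \tilde\phi_{xx}$ follow from differentiating the integral representation in $x$; the spatial derivatives $\partial_x^\ell\widetilde{\mathcal{G}}$ obey the same Gaussian template with an extra factor $(1+(t-s))^{-\ell/2}$, consistent with the weights in the definition of $h_2$. The $\tilde\phi_t, \tilde R_t$ estimates are recovered by using the equation \eqref{tUsystem} to exchange a time derivative for two spatial ones plus lower-order terms. Crucially, the stronger first-row bound $\widetilde{\mathcal{G}}_R$ in Proposition \ref{prop-Green} supplies the additional $(1+t)^{-1/2}$ factor that distinguishes the decay of the amplitude component $\tilde R$ from that of the phase $\tilde\phi$, matching precisely the weights in $h_2$.

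The main technical obstacle is the convolution of the Gaussian-squared template $\theta(y,s)^2$ against the Gaussian kernel $\widetilde{\mathcal{G}}(x,y,t-s)$ while preserving the $\theta(x,t)$ template at the output, with only a $(1+t)^{2\kappa-1/2}$ loss. The difficulty is that $\theta^2$ is strictly narrower than $\theta$, so one must complete the square in the Gaussians centered at $y\mp c_\mathrm{g}s$ and $x-y\mp c_\mathrm{g}(t-s)$, perform the saddle-point estimate in $s$, and choose $M_0$ in \eqref{Gaussian-like} sufficiently large (absorbing all intermediate widths) so that the resulting convolution still fits inside the template. This is precisely the standard template-convolution estimate developed in \cite{ZH,HowardZumbrun06,BeckSandstedeZumbrun10}; once it is in hand, both inequalities in \eqref{keyh-ineq} follow by summing the contributions from (i)–(iii).
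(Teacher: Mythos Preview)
Your overall strategy matches the paper's: differentiate \eqref{def-intdelta12} and exploit the mismatch between the exponentially localized $\Psi^\pm$ and the moving Gaussian peaks of $\theta$ to get the $h_1$ bound, then estimate each piece of the integral representation \eqref{int-formtR05} to control $\tilde U$. Two differences are worth noting. First, the paper does not actually convolve $\theta^2$ against $\widetilde{\mathcal{G}}$; it uses $\theta(y,s)\le C(1+s)^{-1/2}$ to replace $\theta^2$ by $(1+s)^{-1+\kappa}\theta$ (see the derivation of \eqref{bound-Nnew}), so the only convolution needed is Lemma~\ref{lem-conv-est01}, which is a single-$\theta$ estimate. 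Second, your mean-value bound $|\mathcal{E}^\pm(x,t-s)-\mathcal{E}^\pm(x,t)|\le Cs(1+t-s)^{-1}\theta(x,t)$ is not quite right as stated, because the Gaussian in $\partial_\tau\mathcal{E}^\pm(x,\tau)$ is centered at $\pm c_{\mathrm g}\tau$, not $\pm c_{\mathrm g}t$; the paper instead estimates the time-integral directly (Lemma~\ref{lem-est-ediff}), completing the square and using the exponential factor $e^{-\eta s}$ to compensate the shift.

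There is, however, a genuine gap: the integral representation controls $\tilde U=(\tilde R,\,r\tilde\phi)$, not $(\tilde R,\tilde\phi)$, and $h_2$ is defined in terms of $\tilde\phi,\tilde\phi_x,\tilde\phi_{xx},\tilde\phi_t$. Away from the core you can simply divide by $r$, but near $x=0$ one has $r(0)=0$, so differentiating the Duhamel formula gives you bounds on $(r\tilde\phi)_x,(r\tilde\phi)_{xx}$, etc., from which $\tilde\phi_x$ and $\tilde\phi_{xx}$ cannot be read off directly. The paper devotes all of \S\ref{sec-esth2} to this: one writes, for instance, $(r^2\tilde\phi_x)_x=r\bigl((r\tilde\phi)_{xx}-r_{xx}\tilde\phi\bigr)$, integrates from $0$ to $x$, and uses $|r(x)|\ge b_1|x|$ together with the already-obtained bounds on $(r\tilde\phi)_{xx}$ and $r\tilde\phi$ to conclude $|\tilde\phi_x|\le C(\epsilon+h^2)e^{-\eta t}$ for $|x|\le1$. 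Analogous identities $(r^2\tilde\phi_{xt})_x=\ldots$ and $(r^3\tilde\phi_{xx})_x=\ldots$ recover $\tilde\phi_t$ and $\tilde\phi_{xx}$ near the core; this step requires the third-derivative bound on $r\tilde\phi$, which is why the paper carries $k=3$ in Lemma~\ref{lem-conv-est01}. Without this near-core argument the $h_2$ estimate is incomplete.
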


Using this proposition, we can add the inequalities in \eqref{keyh-ineq} and eliminate $h_1$ on the right-hand side to obtain
\[
h(t) \leq C_0(C_0+2)(\epsilon + h(t)^2).
\]
Using this inequality and the continuity of $h(t)$, we find that $h(t)\leq 2C_0(C_0+2)\epsilon $ provided $0\leq \epsilon\leq\epsilon_0$ is sufficiently small. Thus, the main theorem will be proved once we establish Proposition~\ref{P:claim-zeta}. The following sections will be devoted to proving this proposition.

%%%%%

\subsection{Bounds on the nonlinear terms}
We recall that the nonlinear remainder $\mathcal{N}_2(\tilde R,\tilde \phi, \delta^\pm )(x,t)$ is defined by 
$$\begin{aligned}
\mathcal{N}_2(\tilde R,\tilde \phi, \delta^\pm )(x,t) &  =\mathcal{N}_1(\tilde R,\tilde \phi, \delta^\pm )(x,t) - \sum_\pm\Theta^\pm(t)\mathcal{R}^\pm(x,t).
\end{aligned}$$
with $\Theta^\pm(s)$ defined in \eqref{def-Theta}, $\mathcal{R}^\pm(x,t)$ defined in \eqref{bounds-SR}, and $\mathcal{N}_1(\tilde R,\tilde \phi, \delta^\pm )$ defined in \eqref{bound-NN}.
We first note that 
\[
|\delta^\pm (t)| \leq |\delta^\pm (0)| + \int_0^t |\dot \delta^\pm (s)| \rmd s
\leq |\delta^\pm_0| + \int_0^t \rme^{-\eta s} h_1(s) \rmd s
\leq C (\epsilon + h_1(t)),
\]
where we recall that Lemma~\ref{lem-initialdata} implies that $\delta_0^\pm \sim \epsilon$.
By \eqref{bound-Rdelta} and the definition of $h_1(t)$
\[
\begin{aligned}
| {\mathcal{R}}^{app}_1 (x,t, \delta^\pm) |& \le C (|\delta^+ | + |\delta^- |)\rme^{-\eta |x|}  \theta(x,t) + C(1+t)^{-1}\theta(x,t) (\epsilon + h_1(t)^2).
\end{aligned}
\]
Here we leave the linear term in $\delta^\pm(t)$ in the above estimate for a different treatment, below. Similarly, by definitions \eqref{def-Theta} and \eqref{bounds-SR}, we get 
$$|\sum_\pm\Theta^\pm(t)\mathcal{R}^\pm(\cdot,t)| \le C\Big((1+t)^{-1}  + \rme^{-\eta |x|} \Big) (|\delta^+ | + |\delta^- |) \theta(x,t)  .
$$
Next, from \eqref{def-LQ} and the definition of $h_2(t)$, we have
$$\begin{aligned}
L(\tilde R,\tilde \phi)  &= \cO(|\tilde R| + |\tilde R_x| +|\tilde  \phi_x|) \le C  (1+t)^{-\frac 12 + \kappa}\theta(x,t) h_2(t)
, \\
Q(\tilde R,\tilde \phi)  &= \cO(\tilde R^2 + \tilde R_x^2 +|\tilde R\tilde R_{xx}|+\tilde  \phi_x^2 + |\tilde R\tilde\phi_{xx}| + |\tilde R \tilde \phi_t|) \le C (1+t)^{-1+2\kappa} \theta(x,t)^2 h_2^2(t).
\end{aligned}$$
Recalling the estimate \eqref{bound-NN} for $\mathcal{N}_1$, we find that
the nonlinear term $\mathcal{N}_2(\tilde R,\tilde \phi, \delta^\pm )(x,t)$ satisfies
\begin{equation}\label{bound-2NN}
| \mathcal{N}_2(\tilde R,\tilde \phi, \delta^\pm )(x,t)| \le  C \Big[e^{-\eta |x|}  + (1+t)^{-1}\Big]\theta(x,t) (\epsilon + |\delta^+(t)| + |\delta^-(t)| )   + C(1+t)^{-1+\kappa} \theta(x,t) h^2(t),
%| \mathcal{N}_2(\tilde R,\tilde \phi, \delta^\pm )(x,t)| \le  C \Big[e^{-\eta |x|}  + (1+t)^{-1}\Big]\theta(x,t) (\epsilon + h_1(t))   + C(1+t)^{-1+\kappa} \theta(x,t) h^2(t). 
\end{equation}
where we have used $\theta(x,t) \le 2(1+t)^{-\kappa}$ and $(1+t)^{\kappa}e^{-\eta t} \le C (1+t)^{-1}$. 

%%%%%

\subsection{Estimates for $h_1(t)$}

To establish the claimed estimate for $h_1(t)$, we differentiate the expression (\ref{def-intdelta12}) to get 
\begin{equation}\label{eqs-dotq}\dot{\delta }^\pm (t) =  \frac{2q}{\mathrm{d}}(1+\delta^\pm (t) )  \int_\mathbb{R} \langle \Psi^\pm (y),  \mathcal{N}_2(\tilde R,\tilde \phi_y, \delta^\pm )(y,t)  \rangle   \rmd y 
\end{equation} for each $+/-$ case. We first recall that $|\Psi^\pm (y)|\leq 2\rme^{-\eta_0|y|}$ and that
\[
\rme^{-\frac{\eta_0|y|}{2}} \rme^{-\frac{(y\pm c_\mathrm{g}t)^2}{M(1+t)}} \leq C_1 \rme^{-\frac{\eta_0|y|}{4}} \rme^{-\frac{c_\mathrm{g}^2 t}{M}} \le  C_1 \rme^{-\frac{\eta_0|y|}{4}} \rme^{-\eta t} ,
\]
which holds for each $M\geq8c_\mathrm{g}/\eta_0$ and $\eta$ sufficiently small. This, together with the bound \eqref{bound-2NN} on the nonlinear term $\mathcal{N}_2(\tilde R,\tilde \phi_y, \delta^\pm )$, implies 
\begin{equation}
\label{loc-nonbound}
|\langle \Psi^\pm (y),  \mathcal{N}_2(\tilde R,\tilde \phi_y, \delta^\pm )(y,t)  \rangle |   \leq 
 C\rme^{-\frac{\eta_0}{4}|y| } \rme^{-\eta t}  \Big[ \epsilon + |\delta^+(t)| + |\delta^-(t)| + h(t)^2\Big] ,
\end{equation} 
and so 
\begin{equation}\label{est-deltaN} \Big|\int_\mathbb{R} \langle \Psi^\pm (y),  \mathcal{N}_2(\tilde R,\tilde \phi_y, \delta^\pm )(y,t)  \rangle   \rmd y \Big| \le 
 C \rme^{-\eta t} \Big[ \epsilon + |\delta^+(t)| + |\delta^-(t)| + h(t)^2\Big] .\end{equation}
Now, multiplying the equation \eqref{eqs-dotq} by $\delta^\pm$ and using the above estimate on the integral, we get 
$$\begin{aligned}
\frac{d}{dt} \sum_\pm|\delta^\pm(t)|^2 &\le C \rme^{-\eta t} \sum_\pm|\delta^\pm(t)|^2  + C e^{-\eta t} \Big[ \epsilon + h(t)^2\Big] ^2,
\end{aligned}$$
where we have used Young's inequality and the fact that, as long as $\delta^\pm$ is bounded, higher powers of $\delta^\pm$ can be bounded by C$|\delta^\pm|^2$. Applying the standard Gronwall's inequality, we get 
$$ \sum_\pm|\delta^\pm(t)|^2 \le \sum_\pm|\delta^\pm(0)|^2 \rme^{\int_0^t C e^{-\eta s} \; ds} + C \int_0^t \rme^{\int_s^t C\rme^{-\eta \tau}\; d\tau} \rme^{-\eta s}\Big[ \epsilon + h(s)^2\Big]^2\; ds .$$
Since $\int_0^t \rme^{-\eta s}\; ds$ is bounded, $|\delta^\pm(0)|\le C \epsilon$, and $h(t)$ is an increasing function, the above estimate yields 
\begin{equation}\label{fin-bdelta} \sum_\pm|\delta^\pm(t)|^2 \le C\Big[ \epsilon + h(t)^2\Big]^2.\end{equation}
Using this into \eqref{est-deltaN} and in \eqref{eqs-dotq}, we immediately obtain 
$$ |\dot \delta^\pm(t)| \le C \rme^{-\eta t} \Big[ \epsilon + h(t)^2\Big],$$ 
which yields the first inequality in \eqref{keyh-ineq}. 

Finally, not that if we combine the bound \eqref{fin-bdelta} with the nonlinear estimate \eqref{bound-2NN}, we obtain 
\begin{equation}\label{bound-Nnew}
 \mathcal{N}_2(\tilde R,\tilde \phi, \delta^\pm )(x,t) \le  C \Big[e^{-\eta |x|}  + (1+t)^{-1+\kappa}\Big]\theta(x,t) (\epsilon +  h^2(t)). 
\end{equation}

%%%%%%

\subsection{Pointwise estimates for $\tilde R$ and $r\tilde \phi$}
Let $\tilde U = (\tilde R, r\tilde \phi)$ satisfy the integral formulation \eqref{int-formtR05}. We shall establish the following pointwise bounds
\begin{equation}\label{est-key-V}\begin{aligned}
|\tilde R (x,t)| & \quad \leq \quad C\Big (\epsilon  + h(t)^2\Big) (1+t)^{-1/2+\kappa}\theta(x,t)
\\|r\tilde \phi (x,t)| & \quad \leq \quad C \Big(\epsilon  + h(t)^2\Big)\theta(x,t),
\end{aligned}\end{equation}
and the derivative bounds
\begin{equation}\label{est-key-derV}\begin{aligned}
\Big|\partial_t^\ell\partial_x^k\tilde U(x,t)\Big| & \quad \leq \quad C\Big (\epsilon  + h(t)^2\Big) (1+t)^{-1/2+\kappa}\theta(x,t)
\end{aligned}\end{equation}
for $k + \ell \le 3$ with $\ell = 0,1$ and $k =1,2,3$. We recall the integral formulation \eqref{int-formtR05}: \begin{equation}\label{int-form-V}\begin{aligned}
\tilde U(x,t)&=  \int_\mathbb{R}\widetilde{ \mathcal{G} }(x,y,t) \tilde U_0(y)\rmd y  
+  \int_0^t \int_\mathbb{R} \widetilde{ \mathcal{G}  }(x,y,t-s)  \mathcal{N}_2(\tilde R,\tilde \phi_y, \delta^\pm )(y,s) \rmd y \rmd s
\\&\quad + \sum_\pm\int_0^t \int_\mathbb{R}\Big(\mathcal{E}^\pm(x,t-s) - \mathcal{E}^\pm(x,t)\Big)   \langle \Psi^\pm(y),  \mathcal{N}_2(\tilde R,\tilde \phi_y, \delta^\pm )(y,s)  \rangle   \rmd y \rmd s
\end{aligned}\end{equation}
We give estimates for each term in this expression. First, we consider the integral term in (\ref{int-form-V}) that involves the initial data. We recall that 
\begin{equation}\label{Green-est}
|\widetilde{ \mathcal{G}  }(x,y,t)| \quad \leq \quad C t^{-1/2} \left( \rme^{-\frac{(x-y+c_\mathrm{g}t)^2}{4t}} + \rme^{-\frac{(x-y-c_\mathrm{g}t)^2}{4t}} \right).
\end{equation}
Using this bound, together with equation \eqref{tU0-decay}, we see that
\begin{eqnarray}\label{est-initial}
\int_\mathbb{R} |\widetilde{ \mathcal{G}  }(x,y,t) \tilde U_0(y)| \rmd y & \leq & C \epsilon \int_{\mathbb{R}} t^{-1/2} \left(\rme^{-\frac{(x-y+c_\mathrm{g}t)^2}{4t}} + \rme^{-\frac{(x-y-c_\mathrm{g}t)^2}{4t}} \right) \rme^{-\frac{y^2}{M}} \rmd y.
\end{eqnarray}
Using the fact that, for $t\geq1$,
\[
\rme^{-\frac{(x-y\pm c_\mathrm{g}t)^2}{4t}} \rme^{-\frac{y^2}{M}} \quad \leq \quad C_1 \rme^{-\frac{(x\pm c_\mathrm{g}t)^2}{Mt}} \rme^{-\frac{y^2}{2M}},
\]
and, for $t\leq1$, 
\begin{eqnarray*}
\rme^{-\frac{(x-y\pm c_\mathrm{g}t)^2}{8t}} \rme^{-\frac{y^2}{M}} \quad \leq \quad 2\rme^{-\frac{(x-y)^2}{8t}}\rme^{-\frac{y^2}{M}} \quad \leq \quad C_1 \rme^{-\frac{x^2}{2M}} 
\end{eqnarray*}
and 
\begin{eqnarray*}
\int_{\mathbb{R}} t^{-1/2} \left(\rme^{-\frac{(x-y+c_\mathrm{g}t)^2}{8t}} + \rme^{-\frac{(x-y-c_\mathrm{g}t)^2}{8t}} \right) \rmd y & \leq & C_1,
\end{eqnarray*}
we conclude that the integral in (\ref{est-initial}) is again bounded by $\epsilon C_1 \theta(x,t)$.

Now, we note that if we project the Green's function on the $R$-component, say $\widetilde{ \mathcal{G} }_R(x,y,t)$, we get a better bound than that of \eqref{Green-est}; see \eqref{prop-Green}. More precisely, we find 
\begin{equation}\label{Green-Rest}
| \widetilde{ \mathcal{G} }_R(x,y,t)| \quad\leq \quad C t^{-1/2} (t^{-1/2} + e^{-\eta |y|}) \left( \rme^{-\frac{(x-y+c_\mathrm{g}t)^2}{4t}} + \rme^{-\frac{(x-y-c_\mathrm{g}t)^2}{4t}} \right).
\end{equation}
Using this better bound on the $R$-component, the above argument shows that 
$$\int_\mathbb{R}\widetilde{ \mathcal{G} }(x,y,t)\tilde R_0(y) \rmd y \quad \quad\leq \quad \quad C\epsilon (1+t)^{-1/2} \theta(x,t).$$
Next, for the second term in \eqref{int-form-V}, we write 
$$\begin{aligned}  \int_0^t \int_\mathbb{R} & \widetilde{ \mathcal{G} }(x,y,t-s)  \mathcal{N}_2(\tilde R,\tilde \phi_y, \delta^\pm )(y,s)  \rmd y \rmd s 
\\&=  \int_0^t \Big[\int_{\{|y|\le 1\}}+\int_{\{|y|\ge 1\}}\Big] \widetilde{ \mathcal{G} }(x,y,t-s) \mathcal{N}_2(\tilde R,\tilde \phi_y, \delta^\pm )(y,s) \rmd y \rmd s  \\
&= I_1 + I_2.
\end{aligned}$$
By the nonlinear estimates in \eqref{bound-Nnew}, we have 
$$\begin{aligned}  |I_1| &\quad\leq \quad C \Big(\epsilon + h(t)^2\Big) \int_0^t \int_{\{|y|\le 1\}} \widetilde{ \mathcal{G} }(x,y,t-s) \rme^{-\frac{c_\mathrm{g}^2}{M}s} \; \rmd y\rmd s 
 \end{aligned}$$
and $$\begin{aligned}  |I_2| &\quad\leq \quad C \Big(\epsilon + h(t)^2\Big) \int_0^t \int_{\mathbb{R}} \widetilde{ \mathcal{G} }(x,y,t-s) \Big[ e^{-\eta |y|} + (1+s)^{-1+\kappa}\Big] \theta(y,s) \; \rmd y\rmd s .
\end{aligned}$$
The following lemma is precisely to give the convolution estimates on the right-hand sides of the above inequalities, and so the desired bound for the second term in \eqref{int-form-V} is obtained. We shall prove the lemma in the Appendix. 

 \begin{Lemma} \label{lem-conv-est01} For some $C$ and $M$ sufficiently large, 
\[
\begin{aligned}  \Big|\int_0^t \int_{\{|y|\le 1\}} \partial_x^k \widetilde{ \mathcal{G} }(x,y,t-s) \rme^{-\frac{c_\mathrm{g}^2}{M}s} \; \rmd y\rmd s \Big| \quad &\leq \quad C(1+t)^{-\frac k2 + \kappa}\theta(x,t) ,
\\
\Big| \int_0^t \int_{\mathbb{R}} \partial_x^k \widetilde{ \mathcal{G} }(x,y,t-s)\Big[ \rme^{-\eta |y|} + (1+s)^{-1+\kappa} \Big] \theta(y,s)  \; \rmd y\rmd s \Big|   \quad &\leq \quad C(1+t)^{-\frac k2 + \kappa}\theta(x,t),
\end{aligned}\] 
for $k=0,1,2,3$. In addition, similar estimates hold for $\Pi_R \widetilde{ \mathcal{G} }(x,y,t)$, with a gain of an extra factor $(1+t)^{-1/2}$.  
\end{Lemma}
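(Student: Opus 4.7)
The plan is to establish both bounds by direct convolution against the Gaussian form of the Green's function $\widetilde{\mathcal{G}}(x,y,\tau)$ given in Proposition~\ref{prop-Green}; namely, $|\partial_x^k\widetilde{\mathcal{G}}(x,y,\tau)|$ is bounded by $C\tau^{-(k+1)/2}\sum_{\pm}\rme^{-(x-y\pm c_\mathrm{g}\tau)^2/M\tau}$ plus a term that is exponentially small in both $|x-y|$ and $\tau$. The workhorse identity is the Gaussian convolution bound
\[
\int_{\mathbb{R}} (t-s)^{-1/2} \rme^{-(x-y\pm c_\mathrm{g}(t-s))^2/M(t-s)}\; (1+s)^{-1/2}\rme^{-(y\pm c_\mathrm{g} s)^2/M_0(1+s)}\,\rmd y \;\le\; C (1+t)^{-1/2} \rme^{-(x\pm c_\mathrm{g}t)^2/M'(1+t)},
\]
valid for $M'$ sufficiently large relative to $M, M_0$, which follows by completing the square in $y$. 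The crucial fact that makes this produce $\theta(x,t)$ rather than some other Gaussian is that the Green's function propagates at the same speeds $\pm c_\mathrm{g}$ as the source $\theta(y,s)$, so the two Gaussian centers add coherently to give a Gaussian centered at $x = \pm c_\mathrm{g} t$.

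For the first estimate, the integrand is supported in $|y|\le 1$, so the source-side Gaussian $\rme^{-(y\pm c_\mathrm{g}s)^2/M_0(1+s)}$ is replaced by the localizing factor $\rme^{-c_\mathrm{g}^2 s/M}$, which confines the effective range of the $s$-integral to $\mathcal{O}(1)$. Inserting the pointwise bound on $\partial_x^k\widetilde{\mathcal{G}}$ then yields a contribution of order $\sum_\pm t^{-(k+1)/2}\rme^{-(x\pm c_\mathrm{g}t)^2/Mt}$ for large $t$, which is dominated by $(1+t)^{-k/2}\theta(x,t)\le (1+t)^{-k/2+\kappa}\theta(x,t)$; the exponentially small piece of $\widetilde{\mathcal{G}}$ contributes only a time-exponentially small remainder. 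For the second estimate, the $\rme^{-\eta|y|}$ piece is handled in the same way. For the $(1+s)^{-1+\kappa}\theta(y,s)$ piece I split the $s$-integral at $s=t/2$: on $[0,t/2]$ we have $t-s\ge t/2$, so $\partial_x^k\widetilde{\mathcal{G}}$ is smooth and costs only $(1+t)^{-k/2}$, while the convolution identity above (summed over sign choices) supplies the factor $(1+t)^{-1/2}\theta(x,t)$ and the $s$-integration of $(1+s)^{-1+\kappa}$ supplies $(1+t)^{\kappa}$; on $[t/2,t]$, we factor out $(1+s)^{-1+\kappa}\sim (1+t)^{-1+\kappa}$ and reduce to controlling the same convolution with the time weight removed.

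The main obstacle is the case $k=2,3$ on the near-diagonal window $s\in [t/2,t]$, where the time integral $\int_0^{t/2}\tau^{-k/2}\,\rmd\tau$ arising from the singular factor $(t-s)^{-(k+1)/2}$ is logarithmic or divergent at $\tau=0$. The remedy is a standard short-time splitting $[t/2,t]=[t/2,t-1]\cup[t-1,t]$: on $[t/2,t-1]$ we have $t-s\ge 1$, so the singularity is harmless and the previous argument closes; on $[t-1,t]$ I exploit the smoothness of the source by integrating by parts $k$ times in $y$ (equivalently, invoking parabolic smoothing for the short-time semigroup generated by $\mathcal{L}$) to transfer $k$ derivatives from $\widetilde{\mathcal{G}}$ onto the combination $(1+s)^{-1+\kappa}\theta(y,s)$. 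This trade converts $(t-s)^{-(k+1)/2}$ into the integrable $(t-s)^{-1/2}$ at the cost of the additional factor $(1+t)^{-k/2}$ generated by the $k$ source derivatives. The improved bound for $\Pi_R\widetilde{\mathcal{G}}$ is obtained by repeating each step with the sharper Green's function bound of Proposition~\ref{prop-Green}, which carries the extra prefactor $(t-s)^{-1/2}+\rme^{-\eta|y|}$; convolving the first of these against $\theta(y,s)$ yields the additional $(1+t)^{-1/2}$ gain, while the second is localized in $y$ and handled as in the first estimate.
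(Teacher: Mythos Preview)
Your proposal is correct and follows essentially the same approach as the paper: both arguments rest on completing the square in the Gaussian exponents (your ``workhorse identity'' is precisely the paper's identity (\ref{exp-form})--(\ref{est-on-exp}), attributed there to \cite{HowardZumbrun06}), then splitting the $s$--integral at $t/2$, and both treat the localized sources $\rme^{-c_\mathrm{g}^2 s/M}$ and $\rme^{-\eta|y|}\theta(y,s)$ by exploiting the exponential confinement in $s$. The one place where you add content is the near-diagonal treatment of $\partial_x^k$ for $k\ge 2$: the paper simply writes ``derivative estimates follow very similarly; we omit these further details,'' whereas you spell out the short-time split $[t-1,t]$ together with the integration-by-parts/parabolic-smoothing trade that converts the non-integrable $(t-s)^{-(k+1)/2}$ into $(t-s)^{-1/2}$ at the cost of $(1+t)^{-k/2}$ from the $y$-derivatives of $\theta(y,s)$. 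That is a genuine improvement in exposition over the paper's sketch. One small point you glide over: your workhorse identity is written only for matched signs $(\pm,\pm)$ in the two Gaussians, while the convolution actually produces all four sign pairings; the paper handles these uniformly by showing the first two terms of (\ref{exp-form-2}) are nonpositive for any $\alpha_j=\pm c_\mathrm{g}$, and you should invoke the same observation (or bound the mismatched terms by the matched ones) when you write ``summed over sign choices.''
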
 

Next, we consider the last integral term 
\begin{equation}\label{est-ediff}
\sum_\pm\int_0^t \int_\mathbb{R}\Big(\mathcal{E}^\pm(x,t-s) - \mathcal{E}^\pm(x,t)\Big)   \langle \Psi^\pm(y),  \mathcal{N}_2(\tilde R,\tilde \phi_y, \delta^\pm )(y,s)  \rangle   \rmd y \rmd s
\end{equation}
in \eqref{int-form-V}. Due to the bounds \eqref{loc-nonbound} and \eqref{fin-bdelta}, we get 
$$|  \langle \Psi^\pm(y),  \mathcal{N}_2(\tilde R,\tilde \phi_y, \delta^\pm )(y,s)  \rangle| \le  C\rme^{-\frac{\eta_0}{2}|y| } \rme^{-\eta s}  \Big[ \epsilon + h(s)^2\Big] .$$
Also, due to \eqref{app-Epm} and the fact that $\mathcal{B}(x,t) =  e(x,t+1) $ and $\mathcal{B}_x(x,t) = \cO(\theta(x,t))$, we have 
\begin{equation}\label{bound-dE}\begin{aligned}
\Big| &\sum_\pm\int_0^t \int_\mathbb{R}\Big(\mathcal{E}^\pm(x,t-s) - \mathcal{E}^\pm(x,t)\Big)   \langle \Psi^\pm(y),  \mathcal{N}_2(\tilde R,\tilde \phi_y, \delta^\pm )(y,s)  \rangle   \rmd y \rmd s \Big|
\\& \le C \Big(\epsilon +h(t)^2\Big) \int_0^t  |e(x,t-s+1) - e(x,t+1)| \rme^{-\eta s}   \rmd s + C \Big(\epsilon+h(t)^2\Big) \theta(x,t).
\end{aligned}\end{equation}
Again note that the $R$-component of $\mathcal{E}^\pm(x,t)$ is bounded by 
$$ C |\mathcal{B}_x(x,t)|  + C \rme^{-\eta|x|} |\mathcal{B}(x,t)|.$$
The estimate \eqref{bound-dE} is thus improved by either $(1+t)^{-1/2}$ or $\rme^{-\eta |x|}$ when projected on the $R$-component. Here recall again that $\rme^{-\eta |x|} \theta(x,t)$ is in fact decaying exponentially in time and space. Thus it suffices to show that the integral on the right hand side of \eqref{bound-dE} is bounded by $C\theta(x,t)$.  We shall prove the following lemma in the appendix. 

\begin{Lemma}\label{lem-est-ediff}
For each sufficiently large $M$, there is a constant $C$ so that
\begin{eqnarray*}
&& \int_0^t \Big|\partial_x^k\Big[e(x,t-s+1) - e(x,t+1)\Big]\Big|\rme^{-\eta s} \rmd s \quad \leq \quad C(1+t)^{-k/2}\theta(x,t), 
\end{eqnarray*}
for $k=0,1,2,3$. \end{Lemma}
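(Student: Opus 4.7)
The plan is to reduce the estimate to a Gaussian convolution-type bound, analogous in spirit to Lemma~\ref{lem-conv-est01}, via an integration argument in the time variable.

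First, by the fundamental theorem of calculus,
$$e(x,t-s+1)-e(x,t+1) \;=\; -\int_0^s \partial_u e(x,t-\tau+1)\,\rmd\tau,$$
where $\partial_u$ denotes differentiation in the second argument. Differentiating the explicit errfn formula for $e$, and repeatedly using the standard Young-type absorption $|y|^n \rme^{-y^2/(4\mathrm{d}u)} \leq C u^{n/2}\rme^{-y^2/(Mu)}$ to trade polynomial prefactors for slightly wider Gaussians, one establishes the pointwise bound
$$|\partial_x^k \partial_u e(x,u)| \;\leq\; C(1+u)^{-k/2}\theta(x,u), \qquad k=0,1,2,3,\ u\geq 1,$$
valid once $M_0$ in the definition of $\theta$ is taken sufficiently large. (The heat-type identity $\partial_u e = \mathrm{d}\partial_x^2 e + c_\mathrm{g}(u_x+v_x)$, where $u,v$ denote the two errfns, shows why the $t$- and $x$-derivatives are essentially interchangeable up to the factor $(1+u)^{-k/2}$.)

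Next, applying Fubini, dominating $\int_\tau^t \rme^{-\eta s}\,\rmd s \leq \eta^{-1}\rme^{-\eta \tau}$, and substituting $\rho = t-\tau+1$, the left-hand side of the lemma is bounded by
$$C\int_1^{t+1}\rho^{-k/2}\theta(x,\rho)\rme^{-\eta(t+1-\rho)}\,\rmd\rho.$$
The heart of the proof is then the convolution estimate $\int_1^{t+1}\rho^{-k/2}\theta(x,\rho)\rme^{-\eta(t+1-\rho)}\,\rmd\rho \leq C(1+t)^{-k/2}\theta(x,t)$, which I would prove by splitting at $\rho=(t+1)/2$. On the right piece $\rho\in[(t+1)/2,t+1]$, one pulls out $\rho^{-k/2}\leq C(1+t)^{-k/2}$; writing $v=t+1-\rho$ and $w = x\mp c_\mathrm{g}(t+1)$, the inequality $(w\pm c_\mathrm{g}v)^2 \geq w^2/2 - c_\mathrm{g}^2 v^2$ combined with $\rho \geq (t+1)/2$ yields
$$\rme^{-(w\pm c_\mathrm{g}v)^2/(M_0\rho)} \;\leq\; \rme^{-w^2/(2M_0(t+1))}\,\rme^{2c_\mathrm{g}^2 v^2/(M_0(t+1))}.$$
Since $v^2/(t+1)\leq v/2$ for $v\leq (t+1)/2$, choosing $M_0$ large enough that $c_\mathrm{g}^2/M_0 \leq \eta/4$ makes the factor $\rme^{2c_\mathrm{g}^2 v^2/(M_0(t+1))-\eta v}$ bounded by $\rme^{-\eta v/2}$, producing an integrable weight; and comparing $\rme^{-w^2/(2M_0(t+1))}$ with the appropriate Gaussian in $\theta(x,t)$ via $(x\mp c_\mathrm{g}t)^2 \leq 2w^2 + 2c_\mathrm{g}^2$ closes the right piece after possibly further enlarging $M_0$. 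On the left piece $\rho\in[1,(t+1)/2]$, the weight $\rme^{-\eta(t+1-\rho)}\leq \rme^{-\eta(t+1)/2}$ is super-polynomially small in $t$ and dominates the polynomial bound on $\int_1^{(t+1)/2}\rho^{-k/2}\theta(x,\rho)\,\rmd\rho$, with the remainder absorbed into $C(1+t)^{-k/2}\theta(x,t)$ by a routine case-check based on the size of $|x|$ relative to $c_\mathrm{g}t$.

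The main technical obstacle is the Gaussian-shift comparison in the right-piece estimate: the centers $\pm c_\mathrm{g}\rho$ and $\pm c_\mathrm{g}t$ of $\theta(x,\rho)$ and $\theta(x,t)$ can differ by as much as $c_\mathrm{g}t/2$, far exceeding the natural width $\sqrt{M_0 t}$ for any fixed $M_0$. This is precisely why the lemma requires $M_0$ sufficiently large: enlarging $M_0$ simultaneously widens the target Gaussian and shrinks the quadratic drift term $\rme^{2c_\mathrm{g}^2 v^2/(M_0(t+1))}$, allowing the exponential weight $\rme^{-\eta v}$ generated by the integration in $\tau$ to fully absorb the discrepancy.
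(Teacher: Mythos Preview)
Your proposal is correct and follows essentially the same strategy as the paper: write the difference $e(x,t-s+1)-e(x,t+1)$ as a time integral of $e_\tau$, bound $e_\tau$ by moving Gaussians, and then compare with $\theta(x,t)$. The main technical difference is in the Gaussian comparison step. The paper completes the square in $x$ to obtain the clean pointwise bound
\[
\rme^{\frac{(x-c_\mathrm{g}t)^2}{M(1+t)}}\,\rme^{-\frac{(x-c_\mathrm{g}\tau)^2}{B\tau}} \;\leq\; C\,\rme^{\frac{c_\mathrm{g}^2(t-\tau)}{M}},
\]
valid for any fixed $B$ and $M$ large, which immediately gives $\theta(x,t)^{-1}|e(x,t-s+1)-e(x,t+1)|\leq C(1+t)^{1/2}(1+t-s)^{-1/2}\rme^{c_\mathrm{g}^2 s/M}$; the remaining $s$-integral is then a one-line split at $s=t/2$. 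Your route---Fubini first, then the elementary inequality $(w\pm c_\mathrm{g}v)^2\geq w^2/2-c_\mathrm{g}^2v^2$---works, but forces the separate ``left piece'' case analysis in $|x|$ that the paper avoids. One small point to tighten: when you bound $|\partial_u e(x,u)|$ by $C\theta(x,u)$ with the \emph{same} $M_0$ as the target, the subsequent comparison loses a factor of~$4$ in the Gaussian width; to close cleanly you should bound $|\partial_u e|$ by Gaussians of fixed width (say $8\mathrm{d}$) as the paper does, so that enlarging $M_0$ in the target is not circular.
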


In summary, collecting all these estimates into \eqref{int-form-V}, we have obtained the desired estimate \eqref{est-key-V} for $k=0$. The estimates for the derivatives follow exactly the same way as done above with a gain of time decay; we omit the proof. 

%%%%%

\subsection{Estimates for $h_2(t)$}\label{sec-esth2}

In this section we prove the claimed estimate for $h_2(t)$, stated in Proposition~\ref{P:claim-zeta}. The estimates \eqref{est-key-V} almost prove the claimed inequality, except the estimate near the core $x=0$ at which $r(0) = 0$. By Lemma~\ref{lem-NBexistence}, we can assume without loss of generality that there exist positive constants $a,b_{1,2}$ so that 
\[
|r(x) |\ge a,\qquad \forall ~ |x|\ge 1,
\]
and 
\begin{equation}\label{x-ncore}b_1|x|\le  |r(x)|\le b_2|x| ,\qquad |r_x(x)|\ge b_1, \qquad |r_{xx}|\le b_2 |r(x)| ,\forall ~ |x|\le 1.\end{equation}

\bigskip

{\bf Away from the core $|x|\ge 1$.}  In this case, the second estimate in \eqref{est-key-V}, \eqref{est-key-derV}, and the fact that $|r_x| + |r_{xx}| = \cO(\rme^{-\eta |x|})$ imply
$$\begin{aligned}
|\tilde \phi (x,t)|  \quad &\leq \quad Ca^{-1} \Big(\epsilon + h(t)^2\Big)\theta(x,t)
\end{aligned}$$
and 
$$\begin{aligned}
|\tilde \phi_x (x,t)| + |\tilde \phi_{xx} (x,t)|   \quad &\leq \quad Ca^{-1} \Big(\epsilon + h(t)^2\Big)\Big( \rme^{-\eta |x|}+ (1+t)^{-1/2} \Big ) \theta(x,t).
\end{aligned}$$
Note that $\rme^{-\eta |x|} \theta(x,t)$ can be bounded by $C \rme^{-\eta(|x| + t)}$, which may be neglected. In addition, from \eqref{tUsystem} we can write $r\tilde \phi_t$ in terms of $(\tilde R,\tilde \phi)$ and their spatial derivatives. Thus, $r\tilde \phi_t$ is bounded by $C(\epsilon + h(t)^2)(1+t)^{-1/2}\theta(x,t)$, where the extra $(1+t)^{-1/2}$ is due precisely to the fact that the right hand side of \eqref{tUsystem} does not contain $\tilde \phi$ (the term with the slowest decay in the equation). Therefore, the claimed estimate on $\tilde \phi_t$ follows when $|x|\ge 1$. 

\bigskip

{\bf Near the core $|x|\le 1$.} The second estimate in \eqref{est-key-derV} with $k=2$ gives
$$ |(r\tilde \phi)_{xx}| \le C \Big(\epsilon + h(t)^2\Big) \theta(x,t).$$
Thus, if we write 
$$ (r^2 \tilde \phi_x) _x =  r(2  r_x \tilde \phi_x + r \tilde \phi_{xx}) = r ((r\tilde \phi)_{xx} - r_{xx} \tilde \phi) ,$$
by integration together with \eqref{x-ncore} we have 
$$|r^2\tilde \phi_x| =\Big| \int_0^x r(y) (g(y,t) - r_{yy} \tilde \phi) \; dy \Big| \le C \int_0^x |y| \Big(  |(r\tilde \phi)_{yy}| + |r\tilde \phi| \Big ) \; dy \le C x^2 \Big(\epsilon +  h(t)^2\Big) \rme^{-\eta t} .$$
Here we note that $r\tilde \phi(x,t)$ is finite, and so $r^2 \tilde \phi$ vanishes at $x=0$ since $r(0)=0$. Again by the estimate $|r(x)| \ge b_1 |x|$ from \eqref{x-ncore}, the above estimate yields 
\begin{equation}\label{phi-estnear0} |\tilde \phi_x| \le C b_1^{-2} \Big(\epsilon + h(t)^2\Big) \rme^{-\eta t} .\end{equation}
In addition, if we write $r_x \tilde \phi = (r\tilde \phi)_x - r\tilde \phi_x$ and use the above estimate together with \eqref{est-key-V}, we obtain the claimed estimate for $\tilde \phi$ at once thanks to the assumption that $|r_x|\ge b_1>0$. 

Similarly, let us check the claimed estimate for $\tilde \phi_t$. As above, we write 
$$ (r^2 \tilde \phi_{xt})_x =   r \Big ((r\tilde \phi)_{xxt} - \frac{r_{xx}}{r} (r\tilde \phi)_t\Big)$$
and note that, by \eqref{est-key-derV}, $(r\tilde \phi)_{xxt}$ and $(r\tilde \phi)_t$ are already bounded by $C(\epsilon + h(t)^2) \rme^{-\eta t} $. It thus follows similarly to \eqref{phi-estnear0} that 
$$ |\tilde \phi_{xt}| \le C b_1^{-2} \Big(\epsilon + h(t)^2\Big) \rme^{-\eta t} .$$
This yields the desired estimate for $\tilde \phi_t$ near the core by writing $r_x \tilde \phi_t = (r\tilde \phi)_{xt} - r \tilde \phi_{xt}$ and using \eqref{est-key-derV}. 

Finally, we turn to the claimed estimate for $\tilde \phi_{xx}$. First notice that we also have an estimate for $r\tilde \phi _{xx}$ for all $x$ by writing 
\begin{equation}\label{rphixx}r\tilde \phi _{xx} = (r\tilde \phi )_{xx} - 2r_x\tilde \phi _x - r_{xx}\tilde \phi .\end{equation}
Now to estimate $\tilde \phi_{xx}$ for $x$ near zero, we can write 
$$ (r^3 \tilde \phi _{xx})_x = r^2 \Big[ (r\tilde \phi )_{xxx} - 3 r_{xx}\tilde \phi _{x} - r_{xxx} \tilde \phi \Big] $$ and integrate the identity from $0$ to $x$. By \eqref{rphixx}, $r\tilde \phi_{xx} $ is finite at $x=0$ and so $r^3 \tilde \phi_{xx}$ vanishes at $x=0$. Using the estimates \eqref{est-key-derV} with $k=3$ and the estimates on $\tilde \phi_{x}$ and on $\tilde \phi$, we thus obtain 
$$|r^3\tilde \phi_{xx}|  \le C \int_0^x |y|^2 \Big[ (r\tilde \phi )_{yyy} - 3 r_{yy}\tilde \phi _{y} - r_{yyy} \tilde \phi \Big)\; dy \le C |x|^3 \Big(\epsilon  + h(t)^2\Big) \rme^{-\eta t} .$$
Again, since $|r(x)|\ge b_1|x|$, we then obtain 
$$ |\tilde \phi_{xx}|  \le C b_1^{-3} \Big(\epsilon +  h(t)^2\Big) \rme^{-\eta t} ,$$
for $|x|\le 1$. 

This completes the proof of the claimed estimate $h_2(t)$. The key proposition (Proposition~\ref{P:claim-zeta}) is  therefore proved, and so is the main theorem.  

%%%%%%%

\appendix 

\section{Convolution estimates}
In this section, we prove the convolution estimates that we used in the previous sections. These estimates can also be found in \cite{BNSZ}. 

\begin{proof}[\textbf{Proof of Lemma~\ref{lem-conv-est01}}] Let us recall that 
\[
|\widetilde{ \mathcal{G} }(x,y,t)| \quad\leq \quad C t^{-1/2} \left( \rme^{-\frac{(x-y+c_\mathrm{g}t)^2}{4t}} + \rme^{-\frac{(x-y-c_\mathrm{g}t)^2}{4t}} \right),
\]
and 
\begin{equation}\label{Green-Rest-app}
|\widetilde{ \mathcal{G}  }_R(x,y,t)| \quad\leq \quad C t^{-1/2} (t^{-1/2} + e^{-\eta |y|}) \left( \rme^{-\frac{(x-y+c_\mathrm{g}t)^2}{4t}} + \rme^{-\frac{(x-y-c_\mathrm{g}t)^2}{4t}} \right).
\end{equation}

We will show that \begin{equation}\label{app-Gest01}
\begin{aligned}
& \int_0^t \int_{\mathbb{R}} \widetilde{ \mathcal{G}  }(x,y,t-s) (1+s)^{-1+\kappa} \theta(y,s) \; \rmd y\rmd s   \quad \leq \quad C(1+t)^\kappa\theta(x,t)\\
& \int_0^t \int_{\mathbb{R}}  \widetilde{ \mathcal{G}  }_R(x,y,t-s)  (1+s)^{-1+\kappa} \theta(y,s)    \; \rmd y\rmd s   \quad \leq \quad C(1+t)^{-1/2+\kappa}\theta(x,t).
\end{aligned}\end{equation}

Let us start with a proof of the first estimate in \eqref{app-Gest01}. We first note that there are constants $C_1,\tilde{C}_1>0$ such that
\[
\tilde{C}_1 \rme^{-y^2/M} \quad\leq \quad |\theta(y,s)| \quad\leq \quad C_1 \rme^{-y^2/M}
\]
for all $0\leq s\leq1$. Thus, for some constant $C_1$ that may change from line to line, we have
\begin{eqnarray*}
\lefteqn{\int_0^t \int_\mathbb{R} |\widetilde{ \mathcal{G}  }(x,y,t-s) (1+s)^{-1+\kappa} \theta(y,s)  (y,s) \rmd y \rmd s}
\\ & \leq &
C_1 \int_0^t \int_\mathbb{R} (t-s)^{-1/2} \rme^{-\frac{(x-y)^2}{4(t-s)}} \rme^{-y^2/M} \rmd y\rmd s
\\ & \leq &
C_1 \int_0^t \left[\int_{\{|y|\ge 2|x|\}} (t-s)^{-1/2} \rme^{-\frac{(x-y)^2}{8(t-s)}} \rme^{-\frac{x^2}{8(t-s)}} \rmd y+ \int_{\{|y|\le 2|x|\}} (t-s)^{-1/2} \rme^{-\frac{(x-y)^2}{4(t-s)}} \rme^{-\frac{4x^2}{M}} \rmd y\right]\rmd s
\\ & \leq &
C_1 \int_0^t \Big[ \rme^{-\frac{x^2}{8(t-s)}}+  \rme^{-\frac{4x^2}{M}} \Big]\rmd s
\\ & \leq &
C_1  \rme^{-\frac{4x^2}{M}} \quad \leq \quad \frac{C_1}{\tilde{C}_1} \theta(x,t)
\end{eqnarray*}
for all $0\leq t\leq1$. 
Next, we write the first estimate in \eqref{app-Gest01} as
\[
\theta(x,t)^{-1} \int_0^t \int_\mathbb{R} |\widetilde{ \mathcal{G}  }(x,y,t-s)| (1+s)^{-1+\kappa} \theta(y,s)  \rmd y \rmd s
\]
for $t\geq1$. Combining only the exponentials in this expression, we obtain terms that can be bounded by
\begin{equation}\label{exp-form}
\exp\left(\frac{(x+\alpha_3 t)^2}{M(1+t)} - \frac{(x-y+\alpha_1 (t-s))^2}{4(t-s)} - \frac{(y+\alpha_2 s)^2}{M(1+s)} \right)
\end{equation}
with $\alpha_j=\pm c_\mathrm{g}$. To estimate this  expression, we proceed as in \cite[Proof of Lemma~7]{HowardZumbrun06} and complete the square of the last two exponents in (\ref{exp-form}). Written in a slightly more general form, we obtain
\begin{eqnarray*}
\lefteqn{\frac{(x-y-\alpha_1(t-s))^2}{M_1(t-s)} + \frac{(y-\alpha_2 s)^2}{M_2(1+s)} \;=\;
\frac{(x-\alpha_1(t-s)-\alpha_2 s)^2}{M_1(t-s)+M_2(1+s)} } \\ &&
+ \frac{M_1(t-s)+M_2(1+s)}{M_1M_2(1+s)(t-s)}\left( y - \frac{xM_2(1+s) - (\alpha_1M_2(1+s) + \alpha_2M_1s)(t-s)}{M_1(t-s)+M_2(1+s)}\right)^2
\end{eqnarray*}
and conclude that the exponent in (\ref{exp-form}) is of the form
\begin{eqnarray}\label{exp-form-2}
\lefteqn{ \frac{(x+\alpha_3t)^2}{M(1+t)} - \frac{(x-\alpha_1(t-s)-\alpha_2s)^2}{4(t-s)+M(1+s)} } \\ \nonumber &&
- \frac{4(t-s)+M(1+s)}{4M(1+s)(t-s)}\left( y - \frac{xM(1+s) - (\alpha_1M(1+s) + 4\alpha_2s)(t-s)}{4(t-s)+M(1+s)}\right)^2,
\end{eqnarray}
with $\alpha_j=\pm c_\mathrm{g}$. Using that the maximum of the quadratic polynomial $\alpha x^2+\beta x+\gamma$ is $-\beta^2/(4\alpha)+\gamma$, it is easy to see that the sum of the first two terms in (\ref{exp-form-2}), which involve only $x$ and not $y$, is less than or equal to zero. Omitting this term, we therefore obtain the estimate
\begin{eqnarray}\label{est-on-exp}
\lefteqn{ \exp\left( \frac{(x\pm c_\mathrm{g}t)^2}{M(1+t)} - \frac{(x-y\delta_1c_\mathrm{g}(t-s))^2}{4(t-s)} - \frac{(y-\delta_2c_\mathrm{g}s)^2}{M(1+s)} \right) } \\ \nonumber & \leq &
\exp \left( - \frac{4(t-s)+Ms}{4M(1+s)(t-s)}\left( y - \frac{xM(1+s)+c_\mathrm{g}(\delta_1M(1+s)+ 4\delta_2s)(t-s)}{4(t-s)+M(1+s)}\right)^2 \right)
\end{eqnarray}
for $\delta_j=\pm1$. Using this result, we can now estimate the integral (\ref{app-Gest01}). Indeed, we have
\begin{eqnarray*}
\lefteqn{ \theta(x,t)^{-1} \int_0^t \int_\mathbb{R}   |\widetilde{ G }(x,y,t-s)|(1+s)^{-1+\kappa}\theta(y,s) \rmd y \rmd s } \\ & \leq &
C_1 (1+t)^{1/2}\int_0^t \frac{1}{\sqrt{t-s}(1+s)^{3/2-\kappa}} \\ && \times 
\int_\R \exp\left(- \frac{4(t-s)+M(1+s)}{4M(1+s)(t-s)}\left( y - \frac{[xM(1+s) \pm c_\mathrm{g}(M(1+s) + 4s)(t-s)]}{4(t-s)+M(1+s)}\right)^2 \right) \rmd y\rmd s \\ & \leq &
C_1(1+t)^{1/2}\int_0^t \frac{1}{\sqrt{t-s}(1+s)^{3/2 - \kappa}}  \sqrt{\frac{4M(1+s)(t-s)}{4(t-s)+M(1+s)}} \rmd s \\ & \leq &
C_1(1+t)^{1/2}\int_0^{t/2} \frac{1}{(1+s)^{1 - \kappa}}\frac{1}{(1+t)^{1/2}} \rmd s + C_1(1+t)^{1/2-\kappa}\int_{t/2}^t \frac{1}{(1+t)^{3/2-\kappa}}\rmd s \\ & \leq &
C_1(1+t)^{-\kappa} + C_1,
 \end{eqnarray*}
which is bounded since $\kappa>0$. 
This proves the first estimate in \eqref{app-Gest01}. The second estimate is entirely the same, using the refined estimate \eqref{Green-Rest-app} for $ \widetilde{ \mathcal{G}  }_R$. Also, derivative estimates follow very similarly. We omit these further details.

Finally, it remains to show that 
\begin{equation}\label{app-est-co}
\begin{aligned}&   \int_0^t \int_{\{|y|\le 1\}}  \widetilde{ \mathcal{G}  }(x,y,t-s) \rme^{-\frac{2c_\mathrm{g}^2}{M}s} \; \rmd y\rmd s  \quad \leq \quad C\theta(x,t),
\end{aligned}\end{equation}
where the Green function bounds read 
$$|\widetilde{ \mathcal{G}  }(x,y,t)| \quad \le \quad  C t^{-1/2} \left( \rme^{-\frac{(x+c_\mathrm{g}t)^2}{4t}} + \rme^{-\frac{(x-c_\mathrm{g}t)^2}{4t}} \right) ,$$
for $|y|\le 1$. The estimate \eqref{app-est-co} is clear when $0\le t\le 1$. Let us consider the case $t\ge 1$. The proof of this estimate uses the following bound: $$\rme^{\frac{(x-c_\mathrm{g}t)^2}{M(1+t)}}\rme^{-\frac{(x-c_\mathrm{g}\tau)^2}{B\tau}} \quad\leq \quad C \rme^{\frac{c_\mathrm{g}^2(t-\tau)}{M}},$$
for fixed constant $B$ and for large $M$. This is a simpler version of \eqref{est-on-exp}; see also \eqref{est-2exp}, below. We thus have 
\begin{eqnarray*}
\lefteqn{\theta(x,t)^{-1}\int_0^t (t-s)^{-1/2}\rme^{-\frac{(x+c_\mathrm{g}(t-s))^2}{4(t-s)}}  \rme^{-\frac{2c_\mathrm{g}^2}{M}s} \rmd s} \\ & \leq &
C(1+t)^{1/2} \int_0^t  (t-s)^{-1/2}\rme^{\frac{c_\mathrm{g}^2s}{M}} \rme^{-\frac{2c_\mathrm{g}^2}{M}s}\rmd s \\ & \leq &
C(1+t)^{1/2} \Big[t^{-1/2}\int_0^{t/2}  \rme^{-\eta s}\rmd s +  \rme^{-\frac{c_\mathrm{g}^2}{2M}t} \int_{t/2}^t  (t-s)^{-1/2}\rmd s\Big] \\ & \leq &
C (1+t)^{1/2} t^{-1/2} + C(1+t)\rme^{-\frac{c_\mathrm{g}^2}{2M}t} ,
\end{eqnarray*}
which is bounded for $t\ge 1$. This proves the estimate \eqref{app-est-co}, and completes the proof of Lemma 
\ref{lem-conv-est01}. \end{proof}

\begin{proof}[\textbf{Proof of Lemma~\ref{lem-est-ediff}}]
We need to show that
\begin{eqnarray*}
&& \int_0^t \Big|\Big[e(x,t-s+1) - e(x,t+1)\Big]\Big|\rme^{-\frac{2c_\mathrm{g}^2}{M}s} \rmd s \quad \leq \quad C_1\theta(x,t), 
\end{eqnarray*}
Intuitively, this integral should be small for the following reason. The difference $e(x,t-s)-e(x,t+1)$ converges to zero as long as $s$ is not too large, say on the interval $s\in[0,t/2]$. For $s\in[t/2,t]$, on the other hand, we use the exponential decay in $s$. Indeed, we have
\begin{eqnarray*}
\lefteqn{|e(x,t-s+1) - e(x,t+1)|} \\ & = & |\int^{t+1}_{t-s+1} e_\tau(x,\tau)\rmd \tau|
\\ & \leq &
\int_{t-s+1}^{t+1} \left| \frac{c}{\sqrt{4\pi \tau}} \left( \rme^{-\frac{(x-c_\mathrm{g}\tau)^2}{4\tau}} + \rme^{-\frac{(x+c_\mathrm{g}\tau)^2}{4\tau}}\right) + \frac{1}{\tau\sqrt{4\pi}} \left( \frac{(x-c_\mathrm{g}\tau)}{\sqrt{4\tau}}\rme^{-\frac{(x-c_\mathrm{g}\tau)^2}{4\tau}} - \frac{(x+c_\mathrm{g}\tau)}{\sqrt{4\tau}}\rme^{-\frac{(x+c_\mathrm{g}\tau)^2}{4\tau}}\right)   \right|\rmd \tau
\\ & \leq &
C\int_{t-s+1}^{t+1} \left( \frac{1}{\sqrt{\tau}} +  \frac{1}{\tau}  \right)  \left( \rme^{-\frac{(x-c_\mathrm{g}\tau)^2}{8\tau}} + \rme^{-\frac{(x+c_\mathrm{g}\tau)^2}{8\tau}}\right) \rmd \tau,
\end{eqnarray*}
where the last estimate follows by the fact that $z\rme^{-z^2} $ is bounded for all $z$.

We shall give estimate for $\theta^{-1}(x,t)(e(x,t-s+1) - e(x,t+1))$.  For instance, let us consider the single exponential term
\[
\rme^{\frac{(x-c_\mathrm{g}t)^2}{M(1+t)}}\rme^{-\frac{(x-c_\mathrm{g}\tau)^2}{B\tau}}.
\]
By combining these and completing the square in $x$, the terms in the exponential become
\[
-\frac{[M(t-\tau+1) + (M-B)\tau]}{MB(t+1)\tau}\left[ x + \frac{c_\mathrm{g}(B-M)\tau(t+1)}{M(t-\tau) + (M-B)\tau}\right]^2 + \frac{c_\mathrm{g}^2(t-\tau+1)^2}{M(t-\tau+1) + (M-B)\tau}.
\]
Since $\tau \leq t$, if $B$ is some fixed constant and 
$M$ is sufficiently large we can neglect the exponential in $x$. That is, we have
\begin{equation}\label{est-2exp}\rme^{\frac{(x-c_\mathrm{g}t)^2}{M(1+t)}}\rme^{-\frac{(x-c_\mathrm{g}\tau)^2}{B\tau}} \quad\leq \quad C \rme^{\frac{c_\mathrm{g}^2(t-\tau)}{M}}.\end{equation}
We therefore obtain 
\begin{eqnarray*}
\theta^{-1}(x,t) |e(x,t-s+1) - e(x,t+1)| & \leq &
C (1+t)^{1/2}\int_{t-s+1}^{t+1} \left( \frac{1}{\sqrt{\tau}} +  \frac{1}{\tau}  \right) \rme^{\frac{c_\mathrm{g}^2(t-\tau)}{M}} \rmd \tau \\ \nonumber & \leq & 
C (1+t)^{1/2} (1+t-s)^{-1/2}\rme^{\frac{c_\mathrm{g}^2s}{M}},
\end{eqnarray*}
Using this and taking $M$ large and $\eta = c_\mathrm{g}^2/M$, we obtain 
\begin{eqnarray*}
\lefteqn{\theta(x,t)^{-1}\int_0^t [e(x,t-s+1) - e(x,t+1)] \rme^{-\frac{2c_\mathrm{g}^2}{M}s} \rmd s} \\ & \leq &
C(1+t)^{1/2} \int_0^t  (1+t-s)^{-1/2}\rme^{\frac{c_\mathrm{g}^2s}{M}} \rme^{-2\eta s}\rmd s \\ & \leq &
C(1+t)^{1/2} \Big[(1+t)^{-1/2}\int_0^{t/2}  \rme^{-\eta s}\rmd s + \rme^{-\eta t/2}\int_{t/2}^t  (1+t-s)^{-1/2}\rmd s\Big] \\ & \leq &
C.
\end{eqnarray*}
This proves the lemma for the case $k=0$. The derivative estimates follow easily from the above proof. \end{proof}

\bibliography{bib-cgl-sources}

\end{document}